\title{Mori dream surfaces associated with curves with one place at infinity}
\author{Pinaki Mondal}
\address{Weizmann Institute of Science}
\email{pinaki@math.toronto.edu}
\DeclareMathOperator{\lc}{Lc}
\newcommand{\Rmnum}[1]{\expandafter\@slowromancap\romannumeral #1@}
\DeclareMathOperator\cl{Cl}
\DeclareMathOperator\ord{ord} 
\DeclareMathOperator\Pic{Pic}
\DeclareMathOperator\supp{Supp}
\newcommand{\scrB}{\ensuremath{\mathcal{B}}}
\newcommand{\scrC}{\ensuremath{\mathcal{C}}}
\newcommand{\scrE}{\ensuremath{\mathcal{E}}}
\newcommand{\scrG}{\ensuremath{\mathcal{G}}}
\newcommand{\scrL}{\ensuremath{\mathcal{L}}}
\newcommand{\scrN}{\ensuremath{\mathcal{N}}}
\newcommand{\scrR}{\ensuremath{\mathcal{R}}}
\newcommand{\scrV}{\ensuremath{\mathcal{V}}}
\newcommand{\cc}{\ensuremath{\mathbb{C}}}
\newcommand{\pp}{\ensuremath{\mathbb{P}}}
\newcommand{\qq}{\ensuremath{\mathbb{Q}}}
\newcommand{\zz}{\ensuremath{\mathbb{Z}}}
\newcommand{\sheaf}{\ensuremath{\mathcal{O}}}
\newcommand{\im}{\ensuremath{\Rightarrow}}
\newcommand{\dsum}{\ensuremath{\bigoplus}}
\newcommand{\into}{\ensuremath{\hookrightarrow}}
\newtheorem{thm}{Theorem}[section]
\newtheorem*{thm*}{Theorem}
\newtheorem{lemma}[thm]{Lemma}
\newtheorem*{lemma*}{Lemma}
\newtheorem{prop}[thm]{Proposition}
\newtheorem*{prop*}{Proposition}
\newtheorem{cor}[thm]{Corollary}
\newtheorem*{claim*}{Claim}
\newtheorem{proclaim}{Claim}[thm]
\newtheorem*{conjecture*}{Conjecture}
\theoremstyle{definition}
\newtheorem*{constrinition*}{Construction-Definition}
\newtheorem{convention}[thm]{Convention}
\newtheorem*{convention*}{Convention}
\newtheorem{defn}[thm]{Definition}
\newtheorem*{defn*}{Definition}
\newtheorem*{definotation*}{Definition-Notation}
\newtheorem{example}[thm]{Example}
\newtheorem*{example*}{Example}
\newtheorem*{fact*}{Fact}
\newtheorem*{facts*}{Facts}
\newtheorem{notation}[thm]{Notation}
\newtheorem*{bold-note*}{Note}
\newtheorem{bold-question}[thm]{Question}
\newtheorem*{bold-question*}{Question}
\newtheorem{rem}[thm]{Remark}
\newtheorem{reminition}[thm]{Remark-Definition}
\newtheorem*{reminition*}{Remark-Definition}
\newtheorem{remexample*}{Remark-Example}
\newtheorem*{remtation*}{Remark-Notation}
\newtheorem*{remuestion*}{Remark-Question}
\theoremstyle{remark}
\newtheorem*{rem*}{Remark}
\newtheorem*{note*}{Note}
\newtheorem*{notation*}{Notation}
\newtheorem*{question*}{Question}
\newtheorem*{questions*}{Questions}
\newcounter{UnorderedProofTempCtr}
\newcommand{\tempcommand}{}
\newcommand{\lot}{\text{l.o.t.}}
\newcommand{\ldt}{\text{l.d.t.}}
\newcommand{\dpsx}[2]{{#1} \langle \langle #2 \rangle \rangle }
\newcommand{\dpsxc}{\dpsx{\cc}{x}}
\newcommand{\vecdelta}[1]{(#1)_{\vec \delta(#1)}}
\newcommand{\gabgamma}{g_{\alpha\beta\vec\gamma}}
\newcommand{\Span}[1]{\langle #1 \rangle}
\newcommand{\surface}{\hyperref[polynation]{\ensuremath{S_{pol}}}}
\newcommand{\surfaceplus}{\hyperref[polynation]{\ensuremath{S^+_{pol}}}}
\newcommand{\surfacenum}{\hyperref[curvettinition]{\ensuremath{S_{num}}}}
\newcommand{\mytype}{semi-affine}
\newcommand{\semidegree}{\hyperref[semisection]{semidegree}}
\newcommand{\ed}{\scrE_{\vec d}}
\newcommand{\pst}{P^{st}}
\newcommand{\pstx}{P^{st}(X)}
\newcommand{\Gammad}{\Gamma_{\vec d}}
\begin{document}

\begin{abstract} 
We study a class of rational surfaces (considered in \cite{campillo-piltant-lopez-cones}) associated to curves with one place at infinity and explicitly describe generators of the Cox ring and global sections of line bundles on these surfaces. In particular, we show that their Cox rings are finitely generated, i.e.\ they are Mori dream spaces. We also compute their {\em global Zariski semigroups at infinity} (consisting of line bundles which have no base points `at infinity') and {\em global Enriques semigroups} (generated by closures of curves in $\cc^2$). In particular, we show that the global Zariski semigroups at infinity and Enriques semigroups of surfaces corresponding to pencils which are {\em equisingular at infinity} are isomorphic, which answers a question of \cite{campillo-piltant-lopez-cones-surfaces}. We also give an {\em effective} algorithm to determine if a (rational) surface {\em admits systems of numerical curvettes} (these surfaces were also considered in \cite{campillo-piltant-lopez-cones}).
\end{abstract} 

\maketitle

\section{Introduction}
Pencils of curves on $\pp^2$ form a classical topic in algebraic geometry. A particular class $\scrV_1$ of pencils that appeared in numerous works (see e.g.\ \cite{moh-analytic-irreducibility}, \cite{sathaye}, \cite{bartolo-cassou-nogues}, \cite{suzuki}, \cite{bartolo-cassou-nogues2}, \cite{campillo-piltant-lopez-cones-surfaces}, \cite{galserrat-rational}, \cite{monserrat}) consists of {\em pencils at infinity} corresponding to {\em curves with one place at infinity}, i.e.\ pencils $V := \cc \Span{F,H^d}$\footnote{We write $\cc\langle F_1, \ldots, F_k \rangle$ to denote the vector space over $\cc$ spanned by $F_1, \ldots, F_k$.}, where $H$ is the equation of a straight line $L$ ({\em the line at infinity}), and $F$ is a polynomial of degree $d$ such that the curve $C := \{F = 0\}$ has {\em one place at infinity} (i.e.\ $C$ intersects $L$ only at one point $P$ and the germ of $C$ at $P$ is irreducible). \cite{campillo-piltant-lopez-cones} studied a more general class $\scrV \supseteq \scrV_1$ of pencils where $F$ is of the form $F_1^{a_1} \cdots F_k^{a_k}$, where each $F_j$, $1 \leq j \leq k$, defines a curve with one place at infinity. In this article we continue the study of \cite{campillo-piltant-lopez-cones-surfaces} and \cite{campillo-piltant-lopez-cones} on line bundles on the surfaces $X_V$ resulting from (minimal) resolutions of basepoints of pencils $V \in \scrV$ and show that they are {\em Mori dream spaces}, i.e.\ their Cox rings are finitely generated. Finite generation of Cox rings of surfaces have been extensively studied (see e.g.\ \cite{baty-pov}, \cite{hassett-tschinkel-cox}, \cite{testa-varado}, \cite{arte-sen-face}, \cite{ottem-k32}), but we would like to point out, as it was noted in \cite{campillo-piltant-lopez-cones-surfaces}, that the surfaces $X_V$ are unlike `typical examples' of Mori dream spaces in the sense that in general the anti-canonical bundles on these surfaces are far from being nef, so that the Mori theory does not apply. Cox rings of $X_V$ for a special class of $V \in \scrV$ were in fact shown to be finitely generated in \cite{galserrat-total}; however, our method is (different and) {\em explicit} (in the sense that we explicitly describe the generators of the Cox ring) and this enables us to tackle the sharper problem of characterizing some interesting classes of line bundles on these surfaces: \\

The main problem studied in \cite{campillo-piltant-lopez-cones-surfaces} was to achieve (for surfaces of the form $X_V$, $V \in \scrV_1$) global analogues of the work of Enriques and Zariski, i.e.\ to characterize line bundles on $X_V$ which are generated by global sections (Zariski), and those which have global sections supported on (the strict transform of) curves that properly intersect the line at infinity and the exceptional curves (Enriques). We completely solve this problem and give explicit description of the global Enriques and Zariski semigroups. In particular we answer the question asked in \cite{campillo-piltant-lopez-cones-surfaces} about these semigroups: 

\begin{bold-question}[{\cite{campillo-piltant-lopez-cones-surfaces}}] \label{question}
Let $V_i := \Span{F_i,H^d} \in \scrV_1$, $i = 1,2$. Identify $\pp^2 \setminus \{H=0\}$ with $\cc^2$, and assume $V_1$ and $V_2$ are {\em equisingular at infinity}, i.e.\ the curves $C_i$ defined by $F_i$ (of the same degree $d$) have equisingular germs `at infinity' i.e.\ at the the points $O_i := C_i \cap \{H=0\}$, $1 \leq i \leq 2$. Under what conditions are the global Enriques semigroups and/or global Zariski semigroups on $X_{V_i}$'s isomorphic?
\end{bold-question}
We show that the answer to Question \ref{question} is ``always'' (Corollary \ref{equisingular-corollary}). In fact our results extend to the larger class $\scrV$: if $V_1,V_2 \in \scrV$ are equisingular at infinity, then Corollary \ref{equisingular-corollary} states that
\begin{enumerate}
\item the global Enriques semigroups of $X_{V_1}$ and $X_{V_2}$ are isomorphic.
\item \label{zariski-assertion} the `Zariski semigroups at infinity' (consisting of divisors which have no base point on $X_V\setminus \cc^2$) of $X_{V_1}$ and $X_{V_2}$ are isomorphic.\\
\end{enumerate}

Pick $V = \cc\Span{F_1^{a_1} \cdots F_k^{a_k}, H^d} \in \scrV$. Then $X_V$ can be considered naturally as a compactification of $\cc^2$ (by identifying $\cc^2$ with the complement of $\{H = 0\}$ in $\pp^2$). Set $f_i := F_i|_{\cc^2}$, $1 \leq i \leq k$. As in \cite{campillo-piltant-lopez-cones} and \cite{campillo-piltant-lopez-cones-surfaces}, we show that the divisors $D_{ij}$ corresponding to {\em approximate roots} (introduced by Abhyankar and Moh \cite{abhya-moh-tschirnhausen}) of the $f_i$'s play a crucial role in the structure of line bundles on $X_V$: they essentially generate the Cox ring of $X_V$ (Theorem \ref{cox-thm} and Remark \ref{one-place-f-ij}) and {\em tropically generates} the global Enriques semigroup $\pst(X_V)$:

\begin{defn}
Given $S \subseteq \zz^k$, the {\em tropical closure} of $S$ is the smallest semigroup $\bar S \subseteq \zz^k$ containing $S$ which is also closed under taking (coordinatewise) maximum, i.e.\ if $\alpha_i := (\alpha_{i1}, \ldots, \alpha_{ik}) \in \bar S$, $1 \leq i \leq 2$, then $\max\{\alpha_1, \alpha_2\} := (\max\{\alpha_{11}, \alpha_{21}\}, \ldots, \max\{\alpha_{1k}, \alpha_{2k}\}) \in \bar S$. We say that $S$ {\em tropically generates} $T \subseteq \zz^k$ iff $T = \bar S$.
\end{defn}

\begin{thm}[follows from Corollary \ref{enricorollary} and Remark \ref{one-place-f-ij}]
Let $\Gamma_0, \ldots, \Gamma_N$ be the irreducible components of $X_V \setminus \cc^2$. Consider the corresponding identification of $\Pic X_V$ with $\zz^{N+1}$. Then $\{D_{ij}\}_{i,j}$ tropically generates $\pst(X_V) \subseteq \zz^{N+1}$.
\end{thm}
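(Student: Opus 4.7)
The plan is to prove the two inclusions $\pst(X_V) \subseteq \bar S$ and $\bar S \subseteq \pst(X_V)$, where $S := \{[D_{ij}]\}_{i,j}$ and $\bar S$ denotes its tropical closure. Together they give $\pst(X_V) = \bar S$, which is exactly the claim that $S$ tropically generates $\pst(X_V)$.

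For $\pst(X_V) \subseteq \bar S$: by the definition of $\pst(X_V)$ it suffices to show $[\bar C_f] \in \bar S$ for every nonzero $f \in \cc[x,y]$. By Theorem~\ref{cox-thm} (with Remark~\ref{one-place-f-ij}), the Cox ring of $X_V$ is generated by the canonical sections $s_{ij}$ of $\scrO(D_{ij})$, so $f$, viewed as a section of $\scrO([\bar C_f])$, admits a polynomial expression $f = \sum_\alpha c_\alpha \prod_{i,j} s_{ij}^{\alpha_{ij}}$. The $\Pic(X_V)$-grading forces each appearing monomial to have class exactly $[\bar C_f]$, so $[\bar C_f] = \sum_{i,j}\alpha_{ij}[D_{ij}]$ lies in the $\zz_{\geq 0}$-semigroup generated by $S$, and a fortiori in $\bar S$.

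For $\bar S \subseteq \pst(X_V)$: each $D_{ij}$ is the closure in $X_V$ of the approximate root curve $\{g_{ij} = 0\} \subset \cc^2$, so $S \subseteq \pst(X_V)$. Being a semigroup, $\pst(X_V)$ is automatically sum-closed, so it suffices to show that it is also closed under coordinate-wise maxima. Given $v_1, v_2 \in \pst(X_V)$, use the identity $\sum_\beta[\bar C_{f_\beta}] = [\bar C_{\prod_\beta f_\beta}]$ to represent $v_i = [\bar C_{F_i}]$ for some $F_1, F_2 \in \cc[x,y]$, and consider the family $F_\lambda := F_1 + \lambda F_2$. For each $k \in \{0, \ldots, N\}$ the divisorial valuation $\omega_k := -\nu_k$ at $\Gamma_k$ satisfies $\omega_k(F_\lambda) = \min(\omega_k(F_1), \omega_k(F_2))$ unless the $\omega_k$-leading forms of $F_1$ and $\lambda F_2$ cancel in the associated graded ring---a situation arising only when $\omega_k(F_1) = \omega_k(F_2)$ and the two leading forms are proportional, in which case at most one value of $\lambda$ is bad. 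For $\lambda$ outside the resulting finite union of bad values across the $N+1$ choices of $k$, one obtains $\nu_k(F_\lambda) = \max(\nu_k(F_1), \nu_k(F_2))$ simultaneously for all $k$, and hence $[\bar C_{F_\lambda}] = \max(v_1, v_2)$ lies in $\pst(X_V)$.

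The main delicate point I anticipate is the Cox ring step: I am reading ``essentially generate'' in Theorem~\ref{cox-thm} as meaning that the $\Pic(X_V)$-graded Cox ring is generated as a $\cc$-algebra by the $s_{ij}$'s alone. Should extra generators be required---e.g., sections associated with the boundary components $\Gamma_k$---then each monomial in the expansion of $f$ would contribute a $[\Gamma_k]$-part in addition to the $[D_{ij}]$-part, and one would have to argue further that the $[\Gamma_k]$-contributions either cancel across monomials (exploiting that $\bar C_f$ has no boundary support) or themselves lie in $\bar S$ (using explicit knowledge of how the $\Gamma_k$ arise in the minimal resolution of $V$). The max-closure argument for the reverse inclusion is, by contrast, robust and self-contained.
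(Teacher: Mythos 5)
Your plan of proving the two inclusions $\pst(X_V)\subseteq\bar S$ and $\bar S\subseteq\pst(X_V)$ is natural, and the second inclusion is correct and, pleasantly, self-contained: $S\subseteq\pst(X_V)$ since each $D_{ij}$ is the closure of an affine curve, $\pst(X_V)$ is a semigroup by definition, and the pencil $F_1+\lambda F_2$ argument for max-closure is sound (for each $k$ the bad $\lambda$ form a finite set since $\delta_k(F_1+\lambda F_2)<\max(\delta_k(F_1),\delta_k(F_2))$ forces cancellation of the $\delta_k$-leading terms). This gives a direct argument for the inclusion that the paper instead reads off of Corollary~\ref{enricorollary}, which is a perfectly reasonable alternative.

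The forward inclusion, however, has a genuine gap that you yourself flag, and neither of your two proposed repairs is the right one. Theorem~\ref{cox-thm} explicitly lists the canonical sections $(1)_{\vec e_k}$ of $\sheaf(\Gamma_k)$ among the generators of the Cox ring, and these are unavoidable (already for $\pp^2$ with one boundary line). Consequently a monomial appearing in the Cox-ring expansion of $(f)_{\vec\delta(f)}$ has the form $\prod s_{ij}^{\gamma_{ij}}\cdot\prod(1)_{\vec e_k}^{n_k}$ with $\sum\gamma_{ij}[D_{ij}]+\sum n_k\vec e_k=\vec\delta(f)$ and $n_k\geq 0$, so one only obtains $\sum\gamma_{ij}[D_{ij}]\leq\vec\delta(f)$ coordinatewise, not equality. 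The $[\Gamma_k]$-contributions do not ``cancel across monomials'' (they are all nonnegative), and $\vec e_k$ does not lie in $\bar S$. What you actually need is the observation that the coordinatewise \emph{maximum} over all monomials of the $\sum\gamma_{ij}[D_{ij}]$-parts equals $\vec\delta(f)$: writing $f=\sum_\alpha c_\alpha\,\gabgamma^{(\alpha)}$ as in assertion~\ref{line-bundle-assertion} of Theorem~\ref{cox-thm}, for each $k$ one has $\delta_k(f)\leq\max_\alpha\delta_k\bigl(\gabgamma^{(\alpha)}\bigr)$ because $-\delta_k$ is a valuation, while the reverse inequality holds since every $\gabgamma^{(\alpha)}$ lies in $\ed$ for $\vec d=\vec\delta(f)$. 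Hence $\vec\delta(f)=\max_\alpha\vec\delta\bigl(\gabgamma^{(\alpha)}\bigr)$ is a coordinatewise max of elements of the additive semigroup generated by $\{[D_{ij}]\}$, so it lies in $\bar S$. This is precisely what Corollary~\ref{enricorollary} packages: the condition $d_j=M_{\vec d,j}$ for all $j$ is equivalent to $\vec d$ being a coordinatewise maximum of elements of the semigroup spanned by $\{\vec\delta(f_{ij})\}\cup\{\vec\delta(x),\vec\delta(y)\}$, which (via Remark~\ref{one-place-f-ij}) is the semigroup spanned by $\{[D_{ij}]\}$. With that one-line semidegree inequality inserted, your proof goes through.
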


The global Zariski semigroup at infinity $\tilde P_\infty(X_V)$ can also be expressed in terms of (products of) the approximate roots of the $f_i$'s. The statements however turn out to be a bit more technical, and we refer the reader to Theorem \ref{zariski-thm} for a precise description of $\tilde P_\infty(X)$ for surfaces $X$ in a more general class $\surface$, and to Corollary \ref{one-place-cor} for a version customized for $X_V$ for $V \in \scrV_1$. \\

Our results on Cox ring and global Enriques and Zariski semigroups are valid for a class $\surface$ of surfaces which (strictly) contains all $X_V$, $V \in \scrV$. The class $\surface$ appears naturally in the study of compactifications of $\cc^2$. Indeed, if $X$ is a compactification of $\cc^2$ then to each irreducible curve $\Gamma$ `at infinity' on $X$ (i.e.\ $\Gamma$ is an irreducible component of $X_V \setminus \cc^2$) one can associate a (finite) sequence of elements in $\cc[x,x^{-1},y]$ which describe the order of vanishing of rational functions along $\Gamma$; these are called {\em key forms} associated to $C$. The key forms are global variants of {\em key polynomials} of valuations introduced by MacLane \cite{maclane-key}, and they can be used to determine {\em effectively} various properties of the associated (divisorial) valuation, see e.g.\ \cite{contractibility}, \cite{non-negative-valuation}, \cite{with-tim}. In particular, \cite[Theorem 1.8]{non-negative-valuation} uses key forms to characterize a class $\scrC$ of divisorial valuations on $\cc(x,y)$ whose {\em skewness} (introduced in \cite{favsson-tree}) can be `read from' the slope of an extremal ray in the cone of curves on an associated compactification of $\cc^2$. On the other hand, \cite{campillo-piltant-lopez-cones} introduced the class $\surfacenum$ of compactifications of $\cc^2$ which {\em admit systems of numerical curvettes}. Theorem \ref{numerical-thm} shows that for a compactification $X$ of $\cc^2$, $X \in \surfacenum$ iff the divisorial valuation associated to each curve at infinity on $X$ is in $\scrC$; in particular it gives an {\em effective} algorithm to determine if $X \in \surfacenum$. \\

The class $\surface$, which is the central topic of this article, is the subset of $\surfacenum$ for which the key forms associated to the curvess at infinity are {\em polynomials}. In particular $\surface$ contains all $X_V$, $V \in \scrV$ (Proposition \ref{pencil-surfaces}). A geometric interpretation of the class $\surface$ comes from the following observation: if $X$ is a compactification of $\cc^2$ which dominates $\pp^2$, then $X \in \surface$ iff $X$ admits a system of numerical curvettes which `essentially come from affine curves' (Theorem \ref{my-numerical-thm}). \\

The organization of this article is follows: Section \ref{backtion} and the appendix (Section \ref{appendix}) contains some background regarding key forms, {\em semidegrees} (which are negatives of divisorial valuations), and associated {\em degree-wise Puiseux series}. In Section \ref{surfacection} we introduce the classes $\surfaceplus \subseteq \surface \subseteq \surfacenum$ of surfaces, characterize them in terms of associated `systems of curvettes', and show that each $X_V$ for $V \in \scrV$ (resp.\ $V \in \scrV_1$) is a member of $\surface$ (resp.\ $\surfaceplus$). Theorem \ref{cox-thm} in Section \ref{coxection} gives an explicit description of Cox rings of surfaces in $\surface$ and in Section \ref{linection} we describe global Enriques semigroups and global Zariski semigroups at infinity of surfaces in $\surface$. Finally, in Section \ref{onection} we reformulate the results of Sections \ref{coxection} and \ref{linection} for surfaces of the form $X_V$ for $V \in \scrV_1$ in terms of the approximate roots of the associated polynomial(s).\\

This work has been done at the Weizmann Institute as an Azrieli Fellow. I am grateful to the Azrieli Foundation for the award of an Azrieli Fellowship.

\section{Background} \label{backtion}
\subsection{Semidegrees and degree-wise Puiseux series} \label{semisection}
Let $X$ be a normal complete algebraic surface containing $U := \cc^2$, and $\Gamma$ be an irreducible curve at infinity, i.e.\ an irreducible components of $X\setminus U$. Let $\delta$ be the {\em semidegree} on $\cc(X)$ determined by $\Gamma$, i.e.\ $\delta(f)$, where $f \in \cc(X)$, is the order of pole of $f$ along $\Gamma$ (in other words, $\delta$ is the negative of the {\em divisorial discrete valuation} on $\cc(X)$ associated to $\Gamma$). Choose coordinates $(x,y)$ on $U$ such that $\delta(x) > 0$ and let $\tilde p := \delta(x)$. Then there exists $\phi(x) \in \cc[x^{1/\tilde p}, x^{-1/\tilde p}]$, and a rational number $r \in \frac{1}{\tilde p}\zz$ such that $r < \ord_x(\phi)$ and
\begin{align}
\delta(f(x,y)) = \tilde p\deg_x\left(f(x,y)|_{y = \phi(x) + \xi x^{r}}\right) \label{puiseux-degree}
\end{align}
for all $f \in \cc(x,y)$, where $\xi$ is an indeterminate (\cite[Theorem 1.2]{sub2-1}). We call $\tilde \phi(x) := \phi(x) + \xi x^{r}$ the {\em generic degree-wise Puiseux series} associated to $\delta$. The field of {\em degree-wise Puiseux series} in $x$ is 
$$\dpsxc := \bigcup_{p=1}^\infty \cc((x^{-1/p})) = \left\{\sum_{j \leq k} a x^{j/p} : k,p \in \zz,\ p \geq 1 \right\},$$
where for each integer $p \geq 1$, $\cc((x^{-1/p}))$ denotes the field of Laurent series in $x^{-1/p}$. We refer to Section \ref{appendix} for some notions, e.g.\ {\em conjugacy, Puiseux pairs}, e.t.c.\ of degree-wise Puiseux series. The usual factorization of polynomials in terms of Puiseux series (see e.g.\ \cite[Section 1.5]{casas-alvero}) implies the following

\begin{thm} \label{dpuiseux-factorization}
Let $f \in \cc[x,y]$. Then there are unique (up to conjugacy) degree-wise Puiseux series $\phi_1, \ldots, \phi_k$, a unique non-negative integer $m$ and $c \in \cc^*$ such that
$$f = cx^m \prod_{i=1}^k \prod_{\parbox{1.75cm}{\scriptsize{$\psi_{ij}$ is a con\-ju\-ga\-te of $\psi_i$}}}\mkern-27mu \left(y - \psi_{ij}(x)\right)$$
\end{thm}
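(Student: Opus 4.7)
The plan is to deduce the statement from the classical Newton--Puiseux factorization, cast in the variable $x$ at infinity. The essential input is that the field $\dpsxc = \bigcup_{p \geq 1} \cc((x^{-1/p}))$ is the algebraic closure of $\cc((x^{-1}))$; this is the analogue at infinity of the local Puiseux theorem around $x=0$, obtained from it by the change of variables $x \mapsto 1/x$. Consequently every polynomial in $y$ with coefficients in $\cc[x]$ splits into linear factors over $\dpsxc$.

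First I would write $f = a(x) y^n + a_{n-1}(x) y^{n-1} + \cdots + a_0(x)$ with $a(x) \in \cc[x] \setminus \{0\}$ and $n = \deg_y f$. Comparing leading $y$-coefficients on both sides of the asserted factorization fixes $cx^m = a(x)$, thereby determining $c \in \cc^*$ and $m \in \zz_{\geq 0}$ uniquely. After this normalization $\tilde f := f/(cx^m) \in \dpsxc[y]$ is monic of degree $n$ in $y$, and by the Puiseux theorem just cited it splits as $\tilde f = \prod_{l=1}^n (y - \theta_l)$ with $\theta_l \in \dpsxc$.

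The next step is to regroup the roots. Since $\tilde f$ lies in $\cc((x^{-1}))[y]$, the absolute Galois group of $\cc((x^{-1}))$ permutes $\{\theta_1, \ldots, \theta_n\}$, and its orbits coincide with the conjugacy classes of degree-wise Puiseux series recalled in the Appendix (both are parametrised by substitutions $x^{1/p} \mapsto \zeta x^{1/p}$ for roots of unity $\zeta$, since this Galois group is topologically generated by such substitutions). Choosing a representative $\psi_i$ from each orbit and listing its conjugates as $\psi_{i1}, \ldots, \psi_{i p_i}$ yields the required factorization
\[
f = cx^m \prod_{i=1}^k \prod_{j=1}^{p_i} (y - \psi_{ij}).
\]

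Uniqueness of the $\psi_i$'s up to conjugacy is then immediate from unique factorization in $\dpsxc[y]$: the factors $(y - \psi_{ij})$ are precisely the monic linear (hence irreducible) divisors of $f$, determined up to reordering, and the partition into conjugacy classes is canonical. The main ingredient on which everything rests is the algebraic closedness of $\dpsxc$; once this is granted, the remainder is Galois-theoretic bookkeeping.
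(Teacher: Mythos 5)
Your proof follows the route the paper gestures at (by citing Casas-Alvero, \S1.5) without writing out: use that $\dpsxc$ is the algebraic closure of $\cc((x^{-1}))$, split $f$ in $\dpsxc[y]$, and observe that the Galois action of $\cc((x^{-1}))$ groups the roots into exactly the conjugacy classes of Definition~\ref{dpuiseuxnition}. That part is correct and is precisely the content of the cited classical result, and your uniqueness argument (unique factorization into monic linear factors in $\dpsxc[y]$, together with the canonical partition into Galois orbits) is fine.

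There is, however, a genuine gap at the step where you assert that comparing leading $y$-coefficients ``fixes $cx^m = a(x)$.'' For a general $f \in \cc[x,y]$ the leading $y$-coefficient $a(x) \in \cc[x]$ need not be a monomial, and then no choice of $c \in \cc^*$, $m \in \zz_{\geq 0}$ works. For instance $f = (x-1)y + 1$ has $a(x) = x-1$; its only root in $\dpsxc$ is $\psi(x) = -\tfrac{1}{x-1} = -x^{-1} - x^{-2} - \cdots$, of polydromy order $1$, so the Puiseux factorization is $f = (x-1)(y-\psi)$, and there is no way to rewrite $x-1$ as $cx^m$. What your argument actually establishes is the statement with $a(x) \in \cc[x]$ the (unique) leading $y$-coefficient in place of $cx^m$; the stronger form $a(x) = cx^m$ requires the additional hypothesis that $a(x)$ has no nonzero root (equivalently, that the curve $\{f=0\}$ has no vertical asymptote $x = c$ with $c \neq 0$). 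This imprecision is inherited from the statement of Theorem~\ref{dpuiseux-factorization} itself, and the extra hypothesis does hold for the polynomials to which the paper later applies the result; but as a proof of the theorem as literally written, the step needs to be flagged rather than asserted, since the deduction $cx^m = a(x)$ is exactly what is in question.
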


The following proposition, which is a straightforward corollary of \cite[Proposition 4.2]{sub2-1}, illustrates a relation between degree-wise Puiseux factorization of polynomials and the behaviour at infinity of the curves they define: 

\begin{prop} \label{puiseuxfinity}
Let $X$ be a normal algebraic surface containing $\cc^2$. Let $\Gamma_1, \ldots, \Gamma_N$ be the irreducible components of $X\setminus \cc^2$ and $\delta_j$, $1 \leq j \leq N$, be the semidegree on $\cc(X)$ induced by $\Gamma_j$. Choose coordinates $(x,y)$ on $\cc^2$ such that $\delta_j(x) > 0$ for each $j$, $1 \leq j \leq N$. For each $j$, $1 \leq j \leq N$, let $\tilde \phi_j(x,\xi):= \phi_j(x) + \xi x^{r_j}$ be the generic degree-wise Puiseux series associated to $\delta_j$. Let $f \in \cc[x,y]$ and $C$ be the closure in $X$ of the curve $\{f=0\} \subseteq \cc^2$. 
\begin{enumerate}
\item Assume $C$ is proper and $C \cap \Gamma_j \cap \Gamma_k = \emptyset$ for all $j \neq k$. Then the degree-wise Puiseux factorization of $f$ is of the form
\begin{align}
f = c\prod_{i=1}^N \prod_{j=1}^{n_i} \prod_{k=1}^{\tilde q_{ij}}\left(y - \psi_{ijk}(x)\right) \label{all-interior-factorization}
\end{align}
where $c \in \cc^*$, $n_i := |C \cap \Gamma_i|$, and $\psi_{ijk}$'s are conjugates of some $\psi_{ij}$ of the form 
$$\phi_i(x) + \xi_{ij} x^{r_i} + \ldt$$
where $\xi_{ij} \in \cc$ and $\ldt$ stands for `terms with lower degree' in $x$. 
\item Assume in addition that each $C$ intersects each $\Gamma_j$ transversally (at points where both $\Gamma_j$ and $X$ are nonsingular). Then in \eqref{all-interior-factorization}, for all $i,j$, $\tilde q_{ij} = \tilde p_i$, and $\psi_{ij} - \phi_i(x) - \xi_{ij} x^{r_i} \in \cc((x^{-1/\tilde p_i}))$. 
\end{enumerate} 
\end{prop}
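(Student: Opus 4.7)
The plan is to combine the degree-wise Puiseux factorization of Theorem \ref{dpuiseux-factorization} with the pole-order formula \eqref{puiseux-degree}, and then to invoke \cite[Proposition 4.2]{sub2-1} to match each Puiseux factor of $f$ with a unique component at infinity. First I would apply Theorem \ref{dpuiseux-factorization} to write
$$f = c x^m \prod_\alpha \bigl(y - \psi_\alpha(x)\bigr),$$
with the $\psi_\alpha$ grouped into conjugacy classes. For each factor and each $j$, formula \eqref{puiseux-degree} gives
$$\delta_j\bigl(y - \psi_\alpha\bigr) \;=\; \tilde p_j \, \deg_x\bigl(\phi_j(x) + \xi x^{r_j} - \psi_\alpha(x)\bigr).$$
A straightforward case analysis on where $\psi_\alpha$ first differs from $\phi_j + \xi x^{r_j}$ as a degree-wise Puiseux series shows that $\psi_\alpha$ produces a branch of $C$ meeting $\Gamma_j$ exactly when $\psi_\alpha$ matches $\phi_j$ at every $x$-exponent strictly greater than $r_j$, i.e.\ $\psi_\alpha = \phi_j(x) + \eta\,x^{r_j} + \ldt$ for some $\eta \in \cc$.

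Next I would invoke \cite[Proposition 4.2]{sub2-1} to convert this valuative information into geometry: a Puiseux factor $y - \psi_\alpha$ corresponds to a branch of $C$ at infinity meeting $\Gamma_i$ precisely when that factor contributes to $\delta_i$. The disjointness hypothesis $C \cap \Gamma_j \cap \Gamma_k = \emptyset$ for $j \neq k$ then forces every $\psi_\alpha$ to be associated to a unique $\Gamma_i$, so the Puiseux factors partition naturally as $\{\psi_{ijk}\}$ with $i$ indexing the component met, $j$ indexing the $n_i = |C \cap \Gamma_i|$ intersection points on $\Gamma_i$, and $k$ running over the $\tilde q_{ij}$ conjugates of $\psi_{ij}$. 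Properness of $C$ combined with $\delta_j(x) > 0$ for every $j$ absorbs any residual $x^m$ factor into this same grouping (branches of $\{x=0\}$ at infinity appear on some $\Gamma_i$ and are already counted among the $\psi_{ijk}$), which yields the factorization of (1) with each $\psi_{ij}$ of the required form.

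For (2), transversality at smooth points of $\Gamma_i$ is equivalent to $(C \cdot \Gamma_i)_P = 1$ at every $P \in C \cap \Gamma_i$. A direct computation in a chart adapted to $\Gamma_i$, or a further appeal to \cite[Proposition 4.2]{sub2-1}, shows that this local intersection multiplicity simultaneously detects the polydromy $\tilde q_{ij}$ of $\psi_{ij}$ and the denominators of the exponents appearing in the tail $\psi_{ij} - \phi_i(x) - \xi_{ij}\,x^{r_i}$; transversality then forces both $\tilde q_{ij} = \tilde p_i$ and $\psi_{ij} - \phi_i(x) - \xi_{ij}\,x^{r_i} \in \cc((x^{-1/\tilde p_i}))$. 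The main obstacle is exactly this last step, namely translating the geometric condition ``transversality at smooth points of $X$'' into a quantitative statement about polydromy and denominators. Modulo \cite[Proposition 4.2]{sub2-1}, which packages precisely this dictionary between local data on $\Gamma_i$ and tails of degree-wise Puiseux series, the remainder of the argument is formal manipulation of Theorem \ref{dpuiseux-factorization} under the valuation formula \eqref{puiseux-degree}.
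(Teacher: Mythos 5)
The paper does not actually supply a proof of this proposition; it simply asserts that it is a straightforward corollary of \cite[Proposition 4.2]{sub2-1}. Your strategy --- combine Theorem \ref{dpuiseux-factorization} with the pole-order formula \eqref{puiseux-degree} and then invoke \cite[Proposition 4.2]{sub2-1} to match Puiseux roots with irreducible components at infinity --- is exactly the route the paper is gesturing at, and most of the argument is in order.

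There is, however, a concrete weak spot: your explanation of why the $x^m$ factor from Theorem \ref{dpuiseux-factorization} disappears is not correct. That factor is not recording ``branches of $\{x=0\}$'' of $C$; it records the $x$-degree of $\lc_y(f)$, and when $m > 0$ it corresponds to branches of $C$ that approach a point at infinity with $x$ bounded (vertical asymptotes, where $y \to \infty$ at finite $x$). Such branches are not parametrized by degree-wise Puiseux series in $x$ and so are \emph{not} ``already counted among the $\psi_{ijk}$'' --- they are invisible to the product $\prod(y - \psi_{ijk})$ entirely. For the conclusion \eqref{all-interior-factorization} you need to actually \emph{prove} that $m = 0$ under the stated hypotheses, and ``$\delta_j(x) > 0$ for all $j$'' alone does not do it: for $X = \pp^2$, $N=1$, $f = xy+1$ one has $\delta_1(x) = 1 > 0$, the disjointness hypothesis is vacuous, yet $m = 1$ and $\deg_y f = 1 < n_1 = 2$, so \eqref{all-interior-factorization} cannot hold as written. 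So either you need to identify what in the hypotheses (for instance the intended reading of ``$C$ is proper'', or an implicit assumption such as $\deg_y f = \deg f$, which does hold in all the places the proposition is applied in the paper) forbids a vertical asymptote and hence forces $\lc_y(f) \in \cc^*$, or you should explicitly restrict $f$. Replacing your parenthetical remark with that verification would close the gap.
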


\subsection{Key forms}
Let $X$, $\Gamma$, $\delta$ and $\tilde \phi(x,\xi) = \phi(x) + \xi x^{r}$ be as in Section \ref{semisection}. One can associate to $\delta$ a finite sequence of elements in $\cc[x,x^{-1},y]$ called the {\em key forms} of $\delta$ (see \cite[Definition 3.17]{contractibility}). The sequence starts with $f_0 := x, f_1 := y$, and for each $n \geq 1$, the ($n+1$)-th key form $f_{n+1}$ is an element in $\cc[f_0,f_0^{-1},f_1,f_2, \ldots, f_n]$ whose $\delta$-value is smaller than the `expected' value. An algorithm and detailed example for the computation of key forms of $\delta$ from $\phi$ and $\omega$ appears in \cite[Section 3.3]{contractibility}.

\begin{example} 
\mbox{}\\
\begin{tabular}{|l|l|}
\hline
$\phi(x) + \xi x^\omega$ & key forms\\
\hline 
$\xi x^{p/q}$ & $x,y$ \\
\hline
$cx^{p/q} + \xi x^\omega,$ $c \in \cc^*$, $q > 0$, & $x,y, y^q - c^qx^p$ \\
$p,q$ rel.\ prime integers, $\omega < \frac{p}{q}$ & \\
\hline
$x^{5/2} + x^{-3/2} + \xi x^{-5/2}$ & $x,y,y^2 - x^5, y^2 - x^5 - 2x$\\
\hline
$x^{5/2} + x^{-1} + x^{-3/2} + \xi x^{-5/2}$ & $x,y,y^2 - x^5, y^2 - x^5 - 2x^{-1}y, y^2 - x^5 - 2x^{-1}y - 2x$\\
\hline
\end{tabular}
\end{example} 
Theorem \ref{key-thm} below sums up the properties of key forms that we need. The first assertion of Theorem \ref{key-thm} follows from defining properties of key forms (\cite[Proposition 3.28]{contractibility}), the second assertion follows from \cite[Proposition 4.2]{contractibility} and the third assertion is a corollary of \cite[Theorem 1.4]{non-negative-valuation}.

\begin{thm} \label{key-thm}
Let the key forms of $\delta$ be $f_0, \ldots, f_{n}$.
\begin{enumerate}
\item Let $p$ be the polydromy order (see Definition \ref{dpuiseuxnition}) of $\phi(x)$. Then there is $\psi(x) \in \cc((x^{-1/p}))$ such that:
\begin{align} 
\psi(x)	&= \phi(x) + \text{terms with $x$-degree $\leq r$}, \notag \\
f_{n} &= \prod_{j=1}^p (y - \psi_j(x)),\quad \text{where $\psi_j$'s are the conjugates of $\psi$.}  \label{last-factorization}
\end{align}
\item \label{polynomial-key} The following are equivalent:
\begin{enumerate}
\item $f_k$ is a polynomial for all $k$, $0 \leq k \leq n$.
\item $f_{n}$ is a polynomial.
\item There exists a polynomial $f$ with degree-wise Puiseux factorization of the form:
\begin{align} 
\begin{split}
f 		&= \prod_{i=1}^k \prod_{\parbox{1.75cm}{\scriptsize{$\psi_{ij}$ is a con\-ju\-ga\-te of $\psi_i$}}}\mkern-27mu \left(y - \psi_{ij}(x)\right),\quad \text{where for each $i$,}\\
\psi_i	&= \phi(x) + \xi_{i} x^{r} + \ldt, \qquad \xi_{i} \in \cc.
\end{split} \label{delta-factorization} 
\end{align}
\end{enumerate}
\item \label{non-negative-key} The following are equivalent:
\begin{enumerate}
\item $\delta(f) \geq 0$ for all $f \in \cc[x,y]\setminus\{0\}$.
\item $\delta(f_{n}) \geq 0$.
\item $\delta(f) \geq 0$ for some $f \in \cc[x,x^{-1},y]$ which satisfies \eqref{delta-factorization}.
\end{enumerate}
\end{enumerate}
\end{thm}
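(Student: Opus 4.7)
The plan is to handle the three parts in order, using part (1) as the structural backbone for parts (2) and (3). All three assertions are corollaries of, respectively, \cite[Proposition 3.28]{contractibility}, \cite[Proposition 4.2]{contractibility}, and \cite[Theorem 1.4]{non-negative-valuation}; below I sketch how each follows from the definitional machinery of key forms.

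For part (1), I would proceed by induction on the length of the key form sequence. Recall that each $f_{k+1}$ is a Laurent polynomial in $f_0, \ldots, f_k$ whose $\delta$-value is strictly smaller than the ``expected'' value computed from $\delta(f_0), \ldots, \delta(f_k)$. This drop reflects exactly the cancellation produced by substituting the generic degree-wise Puiseux series $\tilde\phi(x,\xi)$ into $f_{k+1}$, so $f_{k+1}$ captures the next refinement of $\phi$. After $n$ steps the refinement saturates, and the resulting $f_n$ is monic in $y$ of degree equal to the polydromy order $p$ of $\phi$. Factoring $f_n$ over $\dpsxc$ and using that its $p$ roots must be conjugate under the action of the $p$-th roots of unity yields \eqref{last-factorization} with $\psi$ agreeing with $\phi$ modulo terms of $x$-degree $\leq r$.

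For part (2), the implication (a) $\Rightarrow$ (b) is immediate, and (b) $\Rightarrow$ (c) is realized by taking $f := f_n$ and invoking part (1). The real content is (c) $\Rightarrow$ (a). Given a polynomial $f$ with factorization \eqref{delta-factorization}, I would track through the recursive definition of key forms and observe that the only way negative powers of $x$ could appear in some $f_k$ is through the denominators of the Puiseux exponents of $\phi$ at the $k$-th stage. The existence of a \emph{polynomial} $f$ whose roots match $\phi$ up to the final precision $r$ constrains these exponents to combine integrally at every intermediate stage, forcing each $f_k$ to be a polynomial as well.

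For part (3), (a) $\Rightarrow$ (b) is again trivial and (b) $\Rightarrow$ (c) follows by part (1). For (c) $\Rightarrow$ (a), the hypothesis $\delta(f) \geq 0$ combined with \eqref{puiseux-degree} and the assumed factorization of $f$ translates into a positivity statement on the Puiseux data of $\phi$; essentially, the bookkeeping $x$-degrees of the factors $(y - \psi_{ij}(x))$ aggregate to a non-negative integer. Any $g \in \cc[x,y]\setminus\{0\}$ has its own degree-wise Puiseux factorization by Theorem \ref{dpuiseux-factorization}, and applying \eqref{puiseux-degree} again expresses $\delta(g)$ as a non-negative integer combination of quantities of exactly the same type, yielding $\delta(g) \geq 0$. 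The main obstacle throughout is handling the \ldt\ tails in \eqref{last-factorization} and \eqref{delta-factorization}, which encode potentially long Puiseux tails whose interaction with the recursive structure of key forms must not derail polynomiality in part (2) or non-negativity in part (3). I would address this uniformly by reducing every step back to the inductive construction of key forms, so that the tails enter only through the already-controlled values $\delta(f_0), \ldots, \delta(f_{n-1})$.
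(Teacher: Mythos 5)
Your proposal matches the paper's treatment: the paper does not prove this theorem in-line, but presents it as a summary, attributing the three assertions to \cite[Proposition 3.28]{contractibility}, \cite[Proposition 4.2]{contractibility}, and \cite[Theorem 1.4]{non-negative-valuation} respectively, which are precisely the references you invoke. Your additional sketches supply intuition the paper leaves implicit (in particular the observation that $f := f_n$ witnesses the ``(b) $\Rightarrow$ (c)'' implications in parts \mathref{polynomial-key} and \mathref{non-negative-key}), but the real content is deferred to the same cited works in both cases.
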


\begin{prop}[{\cite[Propositions 4.2 and 4.7]{sub2-1}}] \label{2compact}
Let $\delta$ be as in Theorem \ref{key-thm}. Assume $\delta$ is {\em not} the usual degree in $(x,y)$ coordinates, Then there exists a unique compactification $X'$ of $\cc^2$ such that 
\begin{enumerate}
\item $X'$ is projective and normal.
\item $X'_\infty := X' \setminus \cc^2$ has two irreducible components $C'_1,C'_2$.
\item The semidegree on $\cc[x,y]$ corresponding to $C'_1$ and $C'_2$ are respectively $\deg$ and $\delta$. 
\end{enumerate}
All singularities of $X'$ are {\em rational} (which implies in particular that all Weil divisors are $\qq$-Cartier). Let $(C'_i,C'_j)$, $1 \leq i, j\leq 2$, denote the intersection number of $C'_i$ and $C'_j$. Then
\begin{enumerate}
\addtocounter{enumi}{3}
\item $(C'_2,C'_2) < 0$,
\item $(C'_1,C'_1) = -q\delta(f_{l+1})$, where $q$ is a positive rational number,
\item Let $D'_2 := C'_1 - \frac{(C'_1,C'_1)}{(C'_1,C'_2)}C'_2$. Then $(C'_1,D'_2) = 0$ and $(C'_2,D'_2) = 1$.
\end{enumerate}  
\end{prop}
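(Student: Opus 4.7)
The plan is to construct $X'$ explicitly as a birational modification of $\pp^2$ guided by the key forms $f_0,\ldots,f_n$ of $\delta$, and then to verify the intersection-theoretic statements by pushforward from a smooth resolution.

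\emph{Construction.} Since $\delta \neq \deg$, the center of $\delta$ on $\pp^2$ is a point $P_0$ on the line at infinity $L_\infty$. I would use the factorization of $f_n$ from Theorem \ref{key-thm}(1) to read off a sequence of infinitely near points over $P_0$ whose successive blowups produce a smooth projective surface $\sigma \colon Y \to \pp^2$ on which $\delta$ is the semidegree of a prime exceptional divisor $E$. The reduced fiber $\sigma^{-1}(L_\infty) \cup E$ is a tree of smooth rational curves whose dual graph is determined by the Puiseux data encoded in the key forms. Letting $\Delta \subseteq Y$ consist of the components of $Y \setminus \cc^2$ other than $\tilde L_\infty$ and $E$, each connected component of $\Delta$ is a chain of rational curves with negative-definite intersection matrix, so by Artin's criterion $\Delta$ contracts to a finite set of rational singularities. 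The resulting surface $X'$ is normal and projective with $X'_\infty = C'_1 \cup C'_2$ where $C'_1, C'_2$ are the images of $\tilde L_\infty$ and $E$; this gives (1)--(3), the rationality assertion, and the $\qq$-Cartier property for Weil divisors.

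\emph{Intersection numbers and uniqueness.} Each of (4)--(6) is computed by the pushforward identity
\[
(A', B')_{X'} = (A,B)_Y - v_A^{\,T} M^{-1} v_B,
\]
where $M$ is the intersection matrix of $\Delta$ and $v_A$ records the intersection numbers of $A$ with the components of $\Delta$. Applied with $A, B \in \{\tilde L_\infty, E\}$ and the values $E^2 = -1$, $\tilde L_\infty^2 = 1 - s$ on $Y$ (where $s$ is the number of blowups centered on the proper transform of $L_\infty$), this yields $(C'_1,C'_1)$, $(C'_1, C'_2)$, $(C'_2, C'_2)$ as explicit rational numbers. The bound $(C'_2, C'_2) < 0$ is immediate because $\cc^2 = X' \setminus (C'_1 \cup C'_2)$ is affine, so $C'_1 \cup C'_2$ supports an ample divisor while $C'_2$ alone cannot. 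The relation $(C'_1, D'_2) = 0$ is immediate from the definition of $D'_2$. For uniqueness I would argue that any alternative $X''$ satisfying (1)--(3) admits a common smooth resolution with $X'$; each is then obtained from this resolution by contracting precisely the irreducible boundary components other than those inducing $\deg$ and $\delta$, and such contractions are unique up to isomorphism over $\cc^2$.

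\emph{Main obstacle.} The hard part will be the explicit identification $(C'_1,C'_1) = -q\,\delta(f_{n})$ with $q > 0$, and the companion identity $(C'_2, D'_2) = 1$. Both require computing specific entries of $M^{-1}$ in terms of $\delta$-values of the key forms. I would argue by induction on $n$: for $n = 1$ (a single Puiseux pair) the chain of blowups and its contraction produce a weighted-projective-like surface where both identities are direct. For the inductive step, use the recursive formula expressing $\delta(f_{j+1})$ in terms of $\delta(f_j)$ together with the Puiseux exponent and polydromy order of the transition; the incremental chain produced contributes to the relevant entry of $M^{-1}$ a term proportional to $\delta(f_{j+1})$, and summing these contributions recovers $(C'_1,C'_1) = -q\,\delta(f_{n})$. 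The identity $(C'_2, D'_2) = 1$ is, after unwinding the definition of $D'_2$, equivalent to $\det \begin{pmatrix}(C'_1)^2 & (C'_1,C'_2) \\ (C'_1,C'_2) & (C'_2)^2\end{pmatrix} = -(C'_1,C'_2)$, which follows from the same bookkeeping.
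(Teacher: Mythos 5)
The paper does not prove Proposition \ref{2compact}; it is imported wholesale from \cite[Propositions 4.2 and 4.7]{sub2-1}, so there is no internal argument to compare yours against. With that caveat, your construction sketch (blow up $\pp^2$ along the infinitely near points prescribed by the key-form/Puiseux data to reach a smooth $Y$ on which $\delta$ is divisorial, contract the boundary components other than $\tilde L_\infty$ and $E$ by Artin's criterion, then compute intersection numbers by the Mumford pushforward formula) is the natural route, and your translation of $(C'_2,D'_2)=1$ into the determinant identity is a correct algebraic reduction.

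Two points need more care, however. First, the claim that $(C'_2,C'_2)<0$ is ``immediate'' from affineness of $\cc^2$ does not hold up. Affineness of the complement constrains the effective cone, and the Hodge index theorem (the Picard rank of $X'$ is $2$) gives only $(C'_1)^2(C'_2)^2 < (C'_1,C'_2)^2$. Since the proposition itself asserts $(C'_1)^2 = -q\,\delta(f_{l+1})$ with $q>0$, the number $(C'_1)^2$ is \emph{strictly positive} whenever $\delta(f_{l+1})<0$ --- a case that the paper actively relies on in the proof of Theorem \ref{numerical-thm} --- so neither affineness nor Hodge index by itself forces $(C'_2)^2<0$. This needs the actual pushforward computation, or a geometric argument specific to the chain configuration. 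Second, you acknowledge that $(C'_1)^2 = -q\,\delta(f_{l+1})$ and the determinant identity are where the real work lies; the inductive scheme you outline (tracking how each new formal Puiseux pair changes the relevant entries of $M^{-1}$) is plausible, but as written it is a plan rather than a proof, and carrying it out is essentially the entire content of the cited \cite[Propositions 4.2 and 4.7]{sub2-1}.
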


\section{Surfaces admitting systems of numerical (\mytype) curvettes} \label{surfacection}
\begin{defn} \label{curvettinition}
\cite{campillo-piltant-lopez-cones} introduced the notion of surfaces {\em admitting systems of numerical curvettes}. Here we extend the scope of the definition: Let $X$ be a normal complete algebraic surface containing $U := \cc^2$ as a dense open subset, and let $\Gamma_1, \ldots, \Gamma_N$ be the irreducible components of $X\setminus U$. An effective Weil divisor $\Delta$ on $X$ is called a {\em numerical $\Gamma_i$-curvette}, $1 \leq i \leq N$, if $(\Delta, \Gamma'_i) >0$ and $(\Delta,\Gamma'_j)= 0$ for all $j \neq i$. We say that $X$ {\em admits a system of numerical curvettes}, or equivalently, $X \in \surfacenum$, iff there exists a numerical $\Gamma_i$-curvette for all $i$, $1 \leq i \leq N$.  
\end{defn}

\begin{thm} \label{numerical-thm}
Let $X$ be as in Definition \ref{curvettinition}. For each $j$, $1 \leq j \leq N$, let $\delta_j$ be the semidegree associated to $\Gamma_j$ and $f_j$ be the {\em last} key form of $\delta_j$. Then the following are equivalent:
\begin{enumerate}[(a)]
\item \label{numerical-assertion} $X$ admits a system of numerical curvettes.
\item \label{poly-assertion} For each $j$, $1 \leq j \leq N$, either $\delta_j(f_j) \geq 0$ or $f_j$ is a polynomial.
\end{enumerate}
\end{thm}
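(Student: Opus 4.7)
The plan is to prove each direction separately, with Theorem \ref{key-thm} and Proposition \ref{2compact} as the main inputs.

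For $(\ref{poly-assertion}) \Rightarrow (\ref{numerical-assertion})$, I fix $j$ and construct a numerical $\Gamma_j$-curvette on $X$, splitting into two cases according to which clause of (\ref{poly-assertion}) holds. If $f_j$ is a polynomial, Theorem \ref{key-thm}(\ref{polynomial-key}) produces a polynomial $g \in \cc[x,y]$ whose degree-wise Puiseux factorization consists of branches $\psi_i = \phi_j(x) + \xi_i x^{r_j} + \ldt$ with $\xi_i \in \cc$. Choosing the $\xi_i$ generically so that no such branch matches the leading Puiseux data of any $\Gamma_k$ with $k \neq j$ and so that $\{g=0\}$ avoids the finite set $\bigcup_{j' \neq k'} \Gamma_{j'} \cap \Gamma_{k'}$, Proposition \ref{puiseuxfinity} implies that the closure $\bar C$ in $X$ meets only $\Gamma_j$ at infinity, yielding a numerical $\Gamma_j$-curvette.

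If instead $\delta_j(f_j) \geq 0$ with $f_j$ not a polynomial, I apply Proposition \ref{2compact} to $\delta_j$ to obtain the compactification $X_j'$ of $\cc^2$ with boundary components $C_1', C_2'$ inducing $\deg$ and $\delta_j$ respectively. Proposition \ref{2compact}(5) gives $(C_1', C_1') = -q \delta_j(f_j) \leq 0$, so the $C_2'$-coefficient of $D_2' = C_1' - \frac{(C_1', C_1')}{(C_1', C_2')} C_2'$ is non-negative, making $D_2'$ effective on $X_j'$. I then transfer $D_2'$ to $X$ via a common resolution $\pi \colon Y \to X$, $\pi' \colon Y \to X_j'$, and set $\Delta_j := \pi_*(\pi'^* D_2')$. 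Since the unique component of $Y$ inducing $\delta_j$ maps both to $\Gamma_j$ on $X$ and to $C_2'$ on $X_j'$, the projection formula computes the intersection numbers $(\Delta_j, \Gamma_k)$ on $X$ from the known values $(D_2', C_1') = 0$ and $(D_2', C_2') = 1$ on $X_j'$, yielding the required curvette properties.

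For $(\ref{numerical-assertion}) \Rightarrow (\ref{poly-assertion})$, I argue by contrapositive: assuming $f_j$ is not a polynomial and $\delta_j(f_j) < 0$, I show no numerical $\Gamma_j$-curvette exists on $X$. A hypothetical such $\Delta_j = \bar C_j + \sum_k a_k \Gamma_k$, after transfer to $X_j'$ through a common resolution, must give an effective divisor numerically proportional to $D_2'$. But $\delta_j(f_j) < 0$ forces $(C_1', C_1') > 0$ by Proposition \ref{2compact}(5), so the $C_2'$-coefficient of $D_2'$ is strictly negative; the only avenue to restore effectivity is for the affine part $\bar C_j$ to contribute a branch matching $\delta_j$ without spurious branches approaching $C_1'$, which by Theorem \ref{key-thm}(\ref{polynomial-key}) would force $f_j$ to be a polynomial, contradicting the hypothesis. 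The main obstacle is the technical bookkeeping in this last direction: tracking effectivity and intersection numbers through the pullback-pushforward on $Y$, carefully handling the $\qq$-Cartier status of Weil divisors on the possibly singular normal surfaces involved, and verifying via Proposition \ref{puiseuxfinity} that any polynomial whose closure in $X$ numerically aligns with $\Gamma_j$ must have all its branches matching the generic Puiseux series of $\delta_j$.
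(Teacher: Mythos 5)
Your proof takes the same overall route as the paper's: route through the two-component compactification $X_j$ of Proposition \ref{2compact}, use the intersection theory there, and appeal to Theorem \ref{key-thm}(\ref{polynomial-key}) to translate between the geometric statement ``closure disjoint from $C'_1$'' and the algebraic statement ``$f_j$ is a polynomial.'' The main divergence is in the $(\mathrm{b})\Rightarrow(\mathrm{a})$ direction when $f_j$ is a polynomial: you try to construct the curvette \emph{directly on $X$} by taking the closure of a curve $\{g=0\}$ with appropriate degree-wise Puiseux branches, whereas the paper first builds $D'_j$ on $X_j$ (where there are only two boundary components, hence only $\Gamma'_0$ to avoid) and then transports it to $X$ via $\pi_*(\pi_j^*(D'_j))$, which handles all the intersection numbers with the remaining $\Gamma_k$'s automatically.

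This divergence is where the gap sits. Theorem \ref{key-thm}(\ref{polynomial-key}) is an existence statement: it gives you \emph{some} polynomial with the stated factorization (in fact $f_j$ itself), but it does not grant freedom to tune the parameters $\xi_i$, so ``choosing the $\xi_i$ generically'' is not licensed by that theorem. Moreover, Proposition \ref{puiseuxfinity} as stated goes from properties of the closure to properties of the factorization; the converse direction you need --- from the factorization form to the assertion that the closure in $X$ meets only $\Gamma_j$ and misses $\bigcup_{j'\neq k'}\Gamma_{j'}\cap\Gamma_{k'}$ --- is not literally what it says, and on $X$ with $N$ boundary components this requires some work (some $\Gamma_k$'s may carry generic Puiseux series agreeing with $\tilde\phi_j$ up to a long truncation). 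The paper sidesteps both issues by working on $X_j$, where the only component to avoid corresponds to $\deg$, and then using the functoriality of pull-back/push-forward.

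In $(\mathrm{a})\Rightarrow(\mathrm{b})$, your contrapositive is fine in outline but compresses the actual argument to ``the only avenue to restore effectivity is for the affine part to contribute a branch matching $\delta_j$.'' The paper's argument is more concrete and should be spelled out: after pushing the curvette forward to $X_j$, the identity $(\Delta'_j,\Gamma'_0)=0$ together with $(\Gamma'_0,\Gamma'_0)>0$, $(\Gamma'_j,\Gamma'_0)>0$, and non-negativity of $(D'_j,\Gamma'_0)$ forces $a_0=a_j=(D'_j,\Gamma'_0)=0$; disjointness of $D'_j$ from $\Gamma'_0$ then puts the defining polynomial of $D'_j\cap\cc^2$ in the form \eqref{delta-factorization}, and Theorem \ref{key-thm}(\ref{polynomial-key}) gives that $f_j$ is a polynomial. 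You should also justify (as the paper does in its Claim) that the push-forward of the pulled-back curvette to $X_j$ is nonzero and that the pull-back/push-forward preserve the numerical-curvette property; this is the bookkeeping you flag as a potential obstacle, and it does need to be done, though it is straightforward once written down.
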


\begin{proof}
Choose a system of coordinates $(x,y)$ on $\cc^2 = U$ such that no $\delta_j$ is the degree in $(x,y)$-coordinates, and let $X_0 \cong \pp^2$ be the usual compactification of $U$ given by $(x,y) \into [x:y:1]$. Let $\tilde X$ be the minimal normal surface containing $U$ which dominates both $X$ and $X_0$ (via morphisms induced by identification of $U$). Then $\tilde X\setminus U$ has $N+1$ irreducible components $\tilde \Gamma_0, \ldots, \tilde \Gamma_N$, where $\tilde \Gamma_0$ (resp.\ $\tilde \Gamma_i$, $1 \leq i \leq N$,) is the strict transform of the line at infinity on $X_0$ (resp.\ $\Gamma_i$). It follows that the semidegree associated to $\tilde \Gamma_0$ (resp.\ $\tilde \Gamma_i$, $1 \leq i \leq N$,) is the degree in $(x,y)$-coordinates (resp.\ $\delta_i$). Let $\pi: \tilde X \to X$ be the morphism induced by identification of $U$. \\

\eqref{numerical-assertion} $\im$ \eqref{poly-assertion}: Assume $X$ admits a system of numerical curvettes. Fix $j$, $1 \leq j \leq N$. Let $\Delta_j := D_j + \sum_{i=1}^N a_i \Gamma_i$ be a numerical $\Gamma_j$-curvette on $X$, where $D_j$ is an effective Weil divisor such that $\supp(D_j) \not \supset \Gamma_i$ for any $i$, and $a_i \geq 0$ for all $i$, $1 \leq i \leq N$. Let $\tilde \Delta_j := \pi^*(\Delta_j)$. Then $\tilde \Delta_j$ is of the form $\tilde D_j + \sum_{i=0}^N a_i \Gamma_i$, where $a_0 \geq 0$ and $\tilde D_j$ is the strict transform of $D_j$. Moreover, $\tilde \Delta_j$ is a numerical $\tilde \Gamma_j$-curvette on $\tilde X$. Let $X_j$ be the surface $X'$ constructed in Proposition \ref{2compact} with $\delta = \delta_j$. Let $\pi_j: \tilde X \to X_j$ the natural birational morphism, and $\Gamma'_0$ (resp.\ $\Gamma'_j$) be the image of $\tilde \Gamma_0$ (resp.\ $\tilde \Gamma_j$) on $X_j$ (note that $\Gamma'_0 = C_1$ and $\Gamma'_j = C_2$ in the notation of Proposition \ref{2compact}). Set $\Delta'_j := {\pi_j}_*(\tilde \Delta_j) = D'_j + a_0 \Gamma'_0 + a_j \Gamma'_j$, where $D'_j := {\pi_j}_*(\tilde D_j)$.

\begin{proclaim} \label{numerical-claim}
$\Delta'_j \neq 0$. Moreover, $\tilde \Delta_j = \pi_j^*(\Delta'_j)$.
\end{proclaim}

\begin{proof}
Indeed, the only way $\Delta'_j$ can be zero is if $a_0 = a_j = 0$ and $\tilde D_j = 0$. But then we would have $(\tilde \Delta_j, \tilde \Delta_j) = 0$. Since the intersection matrix of $\bigcup_{1\leq i \leq N}\tilde \Gamma_i$ is negative definite, this would imply that $\tilde \Delta_j = 0$, which is a contradiction. This proves the first assertion. The second assertion follows from the fact that the coefficients of exceptional curves in the pullback of a divisor are uniquely determined by the requirement that the intersection number of the pullback with every exceptional curve is zero. 
\end{proof}
W.l.o.g.\ we may assume $\delta_j(f_j) < 0$. Then Proposition \ref{2compact} implies that $(\Gamma'_0,\Gamma'_0) > 0$. Since $(\tilde \Delta_j, \tilde \Gamma_0) = 0$, it follows that 
\begin{align}
(D'_j, \Gamma'_1) = (D'_j, \Gamma'_0) + a_0(\Gamma'_0,\Gamma'_0)  + a_j (\Gamma'_j,\Gamma'_0) = 0. \label{<-equation}
\end{align}
Since $(\Gamma'_0,\Gamma'_0)$ and $(\Gamma'_j,\Gamma'_0)$ are positive, identity \eqref{<-equation} implies that $a_0 = a_j = (D'_j, \Gamma'_0) = 0$. Let $f$ be the polynomial in $\cc[x,y]$ that defines $D'_j \cap \cc^2$. Since $D'_j$ does not intersect $\Gamma'_0$, it follows that $f$ satisfies identity \eqref{delta-factorization} for $\delta:= \delta_j$. Theorem \ref{key-thm} then implies that $f_j$ is a polynomial, as required to complete the proof that \eqref{numerical-assertion} $\im$ \eqref{poly-assertion}.\\

\eqref{poly-assertion} $\im$ \eqref{numerical-assertion}: Fix $j$, $1 \leq j \leq N$. Let $\pi_j: \tilde X \to X_j$ be as above. We claim that there is a numerical $\Gamma'_j$-curvette $D'_j$ on $X_j$. Indeed, if $\delta_j(f_j) \geq 0$, the existence of $D'_j$ follows from Proposition \ref{2compact}. On the other hand, if $f_j$ is a polynomial, then set $D'_j$ to be the closure in $X_j$ of the curve defined by $f_j$ on $\cc^2$. Proposition \ref{puiseuxfinity} then implies that $D'_j$ is a numerical $\Gamma'_j$-curvette, which proves the claim. Now note that $\pi_*(\pi_j^*(D'_j))$ is a numerical $\Gamma_j$-curvette on $X$, which completes the proof of the theorem. 
\end{proof}

\begin{defn} \label{mycurvettinition}
Let $X$ be a normal complete algebraic surface such that 
\begin{enumerate}
\item $X$ contains $U := \cc^2$ as a dense open subset, and 
\item the identity map of $U = \cc^2$ extends to a morphism $X \to X_0 := \pp^2$.
\end{enumerate}
Let $\Gamma_0, \Gamma_1, \ldots, \Gamma_N$ be the irreducible components of $X\setminus U$, where $\Gamma_0$ is the strict transform of the line at infinity on $X_0$. An effective Weil divisor $\Delta$ on $X$ is called a {\em numerical \mytype\ $\Gamma_i$-curvette}, $1 \leq i \leq N$, if 
\begin{enumerate}
\item $\Delta$ is a numerical $\Gamma_i$-curvette,
\item $\supp(\Delta_i)$ contains neither $\Gamma_0$ nor $\Gamma_i$, and
\item $\supp(\Delta_i) \cap U \neq \emptyset$.
\end{enumerate}
We say that $X$ {\em admits a system of numerical \mytype\ curvettes} if there exists a numerical \mytype\ $\Gamma_i$-curvette) for all $i$, $1 \leq i \leq N$.  
\end{defn}

\begin{thm} \label{my-numerical-thm}
Let $X$ be as in Definition \ref{mycurvettinition}. For each $j$, $1 \leq j \leq N$, let $\delta_j$ be the semidegree associated to $\Gamma_j$ and $f_j$ be the {\em last} key form of $\delta_j$. Then the following are equivalent:
\begin{enumerate}[(a)]
\item \label{my-numerical-cartier-assertion} $X$ admits a system of numerical \mytype\ curvettes which are Cartier divisors.
\item \label{my-numerical-assertion} $X$ admits a system of numerical \mytype\ curvettes.
\item \label{my-last-poly-assertion} For each $j$, $1 \leq j \leq N$, the last key form of $\delta_j$ is a polynomial.
\item \label{my-poly-assertion} For each $j$, $1 \leq j \leq N$, all the key forms of $\delta_j$ are polynomials.
\end{enumerate}
\end{thm}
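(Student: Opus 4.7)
The equivalence (d) $\Leftrightarrow$ (c) is immediate from Theorem \ref{key-thm}\eqref{polynomial-key}, and (a) $\Rightarrow$ (b) is trivial. For the remaining two implications I pass through the auxiliary surface $X_j$ of Proposition \ref{2compact} (applied to $\delta = \delta_j$). Since the set of divisorial valuations at infinity of $X_j$ (namely $\deg$ and $\delta_j$) is a subset of that of $X$, the identity on $U$ extends to a birational morphism $\pi_j : X \to X_j$. The components of $X \setminus U$ contracted by $\pi_j$ are precisely $\{\Gamma_i : i \neq 0, j\}$, and since $X_j$ has only rational singularities, the intersection form on these contracted components is negative definite.

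For (c) $\Rightarrow$ (a): assume each $f_j$ is a polynomial. By Theorem \ref{key-thm}\eqref{polynomial-key}, $f_j$ admits a degree-wise Puiseux factorization of the form \eqref{delta-factorization}. Let $D'_j$ be the closure of $\{f_j = 0\}$ in $X_j$. Since every branch in the factorization has leading terms matching the generic series of $\delta_j$, Proposition \ref{puiseuxfinity} gives that $D'_j$ meets only $\Gamma'_j$ (and not $\Gamma'_0$) among the boundary components; in particular $D'_j$ is a numerical $\Gamma'_j$-curvette with $(D'_j, \Gamma'_0) = 0$. Choose $m > 0$ so that $mD'_j$ is Cartier (possible since $X_j$ has rational singularities); then $\pi_j^*(mD'_j)$ is an effective Cartier divisor on $X$, and the projection formula yields that it intersects $\Gamma_i$ trivially for $i \neq 0, j$ (exceptional) and for $i = 0$, and positively for $i = j$. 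Its non-exceptional part is the strict transform of an affine curve, so it intersects $U$ and contains neither $\Gamma_0$ nor $\Gamma_j$ in its support, producing the required Cartier numerical \mytype\ $\Gamma_j$-curvette.

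The main content is (b) $\Rightarrow$ (c). Write the given curvette as $\Delta_j = D_j + \sum_{i \neq 0, j} a_i \Gamma_i$, where $D_j$ has no $\Gamma_i$ in its support; the \mytype\ conditions force $a_0 = a_j = 0$ and $D_j \neq 0$. Let $f \in \cc[x, y]$ define $D_j \cap U$ and $D'_j$ be its closure in $X_j$. The key computation is to show $\pi_j^*(D'_j) = \Delta_j$: writing $\pi_j^*(D'_j) = D_j + \sum_{i \neq 0, j} c_i \Gamma_i$, the coefficients $c_i$ are uniquely determined by $(\pi_j^*(D'_j), \Gamma_i) = 0$ for $i \neq 0,j$ (pullbacks are orthogonal to the exceptional locus), while the $a_i$ satisfy the identical linear system via $(\Delta_j, \Gamma_i) = 0$ for $i \neq 0, j$; negative definiteness of this system forces $c_i = a_i$. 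The projection formula then yields $(D'_j, \Gamma'_0) = (\pi_j^*(D'_j), \Gamma_0) = (\Delta_j, \Gamma_0) = 0$, so $D'_j$ and $\Gamma'_0$ (effective and sharing no components) must be disjoint. Applying Proposition \ref{puiseuxfinity}(1) on $X_j$ (which has only $\Gamma'_0$ and $\Gamma'_j$ at infinity), the polynomial $f$ admits a degree-wise Puiseux factorization of the form \eqref{delta-factorization}, and Theorem \ref{key-thm}\eqref{polynomial-key} concludes that $f_j$ is a polynomial. The most delicate step is the identification $\pi_j^*(D'_j) = \Delta_j$, which rests squarely on the negative definiteness of the intersection matrix on the $\pi_j$-contracted components.
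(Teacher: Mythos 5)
Your proof is correct and follows essentially the same strategy as the paper: pass to the two-component compactification $X_j$ of Proposition~\ref{2compact}, push the curvette forward (resp.\ pull the curve defined by $f_j$ back), and reduce to Theorem~\ref{key-thm} via Proposition~\ref{puiseuxfinity}. The only place where you diverge in exposition is $(b)\Rightarrow(c)$: you explicitly establish $\pi_j^*(D'_j)=\Delta_j$ by comparing the two orthogonality linear systems and invoking negative definiteness, whereas the paper simply asserts $(D'_j,\Gamma'_0)=(D_j,\Gamma_0)=0$ and points back to the argument surrounding Claim~\ref{numerical-claim} in the proof of Theorem~\ref{numerical-thm}; your version is a correct (and arguably cleaner) way to justify that step, though a one-line alternative is to note directly that $(D'_j,\Gamma'_0)=(\Delta_j,\pi_j^*\Gamma'_0)=0$ because $\Delta_j$ is orthogonal to every boundary component except $\Gamma_j$.
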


\begin{proof}
\eqref{my-numerical-assertion} $\im$ \eqref{my-last-poly-assertion}: Fix $j$, $1 \leq j \leq N$. Let $X_j$ be as in the proof of Theorem \ref{numerical-thm}, $\pi_j: X \to X_j$ be the natural morphism, $\Gamma'_0 := \pi_j(\Gamma_0)$ and $\Gamma'_j := \pi_j(\Gamma_j)$. Let $\Delta_j := D_j + \sum_{i=0}^N a_i \Gamma_i$ be a numerical \mytype\ $\Gamma_j$-curvette on $X$, where $D_j$ is a non-zero effective Weil divisor such that $\supp(D_j) \not \supset \Gamma_i$ for any $i$, $a_0 = a_j = 0$, and $a_i \geq 0$, $0 \leq i \leq N$. Set $\Delta'_j := {\pi_j}_*(\Delta_j) = D'_j$, where $D'_j := {\pi_j}_*(D_j)$. It follows that $(D'_j, \Gamma'_0) = (D_j,\Gamma_0) = 0$. Let $f$ be the polynomial in $\cc[x,y]$ that defines $D'_j \cap \cc^2$. The same arguments as in the paragraph following the proof of Claim \ref{numerical-claim} then show that the last key form of $\delta_j$ is a polynomial, as required.\\

\eqref{my-last-poly-assertion} $\im$ \eqref{my-numerical-cartier-assertion}: Fix $j$, $1 \leq j \leq N$. Let $\pi_j: X \to X_j$ be as above. Let $D'_j$ be the closure in $X_j$ of the curve defined by the last key form of $\delta_j$ on $U = \cc^2$. Theorem \ref{key-thm} implies that $D'_j \cap \Gamma'_0 = \emptyset$, so that $D'_j$ is a numerical \mytype\ $\Gamma'_j$-curvette $D'_j$ on $X_j$. Moreover, it follows from Proposition \ref{2compact} that for some $n \geq 1$, $nD'_j$ is a Cartier divisor. Then $\pi_j^*(nD'_j))$ is a Cartier divisor on $X$ which is also a numerical \mytype\ $\Gamma_j$-curvette, as required.\\

The equivalence \eqref{my-poly-assertion} $\Leftrightarrow$ \eqref{my-last-poly-assertion} follows from Theorem \ref{key-thm}. Since the implication \eqref{my-numerical-cartier-assertion} $\im$ \eqref{my-numerical-assertion} is obvious, the proof of Theorem \ref{my-numerical-thm} is complete.
\end{proof}

We are now ready to define the central object of this article. 

\begin{defn} \label{polynation}
We denote by $\surface$ the collection of normal complete algebraic surfaces $X$ which satisfy the following properties:
\begin{enumerate}
\item $X$ contains $U := \cc^2$ as an open subset,
\item for each irreducible curve $\Gamma \subseteq X\setminus U$, all key forms associated to (the semidegree corresponding to) $\Gamma$ are polynomials (i.e.\ regular functions on $U$).
\end{enumerate}
$\surfaceplus$ is the subcollection of $\surface$ consisting of $X \in \surface$ which satisfies in addition:
\begin{enumerate}
\setcounter{enumi}{2}
\item For each semidegree $\delta$ on $\cc[U] = \cc[x,y]$ associated to some irreducible curve on $X\setminus U$, $\delta(f) \geq 0$ for all $f \in \cc[x,y]\setminus\{0\}$ (by Theorem \ref{key-thm} this is equivalent to saying that $\delta(f_\delta) \geq 0$, where $f_\delta$ is the {\em last} key form for $\delta$).
\end{enumerate}
\end{defn}

\begin{rem}
If $X$ is a complete rational surface which dominates $\pp^2$ via a birational morphism, then it follows from Theorem \ref{my-numerical-thm} that $X \in \surface$ iff $X$ admits a system of numerical \mytype\ curvettes.
\end{rem}

\begin{rem}
It is not hard to see (e.g.\ follows from Theorem \ref{cox-thm} below) that every surface in $\surface$ is in fact projective.
\end{rem}

Now we describe an important subclass of $\surface$. Recall that in the introduction we considered collections $\scrV$ of pencils $V := \cc \Span{F,H^d}$ where 
\begin{enumerate}
\item $H$ is the equation of a straight line $L$, and
\item $F = F_1^{a_1} \cdots F_k^{a_k}$, where each $F_j$, $1 \leq j \leq k$, defines a curve with one place at infinity, and $\sum_{j=1}^k a_j \deg(F_j) = d$.
\end{enumerate}
For a $V \in \scrV$, recall that $X_V$ denotes the surface resulting from (minimal) resolutions of basepoints of $V$ on $\pp^2$. Also recall that $\scrV_1$ is the subset of $\scrV$ consisting of pencils for which $F$ itself defines an irreducible curve with one place at infinity.

\begin{prop} \label{pencil-surfaces}
\mbox{}
\begin{enumerate}
\item $X_V \in \surface$ for each $V \in \scrV$.
\item If $V \in \scrV_1$, then $X_V \in \surfaceplus$.
\end{enumerate}
\end{prop}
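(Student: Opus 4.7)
The plan is to verify the defining conditions of $\surface$ and $\surfaceplus$ from Definition \ref{polynation} by means of Theorem \ref{key-thm}. For part (1), I first identify the curves at infinity on $X_V$: they are the strict transform $\tilde L$ of $L = \{H = 0\}$ together with the exceptional divisors created in resolving the base locus of $V$, which is supported on $L \cap \bigcup_j \{F_j = 0\}$. For $\tilde L$ the associated semidegree is the usual degree in the $(x,y)$-coordinates on $\cc^2 = \pp^2 \setminus L$, whose key forms are $x$ and $y$ --- plainly polynomials. For an exceptional divisor $\Gamma$ arising in the resolution above $P_j := L \cap \{F_j = 0\}$, I claim that the generic degree-wise Puiseux series $\phi_\Gamma(x) + \xi x^{r_\Gamma}$ of $\delta_\Gamma$ is obtained by truncating at level $r_\Gamma$ the Puiseux expansion $\psi_j$ of $F_j$ at $P_j$. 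Hence I would invoke Theorem \ref{key-thm}(\ref{polynomial-key})(c) with $f := F_j$: since $F_j$ has one place at infinity, all of its degree-wise Puiseux roots are conjugates of $\psi_j$, each of which has the form $\phi_\Gamma(x) + \xi_i x^{r_\Gamma} + \ldt$ for some $\xi_i \in \cc$, matching exactly the form required by \eqref{delta-factorization}. Consequently the last (and hence every) key form of $\delta_\Gamma$ is a polynomial, giving $X_V \in \surface$.

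For part (2), with $V = \cc\Span{F, H^d} \in \scrV_1$ and $F$ irreducible having one place at infinity, part (1) already yields $X_V \in \surface$, so only the non-negativity $\delta_\Gamma(g) \geq 0$ on $\cc[x,y]$ remains. I would apply Theorem \ref{key-thm}(\ref{non-negative-key})(c) with witness $f := F$ --- which still satisfies \eqref{delta-factorization} for each $\delta_\Gamma$ by the reasoning above --- reducing the problem to the verification that $\delta_\Gamma(F) \geq 0$ for every curve $\Gamma$ at infinity. I would establish this by examining the morphism $\pi: X_V \to \pp^1$ induced by $V$: since $F$ is irreducible of degree $d$ with one place at infinity, the classical theory of such curves yields $(\tilde C)^2 = 0$ on $X_V$ (the local multiplicities squared at the infinitely near base points sum to $d^2$). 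As $\tilde C$ is irreducible and contained in $\pi^{-1}(0)$, the identity $(\tilde C)^2 = 0$ forces $\pi^{-1}(0) = \tilde C$; in particular no exceptional divisor lies in the zero fiber.

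Consequently every curve $\Gamma$ at infinity is either contained in the infinity fiber $\pi^{-1}(\infty)$ (so $\ord_\Gamma(F) < 0$ and $\delta_\Gamma(F) > 0$), or is a horizontal component along which $F$ restricts to a non-constant rational function (so $\ord_\Gamma(F) = 0$ and $\delta_\Gamma(F) = 0$). Either way $\delta_\Gamma(F) \geq 0$, so $X_V \in \surfaceplus$. The main obstacle is the truncation claim at the heart of (1) --- that the generic Puiseux series of each exceptional $\delta_\Gamma$ coincides with a truncation of $\psi_j$; this connects the valuation-theoretic description of $\delta_\Gamma$ via key forms to the inductive geometric resolution of the base point at $P_j$, where each blow-up introduces the next Puiseux approximant of $F_j$.
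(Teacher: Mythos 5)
Your proposal is correct in outline and takes a genuinely different route from the paper, especially for part (2). Where the paper invokes Abhyankar--Moh to produce the approximate roots $f_1,\ldots,f_{s+1}$ and then checks identity \eqref{delta-factorization} segment-by-segment with $f = f_j$ varying over the dual graph, you use $F_j$ itself (or $F$ in the $\scrV_1$ case) as a single uniform witness for Theorem \ref{key-thm}. This is cleaner in that it avoids selecting the appropriate approximate root for each $\Gamma$, and it also handles the paper's ``Case 2'' for general $V \in \scrV$ (monomial valuations centered at $O_j$) without needing to route through Theorem \ref{numerical-thm} as the paper does: in Case 2, $r_\Gamma$ lies below every exponent of $\psi_j$, so $[\psi_j]_{>r_\Gamma} = \psi_j$ and the witnessing coefficient $\xi_i$ in \eqref{delta-factorization} is simply $0$. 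For part (2), the contrast is more pronounced: the paper verifies $\delta_l(f_j) \geq \delta_{i_j}(f_j) \geq 0$ segment by segment, whereas you exploit the fibration $\pi \colon X_V \to \pp^1$ and the classical fact that $(\tilde C)^2 = 0$ for a curve with one place at infinity, so $\tilde C$ exhausts the fiber over $0$ and no exceptional curve can lie in it; hence every curve at infinity is either in $\pi^{-1}(\infty)$ (where $\delta_\Gamma(F) > 0$) or horizontal (where $\delta_\Gamma(F) = 0$). This is a pleasantly geometric alternative. The cost of your approach is the ``truncation claim'' — that the generic degree-wise Puiseux series of each exceptional $\delta_\Gamma$ is obtained from $\psi_j$ by truncating above $r_\Gamma$ and adding a generic $\xi x^{r_\Gamma}$ — which you rightly flag as the main thing needing proof; the paper sidesteps this by citing the Abhyankar--Moh result directly and working with the explicit curvettes $f_j$ whose roots are already known to match the truncations. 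Both proofs ultimately rest on the same Abhyankar--Moh input (in your version, through the identity $(\tilde C)^2 = 0$), but they deploy it differently.
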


\begin{proof}
Identify $U := \cc^2$ with $\pp^2 \setminus L$. Denote the irreducible components of $X_V\setminus U$ by $\Gamma_0, \ldots, \Gamma_N$, where $\Gamma_0$ is the strict transform of $L$. For each $i$, $0 \leq i \leq N$, let $\delta_i$ be the associated semidegree on $\cc[x,y] := \cc[U]$. At first assume $V \in \scrV_1$, i.e.\ $F$ defines a curve with one place at infinity. In that case dual graph of the irreducible components of  is of the form as in Figure \ref{fig1}.\\

\begin{figure}[htp]
\newcommand{\block}[2]{
 	\pgfmathsetmacro\x{0}
 	\pgfmathsetmacro\y{0}
 	
 	\draw[thick] (\x - \edge,\y) -- (\x,\y);
 	\draw[thick] (\x,\y) -- (\x+\edge,\y);
 	\draw[thick] (\x,\y) -- (\x, \y-\vedge);
 	\draw[thick, dashed] (\x, \y-\vedge) -- (\x, \y-\vedge - \dashedvedge);
 	\draw[thick] (\x, \y-\vedge - \dashedvedge) -- (\x, \y-2*\vedge - \dashedvedge);
 	
 	\fill[black] (\x - \edge, \y) circle (2pt);
 	\fill[black] (\x, \y) circle (2pt);
 	\fill[black] (\x + \edge, \y) circle (2pt);
 	\fill[black] (\x, \y- \vedge) circle (2pt);
 	\fill[black] (\x, \y- \vedge - \dashedvedge) circle (2pt);
 	\fill[black] (\x, \y- 2*\vedge - \dashedvedge) circle (2pt);
 	
 	%\draw (\x,\y )  node (center) [below right] {$\Gamma_{i'_{#1}}$};
 	\draw (\x,\y- 2*\vedge - \dashedvedge)  node (bottom3) [left] {$\Gamma_{i_{#2}}$};
}
\begin{center}
\begin{tikzpicture}
 	\pgfmathsetmacro\edge{1}
 	\pgfmathsetmacro\vedge{.75}
 	\pgfmathsetmacro\dashedvedge{1}
 	
 	\fill[black] (0, 0) circle (2pt);	
 	\draw (0,0)  node (e0) [below] {$\Gamma_0 = \Gamma_{i'_1}$};
 	\draw[thick] (0,0) -- (\edge, 0);
 	\fill[black] (\edge,0) circle (2pt);
 	%\draw (\edge,0 )  node [below] {$\Gamma_{i'_1}$};
 	\draw[thick, dashed] (\edge,0) -- (2*\edge,0);

	\begin{scope}[shift={(3*\edge,0)}]
		\block{2}{1}
	\end{scope}
	\draw (4*\edge,0 )  node [below] {$\Gamma_{i'_2}$};
	\draw[thick, dashed] (4*\edge,0) -- (5*\edge,0);
	
	\begin{scope}[shift={(6*\edge,0)}]
		\block{3}{2}
	\end{scope}	
	\draw (7*\edge,0 )  node [below] {$\Gamma_{i'_3}$};
	\draw[thick, dashed] (7*\edge,0) -- (9*\edge,0);	
	
	\begin{scope}[shift={(10*\edge,0)}]
		\block{s+1}{s}
	\end{scope}
	\draw (11*\edge,0 )  node [below] {$\Gamma_{i'_{s+1}}$};
	
	\draw[thick, dashed] (11*\edge,0) -- (12*\edge,0);	
	\fill[black] (12*\edge,0) circle (2pt);
	\draw[thick] (12*\edge,0) -- (13*\edge,0);	
	\fill[black] (13*\edge,0) circle (2pt);
 	\draw (13*\edge,0)  node (eN) [below right] {$\Gamma_{i_{s+1}} = \Gamma_N$};	
\end{tikzpicture}
\end{center}
\caption{Dual graph of curves at infinity on $X_V$ for $V \in \scrV_1$} \label{fig1}
\end{figure}

It follows from a result of Abhyankar and Moh that there are polynomials $f_1, \ldots, f_s, f_{s+1} = F|_{\cc^2}$, (where $s$ is the number of `vertical segments' in Figure \ref{fig1}), such that for each $j$, $1 \leq j \leq s+1$, the closure $C_j$ in $X_V$ of the curve $\{f_j = 0\} \subseteq \cc^2$ is a numerical $\Gamma_{i_j}$-curvette. Fix $j$, $1 \leq j \leq s+1$, and define
\begin{align*}
[\Gamma_{i'_j}, \Gamma_{i_j}] &:= 
	\begin{cases}
	\text{`L-shaped segment' between (and including) $\Gamma_{i'_j}$ and $\Gamma_{i_j}$} & \text{if}\ 1 \leq j \leq s,\\
	\text{`horizontal segment' between (and including) $\Gamma_{i'_{s+1}}$ and $\Gamma_{i_{s+1}}$} & \text{if}\ j = s+1.
	\end{cases}
\end{align*} 
It follows that for each $l$ such that $\Gamma_l \in [\Gamma_{i'_j}, \Gamma_{i_j}]$, identity \eqref{delta-factorization} is satisfied with $f = f_j$ and $\delta = \delta_l$, and therefore all the key forms of $\delta_l$ are polynomials (via assertion \ref{polynomial-key} of Theorem \ref{key-thm}). Moreover, it is not hard to compute that $\delta_l(f_j) \geq \delta_{i_j}(f_j) \geq 0$, which implies that $\delta_l$ is non-negative on all non-zero polynomials (via assertion \ref{non-negative-key} of Theorem \ref{key-thm}). This proves that $X_V \in \surfaceplus$. \\

Now consider the general case that $F = F_1^{a_1} \cdots F_k^{a_k}$ for $k \geq 1$ and positive integers $a_1, \ldots, a_k$. Then for each $i$, $0 \leq i \leq N$, one of the following (mutually exclusive) cases holds (see e.g.\ \cite[Proof of Theorem 2]{campillo-piltant-lopez-cones}): 
\begin{enumerate}[C{a}se 1.]
\item \label{case-non-contract} $\Gamma_i$ is the strict transforms of some irreducible curve on $X_{V_j}\setminus U$, $1 \leq j \leq k$, where $V_j$ is the pencil generated by $F_j$ and $H^{\deg(F_j)}$. 
\item \label{case-contract} There exists $j$, $1 \leq j \leq k$, such that $-\delta_i$ is a {\em monomial valuation} centered at $O_j$, where $O_j$ is the (unique) point at infinity on the closure $C_j$ in $X_{V_j}$ of the curve $F_j|_U = 0$. Moreover, 
\begin{enumerate}
\item $O_j$ is the point of intersection of $C_j$ and the `last exceptional curve' $\Gamma_{j,N_j}$ on $X_{V_j}$ (i.e.\ $\Gamma_{j,N_j}$ is the exceptional divisor of the last blow up performed in the resolution of the pencil $V_j$).
\item \label{O_j-interior} $O_j$ does {\em not} belong to any irreducible component of $X_{V_j}\setminus U$ other than $\Gamma_{j,N_j}$.
\end{enumerate}
\end{enumerate}
In Case \ref{case-non-contract}, we have already seen that the key forms corresponding to $\delta_i$ are polynomials. So assume Case \ref{case-contract} holds. But then it can be shown that there exists a numerical $\Gamma_i$-curvette on $X$ (see \cite[Proof of Theorem 2]{campillo-piltant-lopez-cones}\footnote{Even though \cite[Theorem 2]{campillo-piltant-lopez-cones} assumes that $\gcd(a_1, \ldots, a_k) = 1$, the proof for existence of systems of numerical curvettes on $X_V$ does not use this assumption.}). Let $\delta_{j,N_j}$ be the semidegree on $\cc[x,y]$ associated to $\Gamma_{j,N_j}$. It follows from the construction of $X_{V_j}$ that $f_j := F_j|_U$ is the last key form of $\delta_{j,N_j}$ and $\delta_{j,N_j}(f_j) = 0$. Property \ref{O_j-interior} then implies that $\delta_i$-value of the last key form of $\delta_i$ is {\em negative}. Theorem \ref{numerical-thm} then implies that the last key form of $\delta_i$ is a polynomial. It follows from Theorem \ref{key-thm} that all key forms of $\delta_i$ are polynomials, as required to complete the proof of the proposition.
\end{proof}

\section{Cox ring of $X \in \surface$} \label{coxection}

\setcounter{thm}{-1}
\begin{notation} \label{set-up}
Throughout the rest of this article we denote by $X$ a surface from the class $\surface$. Let $\Gamma_1, \ldots, \Gamma_N$ be the irreducible components of $X \setminus \cc^2$, and $\delta_i$, $1 \leq i \leq N$, be the order of pole along $\Gamma_i$. Choose coordinates $(x,y)$ on $\cc^2$ such that $\delta_i(x) > 0$ for each $i$. Let $\tilde \phi_i(x,\xi) := \phi_i(x) + \xi x^{r_i}$, $1 \leq i \leq N$, be the generic degree-wise Puiseux series of $\delta_i$. Fix $i$, $1 \leq i \leq N$. Let the {\em formal Puiseux pairs} (Definition \ref{formal-defn}) of $\tilde \phi_i$ be $(q_{i,1}, p_{i,1}), \ldots, (q_{i,l_i+1}, p_{i,l_i+1})$. Recall that the {\em characteristic exponents} (Definition \ref{formal-defn}) of $\tilde \phi_i$ are $r_{i,j} := \frac{q_{i,j}}{p_{i,1}\cdots p_{i,j}}$, $1 \leq j \leq l_i+1$. For each $j$, $0 \leq j \leq l_i$, consider the semidegree $\delta_{ij}$ with generic degree-wise Puiseux series 
\begin{align*}
\tilde \phi_{ij}(x,\xi) := [\phi_i(x)]_{>r_{i,j+1}} + \xi x^{r_{i,j+1}}, 
\end{align*}
where $[\phi_i(x)]_{>r_{i,j+1}}$ denotes the sum of all terms of $\phi_i$ with degree (in $x$) greater than $r_{i,j+1}$.
\end{notation}

Note that $\delta_{il_i} = \delta_i$. For each $j$, $0 \leq j \leq l_i$, let $f_{ij}$ be the last key form of $\delta_{ij}$. The following lemma records some properties of $f_{ij}$'s. It is an immediate corollary of Theorem \ref{key-thm}. 

\begin{lemma} \label{f-ij-lemma}
Fix $i,j$, $1 \leq i \leq N$, $0 \leq j \leq l_i$. Then
\begin{enumerate}[1.]
\let\oldenumi\theenumi
\renewcommand{\theenumi}{$\oldenumi_{(ij)}$}
\item \label{f-ij-1} $f_{ij} = 0$ is a curve with one place at infinity. 
\item \label{f-ij-2} $f_{ij}$ has a degree-wise Puiseux root $\phi_{ij}$ (which is unique up to conjugacy) such that 
\begin{enumerate}
\item Puiseux pairs of $\phi_{ij}$ are $(q_{i,1}, p_{i,1}), \ldots, (q_{i,j}, p_{i,j})$, and
\item $\phi_{ij} = [\phi_i(x)]_{>r_{i,j+1}} +$ terms with degree (in $x$) {\em less} than or equal to $r_{i,j+1}$.  \qed
\end{enumerate}
\end{enumerate}
\end{lemma}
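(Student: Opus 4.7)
My plan is to deduce both assertions of the lemma directly from Theorem \ref{key-thm} applied to the semidegree $\delta_{ij}$. Only one step is nontrivial: certifying that $f_{ij}$ is a polynomial; once this is in hand, the factorization produced by part (1) of the theorem yields both conclusions.

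\emph{Step 1 (polynomiality of $f_{ij}$).} The formal Puiseux pairs of $\tilde\phi_{ij}$ are precisely the initial segment $(q_{i,1},p_{i,1}),\ldots,(q_{i,j+1},p_{i,j+1})$ of those of $\tilde\phi_i$: the truncation $[\phi_i]_{>r_{i,j+1}}$ contributes the first $j$ of them, and the generic term $\xi x^{r_{i,j+1}}$ contributes the last. The algorithm that computes key forms from a generic degree-wise Puiseux series (\cite[Section 3.3]{contractibility}) produces one new key form per Puiseux pair and uses only the already-processed data, so the first $j+2$ key forms of $\delta_{ij}$ coincide with the first $j+2$ key forms of $\delta_i$. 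In particular $f_{ij}$ equals a key form of $\delta_i$, and this is a polynomial by the very definition of $\surface$.

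\emph{Step 2 (factorization and the two assertions).} Apply Theorem \ref{key-thm}(1) to $\delta_{ij}$ with $\phi=[\phi_i]_{>r_{i,j+1}}$ and $r=r_{i,j+1}$. Setting $p:=p_{i,1}\cdots p_{i,j}$, the polydromy order of $\phi$, the theorem produces $\phi_{ij}\in\cc((x^{-1/p}))$ satisfying
\[
\phi_{ij} \;=\; [\phi_i]_{>r_{i,j+1}} \;+\; (\text{terms of $x$-degree $\leq r_{i,j+1}$}),
\]
together with the factorization $f_{ij}=\prod_{k=1}^{p}\bigl(y-\phi_{ij}^{(k)}(x)\bigr)$, where $\phi_{ij}^{(k)}$ ranges over the conjugates of $\phi_{ij}$. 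The displayed formula is precisely assertion (\ref{f-ij-2})(b). Since every exponent of $\phi_{ij}$ has denominator dividing $p=p_{i,1}\cdots p_{i,j}$, no term of $x$-degree $\leq r_{i,j+1}$ can introduce a Puiseux pair beyond $(q_{i,j},p_{i,j})$; the essential exponents of $\phi_{ij}$ are therefore exactly $r_{i,1},\ldots,r_{i,j}$, which gives (\ref{f-ij-2})(a). Finally, since the factorization of $f_{ij}$ consists of a single Galois orbit of degree-wise Puiseux series (with no extra $x^m$ factor), the closure of $\{f_{ij}=0\}$ in the usual $\pp^2$-compactification meets the line at infinity at a single point with a single branch, yielding (\ref{f-ij-1}).

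\emph{Main obstacle.} The only genuinely delicate input is Step 1: one must know that the initial segment of the key-form sequence depends only on the initial segment of the Puiseux-pair data, so that truncating $\phi_i$ past $r_{i,j+1}$ and grafting on $\xi x^{r_{i,j+1}}$ does not disturb the first several key forms. This is built into the inductive construction of key forms but is not quite spelled out in the statement of Theorem \ref{key-thm}; once it is granted, both assertions of the lemma are pure bookkeeping on the output of Theorem \ref{key-thm}(1).
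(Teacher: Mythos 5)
Your overall strategy—apply Theorem~\ref{key-thm} to the auxiliary semidegree $\delta_{ij}$ and read off the factorization—is the right one, and it is essentially what the paper intends (the paper only says the lemma is ``an immediate corollary of Theorem~\ref{key-thm}''). Steps~2 and the derivation of assertions (2)(a), (2)(b), (1) from the factorization are sound. But Step~1 contains a concrete false claim.

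You assert that the key-form algorithm ``produces one new key form per Puiseux pair,'' and conclude that the first $j+2$ key forms of $\delta_{ij}$ coincide with the first $j+2$ key forms of $\delta_i$, so that $f_{ij}$ is the $(j+2)$-th key form of $\delta_i$. This count is contradicted by the paper's own examples: the semidegree with generic degree-wise Puiseux series $x^{5/2}+x^{-3/2}+\xi x^{-5/2}$ has $l=1$, hence two formal Puiseux pairs, but its key-form sequence $x,\ y,\ y^2-x^5,\ y^2-x^5-2x$ has four entries; the series $x^{5/2}+x^{-1}+x^{-3/2}+\xi x^{-5/2}$ again has two formal Puiseux pairs but five key forms. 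The number of key forms is governed not only by the Puiseux pairs of $\phi_i$ but also by the ``non-characteristic'' exponents appearing in it, so $f_{ij}$ need not be the $(j+2)$-th key form of $\delta_i$, and the stated coincidence of initial segments (with that indexing) is simply wrong.

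The polynomiality of $f_{ij}$ can be obtained without any claim about initial segments of key-form sequences, by using part~\ref{polynomial-key} of Theorem~\ref{key-thm} directly as an equivalence. Since $X\in\surface$, part~\ref{polynomial-key}(a)$\Rightarrow$(c) applied to $\delta_i$ gives a polynomial $f$ whose degree-wise Puiseux roots all have the form $\psi = \phi_i(x) + \xi_{i'} x^{r_i} + \ldt$. Writing $\phi_i(x) = [\phi_i]_{>r_{i,j+1}} + a\,x^{r_{i,j+1}} + \ldt$ (where $a\neq 0$ for $j<l_i$ because $r_{i,j+1}$ is a characteristic exponent of $\phi_i$, and the term $\xi_{i'}x^{r_i}$ is absorbed into the lower-order tail since $r_i\le r_{i,j+1}$), each such root is also of the form $[\phi_i]_{>r_{i,j+1}} + \xi' x^{r_{i,j+1}} + \ldt$. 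Thus the \emph{same} polynomial $f$ witnesses~\eqref{delta-factorization} for $\delta_{ij}$, and part~\ref{polynomial-key}(c)$\Rightarrow$(b) for $\delta_{ij}$ shows $f_{ij}$ is a polynomial. (For $j=l_i$ the claim is trivial since $\delta_{il_i}=\delta_i$.) With this repair, your Step~2 then completes the proof as written.
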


\begin{rem} \label{one-place-f-ij}
Assume that $X = X_V$ for some $V = \cc\Span{F,H^d} \in \scrV$, where $F := F_1^{a_1} \cdots F_k^{a_k}$ such that $f_j := F_j|_{\cc^2}$ defines a curve with one place at infinity for each $j$, $1 \leq j \leq k$. Let $\{g_{i'j'}\}_{j'}$ be the {\em approximate roots} \cite{abhya-moh-tschirnhausen} of $f_{i'}$, $1 \leq i' \leq k$. Then $\{g_{i',j'}\}_{i',j'}$ satisfies the properties of $\{f_{ij}\}_{i,j}$ of Lemma \ref{f-ij-lemma}, i.e.\ for each $i,j$, $1 \leq i \leq N$, $0 \leq j \leq l_i$, there exists $i',j'$ such that properties \ref{f-ij-1} and \ref{f-ij-2} of Lemma \ref{f-ij-lemma} hold with $f_{ij} := g_{i'j'}$.
\end{rem}

\begin{defn} \label{coxinition}
The {\em Cox ring}, or the {\em total coordinate ring}, of $X$ is 
\begin{align*}
\scrR(X) := \dsum_{(d_1, \ldots, d_N) \in \zz^N} \Gamma\left(X, \sheaf\left(\sum d_j \Gamma_j\right)\right)
\end{align*}
For $\vec d := (d_1, \ldots, d_N) \in \zz^N$ and $f \in \Gamma\left(X, \sheaf(\sum d_j \Gamma_j)\right)$, we denote by $(f)_{\vec d}$ the `copy' of $f$ in the $\vec d$'th graded component of $\scrR(X)$. Let $\vec e_j$ be the $j$-th unit vector in $\zz^N$. Moreover, for $f \in \cc(x,y)$ we denote by $\vec\delta(f)$ the element $(\delta_1(f), \ldots, \delta_N(f)) \in \zz^N$.
\end{defn}

\begin{convention}
For each $\vec d := (d_1, \ldots, d_N) \in \zz^N$ and $f \in \Gamma\left(X, \sheaf(\sum d_j \Gamma_j)\right)$, we identify $f$ with $f|_{\cc^2} \in \cc[x,y]$. 
\end{convention}

\begin{thm} \label{cox-thm}
Pick any collection of (not necessarily pairwise distinct) polynomials $f_{ij}$, $1 \leq i \leq N$, $0 \leq j \leq l_i$, which satisfy properties \ref{f-ij-1} and \ref{f-ij-2} of Lemma \ref{f-ij-lemma}. Let $N' := \sum_{i=0}^N (l_i + 1)$. For all $ \vec \gamma := (\gamma_{ij})_{i,j} \in \zz_{\geq 0}^{N'}$ and $\alpha, \beta \in \zz_{\geq 0}$, define $\gabgamma := x^\alpha y^{\beta} \prod_{i,j}f_{ij}^{\gamma_{ij}} \in \cc[x,y]$.
\begin{enumerate}
\item \label{cox-assertion} $\vecdelta{f_{ij}}$'s together with $(x)_{\vec \delta(x)}$, $(y)_{\vec \delta(y)}$, and $(1)_{\vec e_j}$, $1 \leq j \leq N$, generate $\scrR(X)$ as a $\cc$-algebra.
\item \label{line-bundle-assertion} Let $\vec d := (d_1, \ldots, d_N) \in \zz^N$. Let $\scrE_{\vec d}$ be the collection of all $(\alpha, \beta, \vec \gamma) \in \zz_{\geq 0} \times \zz_{\geq 0} \times \zz_{\geq 0}^{N'}$ such that 
\begin{align}
\alpha \delta_k(x) + \beta\delta_k(y) + \sum_{i,j}\gamma_{ij}\delta_k(f_{ij}) \leq d_k \tag{$*_k$} \label{h0-k}
\end{align}
is satisfied for all $k$, $1 \leq k \leq N$. Then $\Gamma\left(X, \sheaf(\sum d_j \Gamma_j)\right)$ is a generated as a vector space over $\cc$ by $\{\gabgamma: (\alpha, \beta, \vec \gamma)\in \ed\} $.
\item \label{basis-assertion} Let $\vec d := (d_1, \ldots, d_N) \in \zz^N$. Define
\begin{align*}
\scrE'_{\vec d} 
	&:= \{(a,b) \in \zz_{\geq 0}^2: b = \beta + \sum_{i,j}\gamma_{ij}\deg_y(f_{ij}) = \deg_y(g_{a,\beta,\vec\gamma})\\ 
	& \qquad \quad \text{for some $\beta \in \zz_{\geq 0}$ and $\vec\gamma = (\gamma_{ij})_{i,j} \in \zz_{\geq 0}^{N'}$ such that}\ (a, \beta, \vec\gamma)\in \scrE_{\vec d}\}. 	
\end{align*} 
Then $\dim \Gamma\left(X, \sheaf(\sum d_j \Gamma_j)\right) = |\scrE'_{\vec d}|$. For each $(a,b) \in \scrE'_{\vec d}$, pick any $\beta \in \zz_{\geq 0}$ and $\vec\gamma \in \zz_{\geq 0}^{N'}$ such that $(a,\beta,\vec \gamma) \in \ed$ and $b = \deg_y (g_{a,\beta,\vec \gamma})$, and define $g_{a,b} := g_{a,\beta,\vec \gamma}$. Then $\{g_{a,b}: (a,b) \in \scrE'_{\vec d}\}$ is a basis of $\Gamma\left(X, \sheaf(\sum d_j \Gamma_j)\right)$.
\end{enumerate}
\end{thm}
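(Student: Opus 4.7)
I would organize the proof around assertion (2), from which (1) follows immediately via the graded algebra structure of $\scrR(X)$ and (3) follows by a leading $y$-term argument. For the easy direction of (2), the fact that each $\delta_k$ is a valuation gives
$$\delta_k(g_{\alpha\beta\vec\gamma}) = \alpha\delta_k(x) + \beta\delta_k(y) + \sum_{i,j}\gamma_{ij}\delta_k(f_{ij}) \leq d_k$$
by the hypothesis \eqref{h0-k}, so $g_{\alpha\beta\vec\gamma} \in \Gamma(X,\sheaf(\sum d_j\Gamma_j))$ whenever $(\alpha,\beta,\vec\gamma)\in\ed$. To deduce (1) once (2) is in hand, any $(f)_{\vec d} \in \scrR(X)$ decomposes as a sum of terms $c_\tau(g_{\alpha_\tau\beta_\tau\vec\gamma_\tau})_{\vec d}$, and each such term factors in $\scrR(X)$ as $(x)_{\vec\delta(x)}^{\alpha_\tau}(y)_{\vec\delta(y)}^{\beta_\tau}\prod_{i,j}(f_{ij})_{\vec\delta(f_{ij})}^{\gamma_{\tau,ij}}\prod_k (1)_{\vec e_k}^{c_{\tau,k}}$, with the exponents $c_{\tau,k} := d_k - \alpha_\tau\delta_k(x) - \beta_\tau\delta_k(y) - \sum_{i,j}\gamma_{\tau,ij}\delta_k(f_{ij}) \geq 0$ being the slacks in \eqref{h0-k}.

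The heart of the argument is the following strengthening of the nontrivial direction of (2): \textbf{Main Lemma.} \emph{Every nonzero $f \in \cc[x,y]$ admits a decomposition $f = \sum_\tau c_\tau g_{\alpha_\tau\beta_\tau\vec\gamma_\tau}$ with $c_\tau \in \cc^\ast$ and $(\alpha_\tau,\beta_\tau,\vec\gamma_\tau) \in \scrE_{\vec\delta(f)}$ for every $\tau$.} Any $f \in \Gamma(X,\sheaf(\sum d_j\Gamma_j))$ is a polynomial with $\vec\delta(f) \leq \vec d$ coordinatewise, so $\scrE_{\vec\delta(f)} \subseteq \ed$ and the spanning part of (2) follows from the Main Lemma. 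I would prove the Main Lemma by induction on $\deg_y(f)$: applying Theorem \ref{dpuiseux-factorization} to write $f = c\, x^m\prod_s(y-\psi_s(x))$, I classify each Puiseux root $\psi_s$ by the maximal pair $(i,j)$ such that $\psi_s$ agrees with $\phi_{ij}$ past the $j$-th characteristic exponent of $\phi_i$ (Lemma \ref{f-ij-lemma}). Grouping roots by type and invoking the characterization of polynomial key forms in Theorem \ref{key-thm} to recombine each group as a factor $f_{ij}^{\gamma_{ij}}$ assembles a monomial $g_0 = g_{\alpha_0\beta_0\vec\gamma_0}$ whose leading $y$-term coincides with that of $f$ up to a scalar $c_0 \in \cc^\ast$. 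Then $f - c_0 g_0$ has strictly smaller $y$-degree, and induction applies.

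The principal obstacle is verifying that the monomial $g_0$ so produced lies in $\scrE_{\vec\delta(f)}$, i.e.\ that $\delta_k(g_0) \leq \delta_k(f)$ \emph{simultaneously} for every $k \in \{1,\ldots,N\}$---not merely for the $\delta_k$'s naturally associated with the root-grouping. This simultaneous-valuation issue is resolved by the truncation property in the second part of Lemma \ref{f-ij-lemma}: since each $\phi_{ij}$ agrees with $\phi_i$ on all terms of $x$-degree exceeding $r_{i,j+1}$, the formula \eqref{puiseux-degree} together with Proposition \ref{puiseuxfinity} computes $\delta_k(y - \psi_s)$ from the degree of contact of $\psi_s$ with $\phi_k$, and the recombination into $g_0$ preserves these contact orders with every $\phi_k$ simultaneously---yielding the required componentwise inequality $\vec\delta(g_0) \leq \vec\delta(f)$. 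Finally, (3) follows from (2) by a leading-term analysis: each $f_{ij}$ is monic in $y$ of degree $p_{i,1}\cdots p_{i,j}$ (from Lemma \ref{f-ij-lemma} combined with the Puiseux factorization in Theorem \ref{key-thm}), so $g_{\alpha\beta\vec\gamma}$ has leading $y$-monomial $x^\alpha y^{\deg_y(g_{\alpha\beta\vec\gamma})}$; hence any choice of representatives $\{g_{a,b}\}_{(a,b)\in\eprimed}$ has pairwise distinct leading monomials (giving linear independence), while (2) combined with induction on $\deg_y$ shows that these representatives also span, yielding $\dim\Gamma(X,\sheaf(\sum d_j\Gamma_j)) = |\eprimed|$.
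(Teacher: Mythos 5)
Your proposal is correct and mirrors the paper's argument step for step: reduce everything to assertion (2), induct on $\deg_y$, use the degree-wise Puiseux factorization to group conjugate roots, replace each group by a suitable power of a matching $f_{ij}$ (or of $y$) determined by truncating the root above the best contact level, and verify the componentwise inequality $\vec\delta(g_0)\le\vec\delta(f)$ so that $f-c_0 g_0$ remains in the same graded piece with strictly smaller $y$-degree. The one inaccuracy is the claim that the recombination ``preserves the contact orders with every $\tilde\phi_k$'': it need not --- the replacement root can have strictly better agreement with some $\tilde\phi_k$ than the original $\psi_i$ had --- and what one actually proves (as the paper does, via a separate claim together with the comparison lemma \oldref{delta-comparison} relating $\delta$-values of conjugate-products to degrees of contact) is only the one-sided estimate $s'_{ij}\le s_{ij}$, which still gives the needed $\vec\delta(g_0)\le\vec\delta(f)$ since smaller degree of difference means smaller $\delta_k$-value.
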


\begin{proof}
Note that for all $(\alpha, \beta, \vec \gamma) \in \scrE_{\vec d}$, we have
\begin{align*}
\vec \delta\left(\gabgamma\right) 
	&= \vec \delta\left(x^\alpha y^{\beta} \prod_{i,j}f_{ij}^{\gamma_{ij}}\right)
	= \alpha \vec \delta(f) + \beta \vec\delta(y) + \sum_{i,j}\vec \delta(f_{ij}) \leq \vec d
\end{align*}
(where ``$\leq$'' denotes coordinate-wise inequality). In particular,
\begin{align}
(\alpha, \beta, \vec \gamma) \in \scrE_{\vec d} \iff \gabgamma \in \Gamma\left(X, \sheaf(\sum d_j \Gamma_j)\right). \label{Ed-interpretation}
\end{align}
This implies that assertion \ref{cox-assertion} follows from assertion \ref{line-bundle-assertion}. Moreover, assertion \ref{basis-assertion} also is a corollary of assertion \ref{line-bundle-assertion}. Therefore it suffices to prove assertion \ref{line-bundle-assertion}.\\ 

Pick $\vec d \in \zz^N$ and $h \in \Gamma\left(X, \sheaf(\sum d_j \Gamma_j)\right)$. We will show by induction on $\deg_y(h)$ that $h$ belongs to the vector space generated by $\scrG_{\vec d} := \{\gabgamma: (\alpha, \beta, \vec \gamma)\in \ed\} $. Indeed, if $\deg_y(h) = 0$, then it is clear. So assume $\deg_y(h) \geq 1$, and consider the degree-wise Puiseux factorization of $h$:
\begin{align*}
h = cx^m \prod_{i=1}^M \prod_{\parbox{1.75cm}{\scriptsize{$\psi_{ij}$ is a con\-ju\-ga\-te of $\psi_i$}}}\mkern-27mu \left(y - \psi_{ij}(x)\right).
\end{align*}
Fix $i$, $1 \leq i \leq M$. For each $j,j'$, $1 \leq j \leq N$, $1 \leq j' \leq d^*_i$ (where $d^*_i$ is the number of conjugates of $\psi_i$), define 
\begin{align*}
s_{ij'j} &:=\deg_x(\psi_{ij'}(x) - \tilde \phi_{j}(x,\xi)) = \deg_x(\psi_{ij'}(x) - \phi_{j}(x) - \xi x^{r_j}),\\
s_i &:= \min\{s_{ij'j}\}_{j',j}.
\end{align*}
Pick $j'_i$, $j_i$ such that $s_i = s_{ij'_ij_i}$. We may assume w.l.o.g.\ that $j'_i = 1$. Let 
\begin{align*}
\psi^*_i &:= [\psi_{i1}(x)]_{> s_i} = [\phi_{j_i}(x)]_{> s_i}.
\end{align*}
For each $k$, $0 \leq k \leq l_{j_i}$, let $\phi_{j_ik}$ be a degree-wise Puiseux root of $f_{j_ik}$ (recall that $f_{j_ik}$'s satisfy properties \ref{f-ij-1} and \ref{f-ij-2} of Lemma \ref{f-ij-lemma}). Since $s_i \geq r_{j_i}$, it follows that if $\psi^*_i \neq 0$, then there is a unique $k_i$, $0 \leq k_i \leq l_{j_i}$, such that
\begin{enumerate}[(a)]
\item \label{phi-j-i-k-i-1} $\phi_{j_ik_i}$ has the same Puiseux pairs as $\psi^*_i$, and
\item \label{phi-j-i-k-i-2} a conjugate of $\phi_{j_ik_i}$ is of the form $\psi^*_i +$ terms with degree less than or equal to $s_i$.
\end{enumerate}
Define
\begin{align*}
h_i := \prod_{\parbox{1.75cm}{\scriptsize{$\psi_{ik}$ is a con\-ju\-ga\-te of $\psi_i$}}}\mkern-27mu \left(y - \psi_{ik}(x)\right) 	
	\in \dpsxc[y].
\end{align*}
Note that $h = \prod_{i=1}^M h_i$ and $\deg_y(h_i) = $ number of conjugates of $\psi_i$. Let
\begin{align*}
m_i := \begin{cases}
	\deg_y(h_i)& \text{if}\ \psi^*_i = 0,\\
	\frac{\text{number of conjugates of $\psi_i$}}{\text{number of conjugates of $\psi^*_i$}} & \text{otherwise.}
	\end{cases} 
	\qquad
h^*_{i} := 
	\begin{cases}
	y^{m_i} & \text{if}\ \psi^*_i = 0,\\
	f_{j_ik_i}^{m_i} & \text{otherwise.}
	\end{cases}
\end{align*}
Note that $m_i$ is a positive integer and $h^*_{i} \in \cc[x,y]$. Define 
$$h^* := cx^m \prod_{i=1}^M h^*_i \in \cc[x,y].$$

\begin{proclaim}\label{delta-j-f*-f}
$\delta_j(h^*) \leq \delta_j(h)$ for each $j$, $1 \leq j \leq N$.
\end{proclaim}

\begin{proof}
Fix $i,j$, $1 \leq i \leq M$, $1 \leq j \leq N$. It suffices to show that $\delta_j(h^*_i) \leq \delta_j(h_i)$. As in \eqref{puiseux-degree}, 
\begin{align*}
\delta_j(h_i) 
	&= \delta_j(x)\deg_x\left(h_i(x,y)|_{y = \tilde \phi_j(x,\xi)}\right)\\
	&= \delta_j(x)	\deg_x\left(\prod_{k =1}^{d^*_i} (\tilde \phi_j(x,\xi) - \psi_{ik}(x))\right) = \delta_j(x)\sum_{k=1}^{d^*_i} s_{ikj},
\end{align*}
where $d^*_i := \deg_y(h_i) = \deg_y(h^*_i)$. If $\psi^*_i = 0$, then $s_{ikj} \geq \deg_x(\tilde \phi_j(x,\xi))$ for all $k$, which implies that 
\begin{align*}
\delta_j(h_i) \geq \delta_j(x) d^*_i \deg_x(\tilde \phi_j(x,\xi)) = d^*_i\delta_j(y) = \delta_j(y^{d^*_i}) = \delta_j(h^*_i),
\end{align*}
as required to prove the claim. So assume $\psi^*_i \neq 0$. Define 
\begin{align*}
s_{ij} &:= \min\{s_{ij'j}\}_{j'}.\\
s'_{ij} &:=\min\{\deg_x(\tilde \phi_{j}(x,\xi) - \phi(x)): \phi\ \text{is a conjugate of}\ \phi_{j_ik_i} \}. 
\end{align*}
Claim \ref{claim-sij-s'ij} and Lemma \ref{delta-comparison} below then imply that $\delta_j(h_i) \geq \delta_j(h^*_i)$, as required to prove Claim \ref{delta-j-f*-f}.
\end{proof}

\begin{proclaim} \label{claim-sij-s'ij}
$s_{ij} \geq s'_{ij}$.
\end{proclaim}

\begin{proof}
Indeed, replacing $\phi_j$ by one of its conjugates if necessary, we may assume that $s_{ij} = s_{i1j}$. Similarly, property \eqref{phi-j-i-k-i-2} of $\phi_{j_ik_i}$ implies that replacing $\phi_{j_ik_i}$ by one of its conjugates if necessary, we may assume that $\phi_{j_ik_i}$ is of the form $[\psi_{i1}(x)]_{> s_i} +$ terms with degree less than or equal to $s_i$. Since $s_{ij} \geq s_i$, this implies that $\deg_x(\tilde \phi_{j}(x,\xi) - \phi_{j_ik_i}(x)) \leq s_{ij}$, as required.
%
%Indeed, $[\phi_j]_{>s_{ij}} = [\psi_{i1}(x)]_{> s_{ij}} = [\phi_{j_ik_i}]_{>s_{ij}} =: \gamma(x)$. Then there are two possible cases:\\
%
%Case 1: $\psi_{i1}(x) = \gamma(x) +$ terms with $x$-degree less than $s_{ij}$. Then $\phi_{j_ik_i}(x)$ is also of the form $\gamma(x) +$ terms with $x$-degree less than $s_{ij}$, and $\deg_x(\tilde phi_j(x,\xi) - \gamma(x)) = s_{ij}$, which implies that $\deg_x(\tilde \phi_{j}(x,\xi) - \phi_{j_ik_i}(x)) = s_{ij}$. \\
%
%Case 2: $\psi_{i1}(x) = \gamma(x) + cx^{s_{ij}} + $ terms with $x$-degree less than $s_{ij}$, where $c \neq 0$. Then $\tilde phi_{j}(x,\xi) = \gamma(x) + c'x^{s_{ij}} + $ terms with $x$-degree less than $s_{ij}$, where $c' \neq c$. If $s_{ij} > s_i$, then $\phi_{j_ik_i}(x)$ is also of the form $ \gamma(x) + cx^{s_{ij}} + $ terms with $x$-degree less than $s_{ij}$ and $\deg_x(\tilde \phi_{j}(x,\xi) - \phi_{j_ik_i}(x)) = s_{ij}$. On the other hand, if $s_{ij} = s_i$, then $\phi_{j_ik_i}(x)$ is of the form $ \gamma(x) + c''x^{s_{ij}} + $ terms with $x$-degree less than $s_{ij}$, where $c'' \in \cc$, so that $\deg_x(\tilde \phi_{j}(x,\xi) - \phi_{j_ik_i}(x)) \leq s_{ij}$, as required.
\end{proof}
Claim \ref{delta-j-f*-f} implies that $h^* \in \Gamma\left(X, \sheaf(\sum d_j\Gamma_j)\right)$, so that $h - h^*$ is also an element of $\Gamma\left(X, \sheaf(\sum d_j \Gamma_j)\right)$. Since $\deg_y(h-h^*) < \deg_y(h)$, the inductive hypothesis implies that $h-h^*$ is in the vector space generated by $\scrG_{\vec d}$. Now note that by construction $h^* = c\gabgamma$ for some $\alpha, \beta, \vec \gamma$. Claim \ref{delta-j-f*-f} and \eqref{Ed-interpretation} then imply that $\gabgamma \in \scrG_{\vec d}$, and complete the proof of the theorem.
\end{proof}

\section{Global Enriques and Zariski semigroups of divisors on $X \in \surface$} \label{linection}
Global Enriques and Zariski semigroups of line bundles on surfaces of the form $X_V$, where $V$ is a pencil at infinity, were introduced in \cite{campillo-piltant-lopez-cones-surfaces}. In this section we adapt these notions for surfaces in $\surface$ and then compute these semigroups. We continue with the set up introduced in Notation \ref{set-up}.

\begin{defn}
Recall that the divisor class group $\cl(X)$ (of Weil divisors on $X$ modulo linear equivalence) is isomorphic to $\zz^N$ and generated by $\Gamma_1, \ldots, \Gamma_N$. The {\em global Enriques semigroup} $\pst(X)$ of $X$ consists of all $D \in \cl(X)$ such that $D$ is linearly equivalent to an effective Weil divisor whose support does not contain any of the $\Gamma_j$'s. The {\em global Zariski semigroup}, or the {\em characteristic semigroup} of $X$ is the semigroup $\tilde P(X)$ generated by all divisors in $\cl(X)$ which are base point free. The {\em Zariski semigroup at infinity} $\tilde P_\infty$ is generated by all divisiors in $\cl(X)$ which have no base point `at infinity', i.e.\ the base locus consists of finitely many points on $\cc^2$. 
%; in other words, 
%\begin{align*}
%\tilde P(X) = \left\{D \in \tilde P(X): D \sim \sum a_j \Gamma_j,\ a_j \geq 0\ \text{for all}\ j,\ 1 \leq j \leq N\right\}.
%\end{align*}
\end{defn}

\begin{rem} \label{positively-zariski-infinite}
If $X \in \surfaceplus$ (e.g.\ if $X = X_V$ for some $V \in \scrV_1$) then $\tilde P_\infty = \tilde P$. 
\end{rem}

For $\vec d \in \zz^N$, we denote by $\Gammad$ the divisor $\sum_j d_j\Gamma_j \in \cl(X)$. Theorem \ref{cox-thm} gives an immediate characterization of those $\vec d$ for which $\Gammad \in \pstx$. 

\begin{cor} \label{enricorollary}
Let $f_{ij}$, $1 \leq i \leq N$, $0 \leq j \leq l_i$, be as in Theorem \ref{cox-thm}. For each $\vec d \in \zz^N$ and each $j$, $1 \leq j \leq N$, define: 
\begin{align*}
M_{\vec d,j} 
	&:= \max\left\{\delta_j(\gabgamma): (\alpha,\beta,\vec \gamma) \in \ed\right\} \\
	&= \max\left\{\alpha \delta_j(x) + \beta\delta_j(y) + \sum_{i,j}\gamma_{ij}\delta_j(f_{ij}): (\alpha,\beta,\vec \gamma) \in \ed\right\}.
\end{align*}
Then $\Gammad \in \pstx$ iff $d_j = M_{\vec d,j}$ for all $j$, $1 \leq j \leq N$. \qed
\end{cor}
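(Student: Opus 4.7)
The plan is to translate membership in $\pst(X)$ into an existence statement about a global section, and then read that statement off of the explicit basis provided by Theorem \ref{cox-thm}.

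First I would unwind the definition of the global Enriques semigroup. By definition, $\Gamma_{\vec d} \in \pstx$ iff there exists an effective divisor $D \sim \Gamma_{\vec d}$ whose support avoids every $\Gamma_j$. Writing $D = (f) + \Gamma_{\vec d}$ for some $f \in \cc(X)$, effectiveness at the prime divisors in $\cc^2$ forces $f$ to be a polynomial (hence an element of $\cc[x,y]$), while effectiveness at each $\Gamma_j$ translates, using $\ord_{\Gamma_j}(f) = -\delta_j(f)$, into $\delta_j(f) \leq d_j$; the additional requirement that $\Gamma_j \not\subseteq \supp(D)$ then upgrades this to equality. So the condition $\Gamma_{\vec d} \in \pstx$ is equivalent to the existence of a polynomial $f \in \Gamma(X, \sheaf(\Gamma_{\vec d}))$ with $\delta_j(f) = d_j$ simultaneously for every $j$, $1 \leq j \leq N$.

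Next I would apply Theorem \ref{cox-thm}\eqref{line-bundle-assertion}: every such $f$ is a $\cc$-linear combination of the generators $\gabgamma$ with $(\alpha, \beta, \vec \gamma) \in \ed$, and each generator satisfies $\delta_j(\gabgamma) \leq d_j$. The \emph{only if} direction follows immediately from the semidegree (ultrametric) inequality: writing $f = \sum c_i \gabgamma[\alpha_i\beta_i\vec\gamma_i]$,
\begin{align*}
d_j \;=\; \delta_j(f) \;\leq\; \max_i \delta_j(\gabgamma[\alpha_i\beta_i\vec\gamma_i]) \;\leq\; M_{\vec d, j} \;\leq\; d_j,
\end{align*}
forcing $M_{\vec d, j} = d_j$ for every $j$.

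For the \emph{if} direction, suppose $d_j = M_{\vec d, j}$ for every $j$. For each $j$ pick $(\alpha_j, \beta_j, \vec\gamma_j) \in \ed$ achieving the maximum, and consider the linear combination $f = \sum_{j=1}^N c_j \gabgamma[\alpha_j\beta_j\vec\gamma_j]$. For a fixed $j$, the set of coefficient vectors $(c_1, \ldots, c_N) \in \cc^N$ for which $\delta_j(f) < \max_k \delta_j(\gabgamma[\alpha_k\beta_k\vec\gamma_k])$ is cut out by the vanishing of the ``leading form'' at $\delta_j$, hence is a proper Zariski-closed subset. The union of these $N$ bad loci is still a proper subset of $\cc^N$, so a generic choice of $(c_1, \ldots, c_N)$ yields an $f$ with $\delta_j(f) = \max_k \delta_j(\gabgamma[\alpha_k\beta_k\vec\gamma_k]) = d_j$ for all $j$ simultaneously; this $f$ lies in $\Gamma(X, \sheaf(\Gamma_{\vec d}))$ (since all the generators do) and witnesses $\Gamma_{\vec d} \in \pstx$. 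The only mild subtlety is the simultaneous choice of generic coefficients, which is why I single it out above; everything else is a direct unpacking of Theorem \ref{cox-thm}.
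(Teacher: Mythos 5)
Your proof is correct and follows the same route the paper intends: the paper treats this corollary as an immediate consequence of Theorem~\ref{cox-thm}\eqref{line-bundle-assertion} (it states it with a ``\qed'' and no separate argument), and your unwinding of the Enriques condition into the existence of a polynomial $f\in\Gamma(X,\sheaf(\Gammad))$ with $\delta_j(f)=d_j$ for all $j$, combined with the ultrametric inequality for the ``only if'' direction and a genericity argument for the ``if'' direction, is precisely the reasoning being elided.
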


%The rest of this section is devoted to computation of $\tilde P_\infty(X)$. 

\begin{defn} \label{ndefn}
Fix $i$, $1 \leq j \leq N$. Let $\tilde \delta_i := \delta_{i}/\delta_{i}(x)$ be the `normalized' version of $\delta_i$. Recall that the {\em formal Puiseux pairs} (Definition \ref{formal-defn}) of $\tilde \phi_i$ are $(q_{i,1}, p_{i,1}), \ldots, (q_{i,l_i+1}, p_{i,l_i+1})$. Define:
\begin{align*}
p_{i} &:= p_{i,1} \cdots p_{i,l_i},\\
\tilde p_{i} &:= \delta_i(x) = p_{i}p_{i,l_i +1} = p_{i,1} \cdots p_{i, l_i + 1}.
\end{align*}
Let $f_{ij}$, $1 \leq i \leq N$, $0 \leq j \leq l_i$, as in Theorem \ref{cox-thm}. For each $i,j$, let $\phi_{ij}$ be a degree-wise Puiseux root of $f_{ij}$. Fix $i$, $1 \leq i \leq N$, and define $\scrN_i := \{i': \tilde \delta_{i'} \leq \tilde \delta_{i}\}$. Recall (Property \ref{f-ij-2} implies) that $f_{il_i}$ has a generic degree-wise Puiseux root $\phi_{il_i}(x)$ such that 
\begin{align*}
\phi_{il_i} = [\phi_i(x)]_{>r_{i}} + \ \text{terms with degree (in $x$) {\em less} than or equal to}\ r_{i}
\end{align*}
For each $i' \in \scrN_i$, (replacing $\phi_{i'}$ by a conjugate if necessary we may assume that) there exists $c_{ii'} \in \cc$ such that 
\begin{align}
\phi_{i'l_{i'}} = [\phi_i(x)]_{> r_i} + c_{ii'}x^{r_i} + \ldt \label{c-ii'}
\end{align}
Note that $c_{ii'}$ is unique only up to a multiplication by a $p_{i,l_i+1}$-th root of unity. For each $c \in \cc$, define
\begin{align*}
\scrN_{i,c} &:= \left\{i' \in \scrN_i\setminus\{i\}: c_{ii'}^{p_{i,l_i+1}} = c \right\}.
\end{align*}
Then $\scrN_{i}= \bigcup_{c \in \cc}\scrN_{i,c} \cup \{i\}$. For convenience we record the following lemma which is a direct consequence of Lemma \ref{f-ij-lemma}:
\end{defn}

\begin{lemma} \label{c-ii-lemma}
If $c_{ii} \neq 0$, then $p_{i,l_i+1} = 1$. In particular, identity \eqref{c-ii'} for $i := i'$ uniquely determines $c_{ii}$. \qed
\end{lemma}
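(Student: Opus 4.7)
The plan is to extract both assertions from the defining identity \eqref{c-ii'} for $i'=i$ together with the explicit form of $\phi_{il_i}$ recorded in Lemma \ref{f-ij-lemma}. Recall from property \ref{f-ij-2} of that lemma that $\phi_{il_i}$ has Puiseux pairs $(q_{i,1},p_{i,1}),\ldots,(q_{i,l_i},p_{i,l_i})$, so its polydromy order is exactly $p_i = p_{i,1}\cdots p_{i,l_i}$, and consequently $\phi_{il_i}\in\cc((x^{-1/p_i}))$. Thus every exponent of a nonzero term of $\phi_{il_i}$ lies in $\tfrac{1}{p_i}\zz$.

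First I would unpack the identity $\phi_{il_i} = [\phi_i(x)]_{>r_i} + c_{ii}\,x^{r_i} + \ldt$. The exponent $r_i$ here equals the next characteristic exponent $r_{i,l_i+1} = q_{i,l_i+1}/(p_{i,1}\cdots p_{i,l_i+1}) = q_{i,l_i+1}/(p_i\,p_{i,l_i+1})$. If $c_{ii}\neq 0$, then by the previous paragraph $r_i$ must lie in $\tfrac{1}{p_i}\zz$, i.e.\ $p_i r_i = q_{i,l_i+1}/p_{i,l_i+1}\in\zz$. By the coprimality condition built into a Puiseux pair (cf.\ Definition \ref{formal-defn}) we have $\gcd(q_{i,l_i+1},p_{i,l_i+1})=1$, and therefore this divisibility forces $p_{i,l_i+1}=1$. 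This gives the first assertion.

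For the second assertion, the remark at the end of Definition \ref{ndefn} says that the ambiguity in $c_{ii}$ (coming from the freedom to replace $\phi_{il_i}$ by a conjugate compatible with $[\phi_i(x)]_{>r_i}$) is multiplication by a $p_{i,l_i+1}$-th root of unity. Once we know $p_{i,l_i+1}=1$, this group of roots of unity is trivial, and so the value of $c_{ii}$ produced by \eqref{c-ii'} is uniquely determined.

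The only subtle point — and the step I would take most care with — is invoking Lemma \ref{f-ij-lemma} correctly to guarantee that the polydromy order of $\phi_{il_i}$ is \emph{exactly} $p_i$ rather than some proper divisor; otherwise the exponent constraint $r_i\in\tfrac{1}{p_i}\zz$ would be weaker and the coprimality argument above would not immediately conclude. Everything else reduces to the one-line divisibility computation sketched above.
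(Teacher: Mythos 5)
Your proof is correct and follows the intended route: the paper records this lemma with a $\qed$, treating it as an immediate consequence of Lemma~\ref{f-ij-lemma}, and the reasoning you spell out — polydromy order of $\phi_{il_i}$ is exactly $p_i$ by Lemma~\ref{f-ij-lemma}, so $c_{ii}\neq 0$ forces $r_i\in\frac{1}{p_i}\zz$, and coprimality of the formal Puiseux pair $(q_{i,l_i+1},p_{i,l_i+1})$ then gives $p_{i,l_i+1}=1$ — is precisely that consequence. The only minor point you leave implicit is that when $c_{ii}=0$ uniqueness is trivial, so the ``in particular'' clause covers all cases, but this is obvious and not a gap.
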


\begin{reminition} \label{lc-remark}
For each $f \in \cc[x,y]\setminus\{0\}$, and each $i$, $1 \leq i \leq N$, we write $\lc_{i}(f)$ for the coefficient of $x^{\tilde \delta_{i}(f)}$ in $f|_{y = \tilde \phi_{i}(x,\xi)}$. Note that $\lc_i(f)$ can be factored as $\xi^r h(\xi^{p_{i,l_i+1}})$, where 
\begin{enumerate}
\item $r$ is the number of degree-wise Puiseux roots $\psi(x)$ of $f$ such that $\deg_x(\phi_i - \psi(x)) < r_i$,
\item $h$ is a polynomial in one variable,
\item $p_{i,l_i+1}\deg(h)$ is the number of degree-wise Puiseux roots $\psi(x)$ of $f$ such that $\deg_x(\phi_i - \psi(x)) = r_i$.
\end{enumerate}
\end{reminition}

\begin{thm} \label{zariski-thm}
For $\vec a := (a_1, \ldots, a_N) \in \zz_{\geq 0}^N$, define
\begin{align*}
d_i(\vec a) &:= \sum_{j=1}^N a_j\omega_{ij},\ 1 \leq i \leq N,\ \text{where}\\
\omega_{ij} &:= 
	\begin{cases}
		p_{j,l_j+1}\delta_i(f_{jl_j}) & \text{if}\ i \not\in \scrN_j\\
		p_{j,l_j+1}\tilde p_i \tilde \delta_j(f_{jl_j}) & \text{if}\ i \in \scrN_j.
	\end{cases}\\
\vec d_{\vec a} &:= (d_1(\vec a), \ldots, d_N(\vec a)), \\
m_{i}(\vec a) &:= \sum_{i' \in \scrN_{i}} \frac{\tilde p_{i'}a_{i'}}{p_{i}},\ 1 \leq i \leq N, \\
m_{i,c}(\vec a) &:= \sum_{i' \in \scrN_{i,c}} \frac{\tilde p_{i'}a_{i'}}{p_{i}},\ c \in \cc,\ 1 \leq i \leq N.
\end{align*} 
Let $f_{ij}$'s be as in Theorem \ref{cox-thm}. Then for each $\vec d \in \zz^N$, the following are equivalent:
\begin{enumerate}[(a)]
\item \label{bpf} $\sheaf(\sum d_j \Gamma_j)$ has no base point at infinity.
\item \label{bpf-AB} $\vec d = \vec d_{\vec a}$ for some $\vec a \in \zz_{\geq 0}^N$ such that for each $i$, $1 \leq i \leq N$, 
\begin{align}
d_{i}(\vec a) &= \max\left\{\delta_{i}\left(g_{\alpha\beta\vec\gamma}\right): (\alpha,\beta,\vec\gamma) \in \scrE_{\vec d_{\vec a}}\right\}, \label{not-sure-property} \\
m_{i}(\vec a) &= \max\left\{\deg_\xi\left(\lc_{i}\left(g_{\alpha\beta\vec\gamma}\right)\right): (\alpha,\beta,\vec\gamma) \in \scrE_{\vec d_{\vec a}},\ \delta_{i}(g_{\alpha\beta\vec\gamma})= d_{i}(\vec a) \right\}, \label{max-xi} \\
m_{i,c_{ii}}(\vec a) &= \min\left\{\ord_{\xi-c_{ii}}\left(\lc_{i}\left(g_{\alpha\beta\vec\gamma}\right)\right): (\alpha,\beta,\vec\gamma) \in \scrE_{\vec d_{\vec a}},\ \delta_{i}(g_{\alpha\beta\vec\gamma})= d_{i}(\vec a) \right\}, \label{min-xi} \\
a_ip_{i,l_i+1} + m_{i,c_{ii}}(\vec a) &= \max\left\{\ord_{\xi-c_{ii}}\left(\lc_{i}\left(g_{\alpha\beta\vec\gamma}\right)\right): (\alpha,\beta,\vec\gamma) \in \scrE_{\vec d_{\vec a}},\ \delta_{i}(g_{\alpha\beta\vec\gamma})= d_{i}(\vec a) \right\}. \label{max-order-xi} 
\end{align} 
\end{enumerate}
\end{thm}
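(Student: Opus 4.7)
The plan is to analyze the base locus of $\sheaf(\Gammad)$ at infinity one boundary curve $\Gamma_i$ at a time, parameterizing a neighborhood of $\Gamma_i$ in $X$ by the generic degree-wise Puiseux series $\tilde \phi_i(x,\xi)$, so that $\Gamma_i$ gets identified with $\pp^1_\xi$ outside the singular points of $X$. For a section $f \in \Gamma(X,\sheaf(\Gammad))$: if $\delta_i(f) < d_i$ then $f$ vanishes identically along $\Gamma_i$, while if $\delta_i(f) = d_i$ the ``trace'' of $f$ on $\Gamma_i$ is the polynomial $\lc_i(f)\in \cc[\xi]$ of Remark-Definition \ref{lc-remark}. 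Thus ``no base point on $\Gamma_i$'' is equivalent to the pair of conditions (i) some section attains $\delta_i = d_i$, which via assertion \ref{line-bundle-assertion} of Theorem \ref{cox-thm} is exactly condition \eqref{not-sure-property}; and (ii) the polynomials $\{\lc_i(g_{\alpha\beta\vec\gamma}) : (\alpha,\beta,\vec\gamma) \in \ed,\ \delta_i(g_{\alpha\beta\vec\gamma}) = d_i\}$ have no common zero on $\pp^1_\xi$.

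The next step is to pin down the distinguished points on $\Gamma_i$ coming from its intersections with adjacent boundary curves. Iteratively applying Proposition \ref{2compact} to $\delta_i$ and its truncations $\delta_{ij}$ from Notation \ref{set-up}, and comparing the Puiseux expansions of $\phi_i$ with those of $\phi_{i'}$ for $i' \in \scrN_i$, I would verify that $\Gamma_i$ meets its neighbors at $\xi = \infty$ (the unique ``parent'' curve corresponding to shorter Puiseux data), at $\xi = 0$ (curves $\Gamma_{i'}$ with $\tilde\delta_{i'} > \tilde\delta_i$), and at $\xi = c_{ii'}$ up to a $p_{i,l_i+1}$-th root of unity for $i' \in \scrN_{i,c}\setminus\{i\}$; by Lemma \ref{c-ii-lemma}, when $c_{ii}\neq 0$ the value $\xi = c_{ii}$ is itself distinguished as the unique $\xi$-point through which the full formal branch of $\phi_i$ passes. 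Expanding $\lc_i(g_{\alpha\beta\vec\gamma})$ as a product via the factorization \eqref{last-factorization} for each $f_{jl_j}$ and the recipe of Remark-Definition \ref{lc-remark}, I would then check the bounds $\deg_\xi \lc_i(g_{\alpha\beta\vec\gamma}) \leq m_i(\vec a)$ and $m_{i,c_{ii}}(\vec a) \leq \ord_{\xi - c_{ii}} \lc_i(g_{\alpha\beta\vec\gamma}) \leq m_{i,c_{ii}}(\vec a) + a_i p_{i,l_i+1}$, with the lower bound at $c_{ii}$ forced by the factors $f_{i'l_{i'}}^{\gamma_{i'l_{i'}}}$ for $i' \in \scrN_{i,c_{ii}}\setminus\{i\}$, and the gap $a_i p_{i,l_i+1}$ arising from the freedom in $\gamma_{il_i}\leq a_i$ (each unit contributing $p_{i,l_i+1}$ to the vanishing at $\xi = c_{ii}$, by property \ref{f-ij-2} of Lemma \ref{f-ij-lemma}). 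Attainability of the three extreme values is precisely \eqref{max-xi}, \eqref{min-xi} and \eqref{max-order-xi}; combined with the fact that base-point-freeness at $\xi = 0$ becomes automatic once $\vec d = \vec d_{\vec a}$, this yields (ii) via an elementary argument on $\pp^1_\xi$.

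Finally, the identity $\vec d = \vec d_{\vec a}$ itself encodes the intersection-theoretic constraint that $\Gammad$ is a non-negative integer combination of numerical $\Gamma_j$-curvette classes, with $\omega_{ij}$ being the intersection number of such a curvette (built from $f_{jl_j}$) with $\Gamma_i$; the case distinction in the formula for $\omega_{ij}$ reflects whether $\Gamma_i$ lies along the ``route'' traced by the blow-ups producing the center of $\delta_j$, i.e.\ whether $i \in \scrN_j$. The main obstacle will be the careful intersection-theory computation yielding the precise values $p_{j,l_j+1}\tilde p_i \tilde\delta_j(f_{jl_j})$ and $p_{j,l_j+1}\delta_i(f_{jl_j})$ in the two cases: this requires tracking the sequence of blow-ups from $\pp^2$ to $X$ together with the key-form stratification, combined with Proposition \ref{2compact} applied at each intermediate two-boundary surface $X_j$. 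Once these intersection numbers are in hand, the parameterization and leading-coefficient analysis above yield both directions of the equivalence.
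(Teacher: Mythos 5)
Your overall strategy is the same as the paper's: parametrize $\Gamma_i$ by $\xi$ via the generic degree-wise Puiseux series, read the trace of a section on $\Gamma_i$ as $\lc_i$, identify $\vec d_{\vec a}$ via intersection numbers of curvettes, and then analyze the roots of $\lc_i$. However, the proposal has a genuine gap in the direction (a)$\Rightarrow$(b), specifically for \eqref{max-order-xi}. You describe the bound $\ord_{\xi-c_{ii}}(\lc_i(\gabgamma)) \le a_ip_{i,l_i+1}+m_{i,c_{ii}}(\vec a)$ as coming from a static product formula, and then declare ``attainability'' as if it were automatic. It is not: nothing in a direct degree/order computation guarantees that some $\gabgamma$ in $\scrE_{\vec d_{\vec a}}$ actually achieves this maximum. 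The paper has to work for this. It runs the recursive reduction $h \mapsto h - h^*$ from the proof of Theorem \ref{cox-thm}, tracks the degree-wise Puiseux factorization of each successive remainder $h_k$ (this is Claim \ref{ki'-claim}, which shows that the factors coming from $\scrN_{i'}$ persist until the $\delta_{i'}$-value drops), and identifies the step $k_{i'}$ at which the required $\gabgamma = h^*_{k_{i'}}$ is produced. This inductive bookkeeping is the real content of \eqref{max-order-xi}, and your sketch does not engage with it.

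A smaller but still substantive issue is your framing of base-point-freeness on $\Gamma_i$ as ``the $\lc_i(\gabgamma)$'s have no common zero on $\pp^1_\xi$.'' As stated this is false: the $\lc_i$'s always share the factor $\eta_i(\xi)$ coming from branches that pass near $\Gamma_i$ but actually intersect other boundary components, and in general also share a positive power of $\xi-c_{ii}$. The correct (b)$\Rightarrow$(a) argument is a pencil argument: use \eqref{max-order-xi} together with the degree cap \eqref{max-xi} to produce a $\gabgamma$ with $\lc_i(\gabgamma) = c\,\eta_i(\xi)(\xi-c_{ii})^{a_ip_{i,l_i+1}}$ on the nose, use \eqref{min-xi} plus the divisibility assertion of Claim \ref{bpf-leading-claim} (established by the induction on $|\scrN_i|$ in Claims \ref{bpfactorization-claim} and \ref{star-claim'}) to find a second $g_{\alpha'\beta'\vec\gamma'}$ whose $\lc_i$ has $\ord_{\xi-c_{ii}}$ exactly $m_{i,c_{ii}}(\vec a)$, and then show that a generic linear combination has $\lc_i$ with the free roots avoiding any prescribed finite set, so that (after the factorization \eqref{h-factorization}) generic sections meet distinct nonsingular points of $X\setminus\cc^2$. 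You reach for a single ``elementary argument on $\pp^1_\xi$'' where the paper needs this two-section construction plus the $|\scrN_i|$-induction to handle the forced $\eta_i$ factor.
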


\begin{proof}
\eqref{bpf} $\im$ \eqref{bpf-AB}: Assume $\sheaf(\sum d_j \Gamma_j)$ has no base point at infinity. Let $C$ be the divisor corresponding to a generic global section of $\sheaf(\sum d_j \Gamma_j)$. By Bertini's theorem $C$ is transversal to the curve at infinity, and for each $j$,
\begin{enumerate}
\item $C$ intersects each $C_j$ (transversally) at $a_j$ distinct points for some $a_j \geq 0$,
\item $C \cap C_i \cap C_j = \emptyset$ for each $i \neq j$,
\item $C$ does not pass through the `zero point' of $C_j$ for any $j$.
\end{enumerate}
(note that $a_1, \ldots, a_N$ are independent of $C$). Pick $h \in \cc[x,y]$ which defines $C \cap \cc^2$. Then $h$ has a degree-wise Puiseux factorization of the form 
\begin{align*}
h = c\prod_{i=1}^N \prod_{j=1}^{a_i} \prod_{k=1}^{\tilde p_{i}} \left(y - \psi_{ijk}(x)\right)% \label{dfactorization}
\end{align*}
where $c \in \cc^*$, and $\psi_{ijk}$'s are conjugates of some $\psi_{ij}$ of the form $\phi_i(x) + \xi_{ij} x^{r_i} + \lot$ for $\xi_{ij} \in \cc^*$ which are `base point free' in the sense that
\begin{align}
\parbox{.72\textwidth}{for every finite set $S \subseteq \cc^*$, if $h$ is generic enough, then $\{\xi_{ij}\}_{j} \cap S = \emptyset$.} \label{bpf-roots}
\end{align}
A straightforward computation then shows that for each $i,i',j$, $1 \leq i,i' \leq N$, $1 \leq j \leq a_i$,
\begin{align}
& \delta_{i'}\left(\prod_{k=1}^{\tilde p_{i}} (y - \psi_{ijk}(x))\right) = \omega_{i'i},\ \text{so that} \notag \\
& \delta_{i'}(h) = \sum_{i=1}^N a_i\omega_{i'i} = d_{i'}(\vec a). \label{i'j'-bpf-1}
\end{align}
It follows that $\vec d = \vec d_{\vec a}$. The first assertion of the following claim is a straightforward implication of the `base point freeness' of $\xi_{ij}$'s, and the second assertion follows from the genericness of $h$.

\begin{proclaim} \label{bpf-leading-claim}
Fix $i'$, $1 \leq i' \leq N$. Let 
\begin{align}
\eta_{i'}(\xi) := \prod_{i \in \scrN_{i'}\setminus\{i'\}} (\xi^{p_{i',l_{i'}+1}} - c_{i'i}^{p_{i',l_{i'}+1}})^{(\tilde p_ia_i)/\tilde p_{i'}}. \label{eta}
\end{align}
Then 
\begin{enumerate}
\item \label{bpf-generic} $\lc_{i'}(h)  = c'\eta_{i'}(\xi) \prod_{j=1}^{a_{i'}}\left(\xi^{p_{i',l_{i'}+1}} - \xi_{i'j}^{p_{i',l_{i'}+1}} \right)$ for some $c' \in \cc^*$.
\item \label{bpf-all} Let $h' \in \Gamma(X, \sheaf(\sum_j d_j\Gamma_j))$ such that $\delta_{i'}(h') = d_{i'}$. Then $\lc_{i'}(h') = \eta_{i'}(\xi)\mu_{h'}(\xi)$ for some $\mu_{h'} \in \cc[\xi]$.\qed
\end{enumerate}
\end{proclaim}

Theorem \ref{cox-thm}, identity \eqref{i'j'-bpf-1}, Claim \ref{bpf-leading-claim} and Lemma \ref{c-ii-lemma} together imply that $\vec a$ satisfies \eqref{not-sure-property}, \eqref{max-xi} and \eqref{min-xi}.\\

Now we show that $\vec a$ satisfies \eqref{max-order-xi}. Recall the procedure from the proof of Theorem \ref{cox-thm} to write $h$ as a linear combination of the $\gabgamma$. Set $h_0 := h$, and inductively write $h_{k+1} = h_k - h_k^*$, where $h_k^*$ is computed from $h_k$ in the same way as $h^*$ is computed from $h$ in the proof of Theorem \ref{cox-thm}. 

\begin{proclaim} \label{ki'-claim}
Fix $i'$, $1 \leq i' \leq N$. Let $k_{i'} \geq 0$ be the smallest integer such that $\delta_{i'}(h_{k_{i'}}^*) = \delta_{i'}(h)$. Then for each $k$, $0 \leq k \leq k_{i'}$, the degree-wise Puiseux factorization of $h_k$ is of the form
\begin{align}
h_k = g_k(x,y)\prod_{i \in \scrN_{i'}} \prod_{j=1}^{a_i} \prod_{k=1}^{\tilde p_{i}} \left(y - \psi_{ijk}(x)\right), \label{hk-factorization}
\end{align}
for some $g_k(x,y) \in \dpsxc[y]$.
\end{proclaim}

\begin{proof}
We prove the claim by induction on $k$. It is clearly true if $k = 0$. Now pick $k$, $0 \leq k < k_{i'}$, such that \eqref{hk-factorization} holds for $k$. We will show that \eqref{hk-factorization} holds for $k+1$. Write the degree-wise Puiseux factorization of $h_k$ as 
\begin{align*}
h_k = cx^m \prod_{k'=1}^K \prod_{\parbox{1.75cm}{\scriptsize{$\psi_{k'l'}$ is a con\-ju\-ga\-te of $\psi_{k'}$}}}\mkern-27mu \left(y - \theta_{k'l'}(x)\right)
\end{align*}
Recall that $h_k^*$ is formed from \eqref{hk-factorization} by replacing each $h_{kk'} := \prod_{l'} (y - \theta_{k'l'}(x))$ by $h^*_{kk'} := f_{i_{k'}j_{k'}}^{n_{k'}}$ for certain $i_{k'},j_{k'}, n_{k'}$. Since $k < k_{i'}$, we have $\delta_{i'}(h_k) > \delta_{i'}(h_k^*)$. This implies that there exists $k'$ such that $ \delta_{i'}(h_{kk'}) > \delta_{i'}(h^*_{kk'})$. It then follows (e.g.\ due to Lemma \ref{delta-comparison}) that
\begin{align*}
s_{i'k'} := \min_{l'}\{\deg_x(\phi_{i'}(x) + \xi x^{r_{i'}} - \theta_{k'l'}(x)\} > \min_l \{\deg_x(\phi_{i'}(x) + \xi x^{r_{i'}} - \phi_{i_{k'}j_{k'}l}(x)\} \geq r_{i'},% \label{an-inequality}
\end{align*}
where $\phi_{i_{k'}j_{k'}l}$'s on the right hand side runs over all conjugates of $\phi_{i_{k'}j_{k'}}$. Pick $l'_0$ (resp.\ $l_0$) for which the minimum is achieved on the left (resp.\ right) hand side of the preceding inequality. Then 
\begin{align*}
\psi_{k'l'_0}(x) &= \phi_{i'}(x) + bx^{s_{i'k'}} + \ldt\\
\phi_{i_{k'}j_{k'}l_0}(x) &= \phi_{i'}(x) + \text{terms with degree less than}\ {s_{i'k'}}.
\end{align*}
Now fix $i \in \scrN_{i'}$. Then $\phi_i$ has a conjugate which agrees with $\phi_{i'}$ up to degree $r_{i'}$, so that 
\begin{align*}
\min_l \{\deg_x(\phi_{i}(x) + \xi x^{r_{i}} - \phi_{i_{k'}j_{k'}l}(x)\} < s_{i'k'} = \min_{l'}\{\deg_x(\phi_{i}(x) + \xi x^{r_{i}} - \theta_{k'l'}(x)\}.
\end{align*}
Lemma \ref{delta-comparison} then implies that $ \delta_{i}(h_{kk'}) > \delta_{i}(h^*_{kk'})$, so that $ \delta_{i}(h_k) > \delta_{i}(h^*_k)$. By induction hypothesis the (closure in $X$ of the) curve $h_k = 0$ intersects $\Gamma_i$ transversally at $a_i$ points. It follows that the curve defined by $h_{k+1} = h_k - h_k^*$ also intersects $\Gamma_i$ transversally at each of those $a_i$ points. This implies that \eqref{hk-factorization} holds for $k+1$, as required. 
\end{proof}

Fix $i,j$, $i \in \scrN_{i'} \setminus \{i'\}$, $1 \leq j \leq a_i$. Claim \ref{ki'-claim} implies that $h_{ij} := \prod_{k=1}^{\tilde p_{i}} \left(y - \psi_{ijk}(x)\right)$ divides $h_{k_{i'}}$ in $\dpsxc[y]$. Recall that $\psi_{ijk}$'s are conjugates of $\psi_{ij}$ and $\psi_{ij} = \phi_i (x) + \xi_{ij} x^{r_i} + \lot$. The arguments in the proof of Theorem \ref{cox-thm} show that the factor $h_{ij}^*$ of $h_{k_{i'}}^*$ corresponding to $h_{ij}$ can be defined to be $f_{il_{i}}^{p_{i,l_{i}+1}}$. It follows that $\lc_{i'}(h_{k_{i'}}^*)$ has a factor of the form
\begin{align*}
\sigma(\xi) &:= \prod_{i\in \scrN_{i'}}\prod_{j=1}^{a_i}\lc_{i'}(h_{ij}^*) = \prod_{i \in \scrN_{i'}} (\xi^{p_{i',l_{i'}+1}} - c_{i'i}^{p_{i',l_{i'}+1}})^{(\tilde p_ia_i)/\tilde p_{i'}} = \eta_{i'}(\xi)  (\xi^{p_{i',l_{i'}+1}} - c_{i'i'}^{p_{i',l_{i'}+1}})^{a_{i'}}.
\end{align*}
Since $\deg(\sigma) = m_{i'}(\vec a)$, identity \eqref{max-xi} implies that $\lc_{i'}(h_{k_{i'}}^*) = c_{i'}\sigma(\xi)$ for some $c_{i'} \in \cc^*$, and consequently we get, using Lemma \ref{c-ii-lemma}, that 
\begin{align*}
\ord_{\xi-c_{i'i'}}\left( \lc_{i'}(h_{k_{i'}}^*) \right) = \ord_{\xi-c_{i'i'}}(\sigma(\xi)) = m_{i',c_{i'i'}}(\vec a) + a_{i'}p_{i',l_{i'}+1}. 
\end{align*} 
On the other hand, assertion \ref{bpf-all} of Claim \ref{bpf-leading-claim}, coupled with identity \eqref{max-xi} and Lemma \ref{c-ii-lemma} imply that $\ord_{\xi-c_{i'i'}}(h') \leq m_{i',c_{i'i'}}(\vec a) + a_{i'}p_{i',l_{i'}+1}$ for all $h' \in \Gamma(X, \sheaf(\sum_j d_j\Gamma_j))$ such that $\delta_{i'}(h') = d_{i'}$. The proves \eqref{max-order-xi}, and finishes the proof that \eqref{bpf} $\im$ \eqref{bpf-AB}.\\

\eqref{bpf-AB} $\im$ \eqref{bpf}: Pick $\vec a$ that satisfies \eqref{not-sure-property}, \eqref{max-xi}, \eqref{min-xi}, \eqref{max-order-xi}, and set $\vec d := \vec d_{\vec a}$. At first  the following claim: 

\begin{proclaim}\label{bpfactorization-claim} 
Fix $i$, $1 \leq i \leq N$. Let $\eta_i(\xi)$ be as in \eqref{eta}. A generic $h \in \Gamma(X, \sheaf(\sum d_j \Gamma_j))$ satisfies $\lc_{i}(h)  = c\eta_{i}(\xi) \prod_{j=1}^{a_{i}}\left(\xi^{p_{i,l_{i}+1}} - \xi_{ij}^{p_{i,l_{i}+1}} \right)$ for some $c \in \cc^*$ and $\{\xi_{ij}\}_{i,j} \subseteq \cc^*$ that satisfies the `base point free condition' \eqref{bpf-roots}. In other words, assertion \ref{bpf-generic} of Claim \ref{bpf-leading-claim} is true for $i' := i$.
\end{proclaim}

\begin{proof}
We prove the claim by induction on $|\scrN_{i}|$. Indeed, if $|\scrN_{i}| = 1$, then $\eta_i(\xi) = 1$, $m_{i,c_{ii}}(\vec a) = 0$, and $m_{i}(\vec a) = p_{i,l_i + 1}a_i$. By assumption there exists $h_1, h_2 \in \Gamma(X, \sheaf(\sum d_j \Gamma_j))$ such that 
\begin{enumerate}
\item $\delta_i(h_1) = \delta_i(h_2) = d_i$,
\item $\lc_{i}(h_1) = c_1(\xi-c_{ii})^{m_i(\vec a)},\ c_1 \in \cc^*$, and
\item $\ord_{\xi-c_{ii}}(\lc_{i}(h_2)) = 0$.
\end{enumerate}
It follows that for a generic $\lambda \in \cc^*$, the roots $\xi_{ij}$ of $\lc_{i}(h_1 + \lambda  h_2)$ satisfies the base point free condition \eqref{bpf-roots}. Remark-Definition \ref{lc-remark} then implies that Claim \ref{bpfactorization-claim} holds for $i$.\\

Now pick $i'$ such that Claim \ref{bpfactorization-claim} holds for all $i$ with $|\scrN_{i}|< |\scrN_{i'}|$. 

\begin{proclaim} \label{star-claim'}
Let $h' \in \Gamma(X, \sheaf(\sum_j d_j\Gamma_j))$ such that $\delta_{i'}(h') = d_{i'}$. Then $\lc_{i'}(h') = \eta_{i'}(\xi)\mu_{h'}(\xi)$ for some $\mu_{h'} \in \cc[\xi]$. In other words, assertion \ref{bpf-all} of Claim \ref{bpf-leading-claim} holds for $i'$.
\end{proclaim}

\begin{proof}
Pick a {\em generic} $h' \in \Gamma(X, \sheaf(\sum_j d_j\Gamma_j))$ such that $\delta_{i'}(h') = d_{i'}$. It suffices to show that Claim \ref{star-claim'} holds for $h'$. Fix $i \in \scrN_{i'}\setminus\{i'\}$. Property \eqref{not-sure-property} implies that $\delta_i(h) = d_i$. Let $\zeta$ be a primitive $p_{i,l_i+1}$-th root of unity. The inductive hypothesis implies that $h'$ has $a_ip_{i,l_i+1}$ degree-wise Puiseux roots of the form 
$$\psi_{ijk}(x) = \phi_i(x) + \zeta^k \xi_{ij}x^{r_i} + \ldt $$
for $1 \leq k \leq p_{i,l_i+1}$, $1 \leq j \leq a_i$, and $\xi_{ij}$'s that satisfy the `base point free condition' \eqref{bpf-roots}. For each such root, there are $p_{i}/p_{i'}$ conjugates which are of the form 
$$\phi_{i'}(x) + c_{i'i}x^{r_{i'}} + \ldt$$
where $c_{i'i}$ is as in Definition \ref{ndefn}. It follows that each $i \in \scrN_{i'}\setminus\{i'\}$ contributes a factor of $\eta_{i'i}(\xi) := (\xi^{p_{i',l_{i'}+1}} - c_{i'i}^{p_{i',l_{i'}+1}})^{a_i\tilde p_i/\tilde p_{i'}}$ to $\lc_{i'}(h')$. Consequently $\lc_{i'}(h')$ has a factor of the form $\prod_{i \in \scrN_{i'}\setminus\{i'\}} \eta_{i'i}(\xi) = \eta_{i'}(\xi)$.
\end{proof}

\paragraph{\em Proof of Claim \ref{bpfactorization-claim} continued.} Using property \eqref{max-order-xi} we can pick $(\alpha, \beta, \vec \gamma) \in\ed$ such that $\delta_{i'}(\gabgamma) = d_{i'}$ and $\ord_{\xi-c_{i'i'}}\left(\lc_{i'}(\gabgamma)\right) = a_{i'}p_{i',l_{i'}+1} +m_{i',c_{i'i'}}(\vec a)$. Since $\ord_{\xi-c_{i'i'}}(\eta_{i'}(\xi))= m_{i',c_{i'i'}}(\vec a)$, applying Claim \ref{star-claim'} to $h := \gabgamma$ and using property \eqref{max-xi}, we see
\begin{align*}
\deg_\xi\left(\lc_{i'}(\gabgamma)\right)
	&\geq a_{i'}p_{i',l_{i'}+1} + \deg_\xi(\eta_{i'}(\xi)) 
	= \sum_{i \in  \scrN_{i'}} \tilde p_ia_i/p_{i'}
	= m_{i'}(\vec a)
	\geq \deg_\xi\left(\lc_{i'}(\gabgamma)\right).
\end{align*}
It follows that $\deg_\xi\left(\lc_{i'}(\gabgamma)\right) = m_{i'}(\vec a)$, and $\lc_{i'}(\gabgamma) = c(\xi - c_{i'i'})^{a_{i'}p_{i',l_{i'}+1}}\eta_{i'}(\xi)$ for some $c \in \cc^*$. On the other hand, property \eqref{min-xi} and Claim \ref{star-claim'} imply that there is $(\alpha', \beta', \vec \gamma') \in \ed$ such that $\delta_{i'}(g_{\alpha'\beta'\vec\gamma'}) = d_{i'}$ and $\lc_{i'}\left(g_{\alpha'\beta'\vec\gamma'}\right)= c'\eta_{i'}(\xi)\mu(\xi)$ for some $c' \in \cc^*$ and $\mu(\xi) \in \cc[\xi]$ with $\ord_{\xi-c_{i'i'}}(\mu(\xi)) = 0$. It follows that for a generic $\lambda \in \cc^*$, the roots $\xi_{i'j}$ of $\lc_{i'}(\gabgamma + \lambda g_{\alpha'\beta'\vec\gamma'})$ satisfy the base point free condition \eqref{bpf-roots}. Remark-Definition \ref{lc-remark} then implies Claim \ref{bpfactorization-claim} holds for $i'$, and completes the proof of Claim \ref{bpfactorization-claim}.
\end{proof}

Now pick a generic $h \in \Gamma(X, \sheaf(\sum d_j \Gamma_j))$. Claim \eqref{bpfactorization-claim} implies that $h$ has a factorization of the form
\begin{align}
h = g(x,y)\prod_{i=1}^N \prod_{j=1}^{a'_i} \prod_{k=1}^{b_{ij}\tilde p_{i}} \left(y - \psi_{ijk}(x)\right), \label{h-factorization}
\end{align}
where 
\begin{enumerate}
\item $g(x,y) \in \dpsxc[y]$, 
\item $a_i = \sum_{j=1}^{a'_i}b_{ij}$, $1 \leq i \leq N$, and
\item $\psi_{ijk}$'s are conjugates of some $\psi_{ij}$ of the form $\phi_i(x) + \xi_{ij} x^{r_i} + \lot$ for $\xi_{ij} \in \cc^*$ which satisfy the `base point free' condition \eqref{bpf-roots}.
\end{enumerate}
Using the `base point freeness' property of $\xi_{ij}$'s, it is straightforward to compute that for each $i,i',j$, $1 \leq i,i' \leq N$, $1 \leq j \leq a'_i$,
\begin{align*}
& \delta_{i'}\left(\prod_{k=1}^{b_{ij}\tilde p_{i}} (y - \psi_{ijk}(x))\right) = b_{ij}\omega_{i'i},\ \text{so that} \notag \\
& \delta_{i'}(h/g) = \sum_{i=1}^N a_i\omega_{i'i} = d_{i'}.
\end{align*}
It follows that $\delta_{i'}(g) = 0$ for all $i'$, $1 \leq i' \leq N$, and consequently, $g$ is a constant. But then \eqref{h-factorization} and the `base point freeness' property of $\xi_{ij}$'s imply that for generic $h_1, h_2 \in \Gamma(X, \sheaf(\sum d_j \Gamma_j))$, the closures $C_i$, $1 \leq i \leq 2$, on $X$ of the curves defined by $h_i$ on $\cc^2$ intersect distinct and {\em non-singular} points of $X \setminus \cc^2$. This proves \eqref{bpf}, and completes the proof of Theorem \ref{zariski-thm}.
\end{proof}

\section{The case of $X_V$, $V \in \scrV$} \label{onection}
In this section we illustrate Theorem \ref{cox-thm}, Corollary \ref{enricorollary} and Theorem \ref{zariski-thm} for the case of $X_V$ for $V \in \scrV$. We start with the case of $\scrV_1$.\\

Let $V := \cc \Span{G,H^d} \in \scrV_1$. Identify $\pp^2 \setminus \{H=0\}$ with $\cc^2$. Then $g := G|_{\cc^2}$ defines a curve $C$ with one place at infinity. Denote the irreducible components of $X_{V}\setminus \cc^2$ by $\Gamma_0, \ldots, \Gamma_N$, where $\Gamma_0$ is the strict transform of the line at infinity $H=0$. Recall that the dual graph of the curves at infinity are of the form as in Figure \ref{fig1-again}. Recall that there are polynomials $g_{1}, \ldots, g_{s}, g_{s+1} = g$ (the {\em approximate roots} of $g$ introduced by Abhyankar and Moh) such that each $g_k$ is a $\Gamma_{i_k}$-curvette. Choose coordinates $(x,y)$ on $\cc^2$ such that the point at infinity on $C$ is on the closure of the $x$-axis. Then each $g_k$, $1 \leq k \leq s+1$, is a monic polynomial in $y$ (as an element of $\cc[x][y]$). Set $g_0 := x$.

\begin{figure}[htp]

\newcommand{\block}[2]{
	\pgfmathsetmacro\x{\x + \edge}
 	
 	\draw[thick] (\x - \edge,\y) -- (\x,\y);
 	\draw[thick] (\x,\y) -- (\x+\edge,\y);
 	\draw[thick] (\x,\y) -- (\x, \y-\vedge);
 	\draw[thick, dashed] (\x, \y-\vedge) -- (\x, \y-\vedge - \dashedvedge);
 	\draw[thick] (\x, \y-\vedge - \dashedvedge) -- (\x, \y-2*\vedge - \dashedvedge);
 	
 	\fill[black] (\x - \edge, \y) circle (2pt);
 	\fill[black] (\x, \y) circle (2pt);
 	\fill[black] (\x + \edge, \y) circle (2pt);
 	\fill[black] (\x, \y- \vedge) circle (2pt);
 	\fill[black] (\x, \y- \vedge - \dashedvedge) circle (2pt);
 	\fill[black] (\x, \y- 2*\vedge - \dashedvedge) circle (2pt);
 	
 	\draw (\x,\y )  node (center) [below left] {\small{$\Gamma_{j_{#2}}$}};
 	\draw (\x,\y- 2*\vedge - \dashedvedge)  node (bottom3) [left] { \small $\Gamma_{i_{#2}}$};
 	\draw (\x+\edge,\y )  node (center) [below] {\small{$\Gamma_{i'_{#1}}$}};
 	
 	\pgfmathsetmacro\x{\x + \edge}
}

\begin{center}
\begin{tikzpicture}

	\pgfmathsetmacro\x{0}
	\pgfmathsetmacro\y{0}

 	\pgfmathsetmacro\edge{1.1}
 	\pgfmathsetmacro\vedge{.75}
 	\pgfmathsetmacro\dashedvedge{1}
 	
 	\fill[black] (0, 0) circle (2pt);	
 	\draw (0,0)  node (e0) [below] {$\Gamma_0 = \Gamma_{i'_1}$};
 	\draw[thick] (0,0) -- (\edge, 0);
 	\fill[black] (\edge,0) circle (2pt);
 	%\draw (\edge,0 )  node [below] {$\Gamma_{i'_1}$};
 	\draw[thick, dashed] (\edge,0) -- (2*\edge,0);
 	
 	\pgfmathsetmacro\x{\x + \edge}
	\block{2}{1}
	\draw[thick, dashed] (\x,0) -- (\x + \edge,0);
	
	\pgfmathsetmacro\x{\x + \edge}
	\block{3}{2}
	\draw[thick, dashed] (\x,0) -- (\x + 2*\edge,0);
	
	\pgfmathsetmacro\x{\x + 2*\edge}
	\block{s+1}{s}	
	
	\draw[thick, dashed] (\x,0) -- (\x + \edge,0);	
	\fill[black] (\x + \edge,0) circle (2pt);
	\draw[thick] (\x + \edge,0) -- (\x + 2*\edge,0);	
	\fill[black] (\x + 2*\edge,0) circle (2pt);
 	\draw (\x+2*\edge,0)  node [below] {\small $\Gamma_N = \Gamma_{i_{s+1}} $};	
\end{tikzpicture}
\end{center}
\caption{Dual graph of curves at infinity on $X_{V}$ for $V \in \scrV_1$} \label{fig1-again}
\end{figure}
Fix $k$, $0 \leq k \leq N$. Let $l_k$, $0 \leq l_k \leq s$, be such that $\Gamma_k \in [\Gamma_{i'_{l_k+1}}, \Gamma_{i_{l_k+1}}]$ (where $[\Gamma_{i'_{j}}, \Gamma_{i_{j}}]$ is defined as in the proof of Proposition \ref{pencil-surfaces}). Also, let $\delta_k$ be the \semidegree\ on $\cc[x,y]$ associated to $\Gamma_k$.

\begin{rem} \label{puiseux-remark}
The above definition of $l_k$ agrees with the definition in Notation \ref{set-up}, i.e.\ the degree-wise Puiseux series associated to $\delta_k$ has precisely $l_k+1$ {\em formal Puiseux pairs} (Definition \ref{formal-defn}) $(q_{k,1}, p_{k,1}), \ldots,(q_{k,l_k+1}, p_{k,l_k+1})$. Moreover, 
\begin{enumerate}
\item $p_{k,j} = \frac{\gcd\left(\delta_k(g_0), \ldots, \delta_k(g_{j-1})\right)}{\gcd\left(\delta_k(g_0), \ldots, \delta_k(g_{j})\right)}$, $1 \leq j \leq l_k+1$.
\item We can set $f_{kl_k} := g_{l_k}$ (where the correspondence between $f_{kl_k}$ and $\delta_k$ is as in Lemma \ref{f-ij-lemma})
\end{enumerate}
\end{rem}

Recall that $\{g_{1}, \ldots, g_{s+1}\}$ satisfies the properties of $\{f_{ij}\}_{i,j}$ of Theorem \ref{cox-thm} for $X = X_{V}$ (Remark \ref{one-place-f-ij}). For each $k_1, k_2$, $1 \leq k_1 \leq k_2 \leq s+1$, define 
\begin{align}
e_{k_1,k_2} &:= \frac{\deg(g_{k_2})}{\deg(g_{k_1})}.	\label{e}
\end{align}
Let $\scrB := \{i: 0 \leq i \leq N,$ and $\Gamma_i$ is on the (only) `horizontal segment' (which has $\Gamma_0$ and $\Gamma_N$ on the two ends) of the graph in Figure \ref{fig1-again}$\}$. The following lemma, which follows from more or less straightforward computations, computes $\scrN_i, \scrN_{i,c}, c_{ii}$ and $\lc_i$ (from Definition \ref{ndefn} and Remark-Definition \ref{lc-remark}), $1 \leq i \leq N$.

\begin{lemma}
Set $f_{kl_k} := g_{l_k}$, $0 \leq k \leq N$. 
\begin{enumerate}
\item Fix $i,k$, $0 \leq i,k \leq N$. Then $k \in \scrN_i$ iff one of the following conditions holds:
\begin{enumerate}
\item $k = i$.
\item $i \in \scrB$ and the position of $\Gamma_k$ on the graph of Figure \ref{fig1-again} is below and/or to the right of $\Gamma_i$. 
\item $\Gamma_i$ and $\Gamma_k$ are both on the same `vertical segment' of the graph of Figure \ref{fig1-again} and $\Gamma_k$ is below $\Gamma_i$. 
\end{enumerate}
\item 
\begin{enumerate}
\item If $k \not\in \{j_1, \ldots, j_s\}$, then $\scrN_{k,c_{kk}} = \scrN_k\setminus\{k\}$.%, and $\scrN_c = \emptyset$ for all $c \neq c_{kk}$.
\item \label{c^*_q} If $k = j_q$, $1 \leq q \leq s$, then $c_{kk} = 0$ and $\scrN_k = \scrN_{j_q,0} \cup \scrN_{j_q, c^*_q} \cup \{j_q\}$ for some $c^*_q \neq 0$. Moreover, 
\begin{enumerate}
\item $\scrN_{j_q,0}$ consists of all $l$ such that $l \neq j_q$ and the position of $\Gamma_l$ on the graph in Figure \ref{fig1-again} is on the vertical segment between $\Gamma_{j_q}$ and $\Gamma_{i_q}$.
\item $\scrN_{j_q, c^*_q} = \scrN_{j_q} \setminus \left(\scrN_{j_q, 0} \cup \{j_q\}\right)$.
\end{enumerate}
\end{enumerate}
\item
\begin{enumerate}
\item If $k \not\in \scrB$, then $\lc_k(g_1^{\alpha_1} \cdots g_{s+1}^{\alpha_{s+1}}) = c(\xi-c_{kk})^{\alpha_{l_k+1}}$ for some $c \in \cc^*$. If in addition $k \neq i_{l_k+1}$, then $c_{kk} = 0$.
\item If $k \in \scrB\setminus\{j_{1}, j_2, \ldots, j_{s} \}$, then $\lc_k(g_1^{\alpha_1} \cdots g_{s+1}^{\alpha_{s+1}}) = c(\xi - c_{kk})^{\sum_{i =l_k+1}^{s+1} \alpha_{i}e_{l_k+1,i}}$ for some $c \in \cc^*$.
\item If $\Gamma_k = \Gamma_{j_{q}}$ for some $q$, $1 \leq q \leq s$, then $\lc_k(g_1^{\alpha_1} \cdots g_{s+1}^{\alpha_{s+1}}) = c\xi^{\alpha_q}(\xi^{e_{q,q+1}} - c^*_q)^{\sum_{i = q+1}^{s+1} \alpha_{i}e_{q+1,i}}$, where $c \in \cc^*$ and $c^*_q$ is as in assertion \ref{c^*_q}. \qed
%\item If $k = 0$, then $c_{kk} = 0$ and $\lc_k(g_1^{\alpha_1} \cdots g_{s+1}^{\alpha_{s+1}}) = \xi^\sum_{i > q} \alpha_{i}e_{q,i}	
\end{enumerate}
\end{enumerate}
\end{lemma}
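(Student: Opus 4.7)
The plan is to translate each assertion into a combinatorial statement on the dual graph of Figure~\ref{fig1-again}, using the generic degree-wise Puiseux series $\tilde\phi_k(x,\xi) = \phi_k(x) + \xi x^{r_k}$ associated to $\delta_k$. By Remark~\ref{puiseux-remark}, the characteristic data of $\tilde\phi_k$ is determined by $\delta_k(g_0), \ldots, \delta_k(g_{l_k})$, and $\phi_k$ is a truncation of a Puiseux root of $g = g_{s+1}$. Since $g$ defines a curve with one place at infinity, its degree-wise Puiseux roots form a single conjugacy class, and the tree of truncations of these roots (at successive characteristic exponents together with affine parameters at each exponent) coincides with the dual graph: the horizontal segment tracks the advance through successive characteristic Puiseux pairs of $g$, while each vertical segment tracks the affine coefficient at the corresponding exponent.

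For part~(1), the definition of $\scrN_i$ together with \eqref{puiseux-degree} shows that $\tilde\delta_{i'} \leq \tilde\delta_i$ iff (a conjugate of) $\phi_{i'}$ agrees with $\phi_i$ up to terms of $x$-degree $\leq r_i$, i.e.\ $\Gamma_{i'}$ is a descendant of $\Gamma_i$ in the truncation tree; the alternatives (a)--(c) are precisely the three ways this can happen on the graph. For part~(2), relation~\eqref{c-ii'} identifies $c_{ii'}^{p_{i,l_i+1}}$ with the coefficient of $x^{r_i}$ in the truncation indexed by $i'$. When $k \notin \{j_1,\ldots,j_s\}$ only one non-trivial branch emerges from $\Gamma_k$, so by Lemma~\ref{c-ii-lemma} every $i' \in \scrN_k \setminus \{k\}$ lies in $\scrN_{k,c_{kk}}$. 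When $k = j_q$, the vertical branch below $\Gamma_k$ has not yet introduced a nonzero coefficient at $x^{r_{j_q}}$ (forcing $c_{kk} = 0$ and giving $\scrN_{j_q,0}$), while the horizontal continuation introduces exactly one nonzero class $c_q^*$ coming from the generic leading term at the next characteristic pair, giving $\scrN_{j_q,c_q^*}$.

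For part~(3), I would evaluate each factor $g_j$ at $y = \tilde\phi_k(x,\xi)$ using its degree-wise Puiseux factorization: only those Puiseux roots $\psi$ of $g_j$ that minimize $\deg_x(\tilde\phi_k - \psi)$ contribute to $\lc_k(g_j)$, and their contributions are linear factors whose leading coefficient is the top coefficient of $\tilde\phi_k - \psi$. The three subcases then amount to counting how many such $\psi$ appear for each $g_j$, organized by the position of $\Gamma_k$: for $k \notin \scrB$ only the roots of $g_{l_k+1}$ contribute $\xi$-dependence, for $k \in \scrB \setminus \{j_1,\ldots,j_s\}$ all of $g_{l_k+1},\ldots,g_{s+1}$ contribute with multiplicities scaled by the ratios $e_{l_k+1,i}$, and for $k = j_q$ the roots of $g_q$ must be separated from those of $g_{q+1},\ldots,g_{s+1}$. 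The main bookkeeping obstacle is this last junction case~(3c): one must check that the roots of $g_q$ yield exactly the factor $\xi^{\alpha_q}$ (since their top coefficient relative to $\tilde\phi_{j_q}$ is $0$), while those of $g_i$ for $i > q$ yield $(\xi^{e_{q,q+1}} - c_q^*)$-type factors via the nonzero class $c_q^*$; once this is in place, the remaining cases follow by the same scheme with simpler multiplicity counts.
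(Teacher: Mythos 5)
The paper itself provides no proof of this lemma---it simply asserts that the assertions ``follow from more or less straightforward computations'' and places a \qed\ at the end of the statement. So there is nothing in the paper to compare against directly, and your outline is in fact more explicit than what the author supplies. Your reduction is the right one: the dual graph of Figure~\ref{fig1-again} \emph{is} the tree of truncations of the (conjugate) degree-wise Puiseux roots of $g$, the relation $\tilde\delta_k \leq \tilde\delta_i$ does amount to $\Gamma_k$ being a descendant of $\Gamma_i$ in that tree, and parts~(2)--(3) then become a matter of computing, for each $g_j$, which of its roots $\psi$ satisfy $\deg_x(\tilde\phi_k - \psi) \leq r_k$ and with what top coefficient relative to $\tilde\phi_k$. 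You also correctly single out the junction case~(3c) as the one requiring the most careful bookkeeping, since that is where the roots of $g_q$ (with coefficient $0$ at $x^{r_{j_q}}$) and those of $g_{q+1},\ldots,g_{s+1}$ (with coefficient in the class $c_q^*$) both land at $\deg_x = r_{j_q}$, and these two contributions must be separated into the $\xi^{\alpha_q}$ and $(\xi^{e_{q,q+1}}-c^*_q)^{\cdots}$ factors respectively.

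One place where I would tighten the wording is your justification of part~(2a): ``only one non-trivial branch emerges from $\Gamma_k$'' is ambiguous if read as a statement about the dual graph (a horizontal node not at a junction does have many dual-graph branches below it). What is really needed is that for $k \notin \{j_1,\ldots,j_s\}$ the exponent $r_k$ is \emph{not} a characteristic exponent of $g$ (equivalently $p_{k,l_k+1}=1$ when $k\in\scrB$, or $r_k$ lies strictly inside the $q$-th Puiseux pair when $k$ is on a vertical segment), so that every root of every $g_j$ with $j > l_k$ that passes through the truncation level $r_k$ does so with the \emph{same} coefficient; this, via Lemma~\ref{c-ii-lemma} and identity~\eqref{c-ii'}, is what collapses $\scrN_k\setminus\{k\}$ into a single class $\scrN_{k,c_{kk}}$. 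With that clarification your plan carries through, and the remaining computations are indeed the ``straightforward'' substitutions $y = \tilde\phi_k(x,\xi)$ the paper alludes to.
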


The following result follows from combining Theorem \ref{cox-thm}, Corollary \ref{enricorollary} and Theorem \ref{zariski-thm} with the above observations (plus Remark \ref{positively-zariski-infinite}). At first we recall some notations: let $\vec e_j$, $0 \leq j \leq N$, be the $j$-th unit vector in $\zz^{N+1}$. For $\vec d := (d_0, \ldots, d_N) \in \zz^N$, write $\Gammad$ for the divisor $\sum_{j=0}^N d_j\Gamma_j \in \cl(X_V)$ and if $f \in \Gamma\left(X_V, \sheaf(\Gammad)\right)$, we denote by $(f)_{\vec d}$ the `copy' of $f$ in the $\vec d$'th graded component of $\scrR(X_V)$. Moreover, for $f \in \cc(x,y)$ we denote by $\vec\delta(f)$ the element $(\delta_0(f), \ldots, \delta_N(f)) \in \zz^{N+1}$. 

\begin{cor} \label{one-place-cor}
\mbox{}
\begin{enumerate}
\item $\vecdelta{g_j}$'s together with $(1)_{\vec e_j}$, $0 \leq j \leq N$, generate $\scrR(X_V)$ as a $\cc$-algebra.
\item Let $\vec d := (d_0, \ldots, d_N) \in \zz^{N+1}$. Let $\scrE_{\vec d}$ be the collection of all $\vec \alpha := (\alpha_0, \ldots, \alpha_{s+1}) \in \zz_{\geq 0}^{s+2}$ such that $\sum_{j=0}^{s+1} \alpha_{j}\delta_k(g_{j}) \leq d_k$ for all $k$, $0 \leq k \leq N$. Then $\Gamma\left(X, \sheaf(\Gammad)\right)$ is a generated as a vector space over $\cc$ by $\{\prod_{j=0}^{s+1} g_j^{\alpha_j}: \vec \alpha\in \ed\}$.
\item Let $\vec d := (d_0, \ldots, d_N) \in \zz^{N+1}$. Define $\scrE'_{\vec d} := \{(\alpha_0,\sum_{j=1}^{s+1}\alpha_j \deg_y(g_j)): (\alpha_0, \ldots, \alpha_{s+1})\in \scrE_{\vec d}\} \subseteq \zz_{\geq 0}^2$. Then $\dim \Gamma\left(X, \sheaf(\Gammad)\right) = |\scrE'_{\vec d}|$. For each $(a,b) \in \scrE'_{\vec d}$, pick any $\vec \alpha \in \ed$ such that $a = \alpha_0$ and $b = \sum_{j=1}^{s+1}\alpha_j \deg_y(g_j)$, and define $g_{a,b} := \prod_{j=0}^{s+1} g_j^{\alpha_j}$. Then $\{g_{a,b}: (a,b) \in \scrE'_{\vec d}\}$ is a basis of $\Gamma\left(X, \sheaf(\Gammad)\right)$.
\item For each $\vec d := (d_0, \ldots, d_N) \in \zz^{N+1}$, the following are equivalent:
\begin{enumerate}
\item $\Gammad \in \pst(X_V)$.
\item $d_j = \max\left\{\sum_{k=0}^{s+1} \alpha_k\delta_j(g_k): (\alpha_0, \ldots, \alpha_{s+1}) \in \ed\right\}$ for all $j$, $0 \leq j \leq N$.	
\item For each $j$, $0 \leq j \leq N$, there exists $(\alpha_0, \ldots, \alpha_{s+1}) \in \zz_{\geq 0}^{s+2}$ such that $\sum_{k=0}^{s+1} \alpha_k\delta_j(g_k) = d_j$ and $\sum_{k=0}^{s+1} \alpha_k\delta_i(g_k) \leq d_i$ for all $i$, $0 \leq i \leq N$.
\end{enumerate}
\item For each $k$, $0 \leq k \leq N$, and each $\vec \alpha := (\alpha_0, \ldots, \alpha_{s+1}) \in \zz_{\geq 0}^{s+2}$, define
\begin{align*}
\mu_k(\vec \alpha) &:= 
	\begin{cases}
		\alpha_{l_k+1} & \text{if}\ k \not\in \scrB \\
		\sum_{i \geq l_k+1} \alpha_{i}e_{l_k+1,i} & \text{if}\ k \in \scrB.
	\end{cases}
&\qquad
\nu_k(\vec \alpha) &:= 
	\begin{cases}
		\alpha_{l_k+1} & \text{if}\ k \not\in \scrB \\
		\sum_{i \geq l_k+1} \alpha_{i}e_{l_k+1,i} & \text{if}\ k \in \scrB\setminus\{j_{1}, \ldots, j_{s} \} \\
		\alpha_q & \text{if}\ k = j_{q},\ 1 \leq q \leq s.
	\end{cases}
\end{align*}
Let $\vec a := (a_0, \ldots, a_N) \in \zz_{\geq 0}^{N+1}$. Define $d_k(\vec a) := \sum_{i=0}^N a_i\omega_{ki}$, $0 \leq k \leq N$, where
\begin{align*}
\omega_{ki} &:= 
	\begin{cases}
		p_{i,l_i+1}\delta_k(f_{il_i}) & \text{if}\ k \not\in \scrN_i\\
		\frac{\prod_{j=1}^{l_k+1}p_{k,j}}{\prod_{j=1}^{l_i}p_{i,j}} \delta_i(f_{il_i})  & \text{if}\ k \in \scrN_i.
	\end{cases}
\end{align*}
and set $\vec d_{\vec a} := (d_0(\vec a), \ldots, d_N(\vec a))$. For each $k$, $0 \leq k \leq N$, define
\begin{alignat*}{2}
m_{k}(\vec a) &:= \sum_{i \in \scrN_{k}} \frac{\prod_{j=1}^{l_i+1}p_{i,j}}{\prod_{j=1}^{l_k}p_{k,j}}a_{i},
&\qquad
n_{k}(\vec a) &:= \sum_{i \in \scrN_{k,c_{kk}}} \frac{\prod_{j=1}^{l_i+1}p_{i,j}}{\prod_{j=1}^{l_k}p_{k,j}}a_{i}.
\end{alignat*} 
Then for each $\vec d \in \zz^{N+1}$, the following are equivalent:
\begin{enumerate}[(a)]
\item $\Gammad \in \tilde P(X_V) $.
\item  $\vec d = \vec d_{\vec a}$ for some $\vec a \in \zz_{\geq 0}^{N+1}$ such that for each $k$, $0 \leq k \leq N$, 
\begin{align*}
d_k &= \max\left\{\sum_{j=0}^{s+1} \alpha_j\delta_k(g_j): \vec \alpha \in \ed\right\}, \\
m_{k}(\vec a) &= \max\left\{ \mu_k(\vec \alpha ): \vec \alpha \in \ed,\ \sum_{j=0}^{s+1} \alpha_j\delta_k(g_j)= d_k \right\}, \\
n_{k}(\vec a) &= \min\left\{ \nu_k(\vec \alpha ): \vec \alpha \in \ed,\ \sum_{j=0}^{s+1} \alpha_j\delta_k(g_j)= d_k \right\}, \\
a_kp_{k,l_k+1} + n_{k}(\vec a) &= \max\left\{ \nu_k(\vec \alpha ): \vec \alpha \in \ed,\ \sum_{j=0}^{s+1} \alpha_j\delta_k(g_j)= d_k \right\}. \qed
\end{align*} 
\end{enumerate}
\end{enumerate}
\end{cor}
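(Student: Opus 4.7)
The plan is to derive the corollary as a direct translation of Theorem \ref{cox-thm}, Corollary \ref{enricorollary}, and Theorem \ref{zariski-thm} to the setting $X = X_V$ for $V \in \scrV_1$. First, I would invoke Proposition \ref{pencil-surfaces} to ensure $X_V \in \surfaceplus$, so that all results of Sections \ref{coxection} and \ref{linection} apply; Remark \ref{positively-zariski-infinite} then identifies $\tilde P(X_V)$ with $\tilde P_\infty(X_V)$, which is precisely what Theorem \ref{zariski-thm} characterizes.

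Next I would invoke Remark \ref{one-place-f-ij} together with Remark \ref{puiseux-remark} to legitimize the choice of the Abhyankar--Moh approximate roots $g_1, \ldots, g_{s+1}$ (augmented by $g_0 := x$) as a valid instance of the collection $\{f_{ij}\}$ in Theorem \ref{cox-thm}; more precisely, for each irreducible component $\Gamma_k$ at infinity one may take $f_{kl_k} = g_{l_k}$. With this identification, assertions (1)--(3) of the corollary follow by reading off assertions (1)--(3) of Theorem \ref{cox-thm} and noting that, since only $s+2$ generating polynomials are needed rather than the full $\sum_k(l_k+1)$-indexed collection, the exponent vector can be truncated to $\vec \alpha = (\alpha_0, \ldots, \alpha_{s+1})$; assertion (4) is then a direct translation of Corollary \ref{enricorollary}.

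The bulk of the work lies in assertion (5). For this I would rely crucially on the lemma immediately preceding the corollary, which explicitly computes $\scrN_i$, $\scrN_{i,c}$, $c_{ii}$, and the leading coefficient $\lc_k(g_1^{\alpha_1}\cdots g_{s+1}^{\alpha_{s+1}})$ in terms of the dual graph in Figure \ref{fig1-again}. Using that lemma, each quantity appearing in Theorem \ref{zariski-thm} admits a direct translation: $\deg_\xi(\lc_k(\cdot))$ becomes $\mu_k(\vec \alpha)$ and $\ord_{\xi - c_{kk}}(\lc_k(\cdot))$ becomes $\nu_k(\vec \alpha)$, with the distinction between the cases $k \notin \scrB$, $k \in \scrB\setminus\{j_1,\ldots,j_s\}$, and $k = j_q$ dictating respectively the two branches of $\mu_k$ and the three branches of $\nu_k$. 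Similarly, the numerical quantities $\omega_{ki}$, $m_k(\vec a)$, and $m_{k,c_{kk}}(\vec a)$ of Theorem \ref{zariski-thm} match those in the corollary after expanding $\tilde p_i = \prod_{j=1}^{l_i+1} p_{i,j}$ and $p_i = \prod_{j=1}^{l_i} p_{i,j}$ via Remark \ref{puiseux-remark}.

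The main obstacle is purely combinatorial bookkeeping: one must carefully track the shift in indexing (the general theorem uses $\zz^N$ while the corollary uses $\zz^{N+1}$ to accommodate $\Gamma_0$), and verify that the piecewise definitions of $\mu_k$ and $\nu_k$ correctly reflect the three geometric possibilities for the position of $\Gamma_k$ in the dual graph of Figure \ref{fig1-again} (namely: a non-branch vertex on a vertical segment, a non-branch vertex on the horizontal segment, or a branch vertex $\Gamma_{j_q}$). In particular, care is needed at the branch points $\Gamma_{j_q}$, where $c_{j_qj_q} = 0$ and $\lc_{j_q}$ factors nontrivially as $\xi^{\alpha_q}$ times a power of $(\xi^{e_{q,q+1}} - c^*_q)$; the corresponding $\nu_{j_q}(\vec\alpha)= \alpha_q$ tracks only the $\xi$-vanishing order (whereas $\mu_{j_q}$ counts the total $\xi$-degree). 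Once the lemma is in hand, no further analytic content is required, and the corollary follows by inspection.
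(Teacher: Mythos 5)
Your proposal is correct and takes essentially the same route as the paper: the paper itself introduces the corollary with the remark that it ``follows from combining Theorem \ref{cox-thm}, Corollary \ref{enricorollary} and Theorem \ref{zariski-thm} with the above observations (plus Remark \ref{positively-zariski-infinite})'' and gives no further argument, so your proposal---setting $f_{kl_k} = g_{l_k}$ via Remarks \ref{one-place-f-ij} and \ref{puiseux-remark}, invoking Proposition \ref{pencil-surfaces} and Remark \ref{positively-zariski-infinite}, and translating $\deg_\xi(\lc_k)$, $\ord_{\xi-c_{kk}}(\lc_k)$, $\scrN_i$, $\scrN_{i,c}$ via the lemma preceding the corollary into $\mu_k$, $\nu_k$, $\omega_{ki}$, $m_k$, $n_k$---is precisely the intended derivation, including your observation about the $\zz^N$ versus $\zz^{N+1}$ indexing and the branch-vertex cases $\Gamma_{j_q}$.
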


Let $V_i := \cc \Span{G_i,H^d} \in \scrV$, $1 \leq i \leq 2$. Identify $\pp^2 \setminus \{H=0\}$ with $\cc^2$. Assume that the germs at infinity of curves defined by $G_1$ and $G_2$ are equisingular. Then there is a natural bijection between the sets of irreducible curves at infinity on $X_{V_i}$'s which induces an isomorphism $\rho: \Pic(X_{V_1}) \cong \Pic(X_{V_2})$. 

\begin{cor} \label{equisingular-corollary}
\mbox{}
\begin{enumerate}
\item \label{equi-dimension} For each line bundle $\scrL  \in \Pic(X_{V_1})$, $\dim_\cc(\Gamma(X_{V_1}), \scrL) = \dim_\cc(\Gamma(X_{V_2}), \sheaf(\rho(\scrL)))$.
\item \label{equi-semigroups} $\rho$ induces isomorphisms $\pst(X_{V_1}) \cong \pst(X_{V_2})$ and $\tilde P_{\infty}(X_{V_1}) \cong \tilde P_{\infty}(X_{V_2})$. In particular, if $D$ is an effective divisor with support at infinity (i.e.\ $\supp(D) \subseteq X_{V_1}\setminus \cc^2$), then $D \in  \tilde P_{\infty}(X_{V_1})$ iff $\rho(D) \in \tilde P_{\infty}(X_{V_2})$.
\item \label{equi-one-place} If $V_1, V_2 \in \scrV_1$, then $\tilde P(X_{V_1}) \cong \tilde P(X_{V_2})$.
\end{enumerate} 
\end{cor}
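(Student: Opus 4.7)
The plan is to reduce the corollary to the explicit characterizations in Theorem \ref{cox-thm}, Corollary \ref{enricorollary} and Theorem \ref{zariski-thm}, and then check that each input entering those characterizations depends only on the equisingular class of the germ at infinity.

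First I would fix approximate roots $g_1,\ldots,g_{s+1}$ (resp.\ $g'_1,\ldots,g'_{s+1}$) of $G_1|_{\cc^2}$ (resp.\ $G_2|_{\cc^2}$). Equisingularity at infinity forces the Puiseux pairs of the two germs at infinity to coincide, which (classically) translates to an isomorphism of the weighted dual graphs of $X_{V_1}\setminus\cc^2$ and $X_{V_2}\setminus\cc^2$; this isomorphism is precisely $\rho$. Using Remark \ref{puiseux-remark}, I would then argue that the Puiseux data $(p_{i,j}, q_{i,j}, r_{i,j}, \tilde p_i)$ of $\delta_i$ is determined by the position of $\Gamma_i$ in this graph, and that the intersection numbers $\delta_i(g_j)$ and the degrees $\deg(g_j)$ can be read off combinatorially. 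Hence the approximate roots can be chosen so that $\deg(g_j)=\deg(g'_j)$ and $\delta_i(g_j)=\delta_{\rho(i)}(g'_j)$ for all $i,j$.

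Next I would verify that the finer combinatorial data entering Theorem \ref{zariski-thm}, namely the sets $\scrN_i$ and their partitions $\scrN_i\setminus\{i\}=\bigsqcup_c \scrN_{i,c}$ together with the class of $c_{ii}$, are equisingular invariants. The set $\scrN_i$ is read off from the graph since it only records which $\tilde\delta_{i'}$ dominate $\tilde\delta_i$. For the partition, although the individual constants $c_{ii'}\in\cc$ depend on the particular polynomial and coordinate choice, the classes themselves record which pairs of branches at infinity collide at a common infinitely-near point during the common portion of their resolution, and this ``collision pattern'' is determined by the dual graph. The distinguished class containing $c_{ii}$ is similarly intrinsic: by Lemma \ref{c-ii-lemma} it is forced as soon as $p_{i,l_i+1}>1$, and otherwise it is identified combinatorially from the resolution.

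With these matchings in place the corollary will follow directly. Part \eqref{equi-dimension} is immediate from assertion \ref{basis-assertion} of Theorem \ref{cox-thm}, since $\dim_\cc\Gamma(X_{V_i},\sheaf(\Gammad))=|\scrE'_{\vec d}|$ depends only on the data matched above; Corollary \ref{enricorollary} delivers the $\pst$ half of \eqref{equi-semigroups}, and Theorem \ref{zariski-thm} delivers the $\tilde P_\infty$ half. Part \eqref{equi-one-place} then follows because Proposition \ref{pencil-surfaces} places $X_{V_i}\in\surfaceplus$ whenever $V_i\in\scrV_1$, and Remark \ref{positively-zariski-infinite} identifies $\tilde P$ with $\tilde P_\infty$ in that case. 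The main obstacle I expect is the verification that the partition $\{\scrN_{i,c}\}_c$ (and in particular the distinguished class $\scrN_{i,c_{ii}}$) is an equisingular invariant: a naive matching of Puiseux coefficients at the critical exponent $r_i$ does not survive a change of curve within the equisingularity class, so this step has to be argued via the geometry of the infinitely-near base points of the resolution rather than directly from the Puiseux expansions.
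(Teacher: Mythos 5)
Your proposal takes essentially the same route as the paper. The paper's own proof is a single sentence reducing to Corollary \ref{one-place-cor} and Remark \ref{positively-zariski-infinite}, and the obstacle you flag at the end — verifying that the partition $\{\scrN_{i,c}\}_c$ and the distinguished class $\scrN_{i,c_{ii}}$ are equisingularity invariants, readable from the dual graph rather than from raw Puiseux coefficients — is exactly what the unnumbered lemma preceding Corollary \ref{one-place-cor} supplies (it describes $\scrN_i$, $\scrN_{i,c_{ii}}$, $c_{ii}$ and $\lc_i$ purely in terms of the combinatorics of Figure \ref{fig1-again}); so your worry is well-taken and already discharged by the paper's set-up.
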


\begin{proof}
If $V_1, V_2 \in \scrV_1$, then the statements of Corollary \ref{equisingular-corollary} follow from Corollary \ref{one-place-cor} and Remark \ref{positively-zariski-infinite}. The general case follows via almost the same arguments. 
\end{proof}

\section[Appendix]{Appendix - preliminaries on degree-wise Puiseux series} \label{appendix}

\begin{defn}[Degree-wise Puiseux series] \label{dpuiseuxnition}
The field of {\em degree-wise Puiseux series} in $x$ is 
$$\dpsxc := \bigcup_{p=1}^\infty \cc((x^{-1/p})) = \left\{\sum_{j \leq k} a_j x^{j/p} : k,p \in \zz,\ p \geq 1 \right\},$$
where for each integer $p \geq 1$, $\cc((x^{-1/p}))$ denotes the field of Laurent series in $x^{-1/p}$. Let $\phi$ be a degree-wise Puiseux series in $x$. The {\em polydromy order} (terminology taken from \cite{casas-alvero}) of $\phi$ is the smallest positive integer $p$ such that $\phi \in \cc((x^{-1/p}))$. For any $r \in \qq$, let us denote by $[\phi]_{>r}$ (resp.\ $[\phi]_{\geq r}$) sum of all terms of $\phi$ with order greater than (resp.\ greater than or equal to) $r$. Then the {\em Puiseux pairs} of $\phi$ are the unique sequence of pairs of relatively prime integers $(q_1, p_1), \ldots, (q_k,p_k)$ such that the polydromy order of $\phi$ is $p_1\cdots p_k$, and for all $j$, $1 \leq j \leq k$,
\begin{compactenum}
\item $p_j \geq 2$,
\item $[\phi]_{>\frac{q_j}{p_1\cdots p_j}} \in \cc((x^{-\frac{1}{p_0\cdots p_{j-1}}}))$ (where we set $p_0 := 1$), and 
\item $[\phi]_{\geq \frac{q_j}{p_1\cdots p_j}} \not\in \cc((x^{-\frac{1}{p_0\cdots p_{j-1}}}))$.
\end{compactenum}
The exponents $q_j/(p_1 \cdots p_j)$, $1 \leq j \leq k$, are called the {\em characteristic exponents} of $\phi$. Let $\phi = \sum_{q \leq q_0} a_q x^{q/p}$, where $p$ is the polydromy order of $\phi$. Then the {\em conjugates} of $\phi$ are $\phi_j := \sum_{q \leq q_0} a_q \zeta^q x^{q/p}$, $1 \leq j \leq p$, where $\zeta$ is a primitive $p$-th root of unity.
\end{defn}

\begin{defn} \label{formal-defn}
Let $\delta$ be a semidegree and, as in \eqref{puiseux-degree}, let $\tilde \phi(x,\xi):= \phi(x) + \xi x^{r}$ be the {\em generic degree-wise Puiseux series} associated to $\delta$. Let the Puiseux pairs of $\phi$ be $(q_1, p_1), \ldots, (q_l,p_l)$. Express $r$ as $q_{l+1}/(p_1 \cdots p_lp_{l+1})$ where $p_{l+1} \geq 1$ and $\gcd(q_{l+1}, p_{l+1}) = 1$. Then the {\em formal Puiseux pairs} of $\tilde \phi$ are $(q_1, p_1), \ldots, (q_{l+1},p_{l+1})$. Note that 
\begin{enumerate}
\item $\delta(x) = p_1 \cdots p_{l+1}$,
\item it is possible that $p_{l+1} = 1$ (as opposed to other $p_k$'s, which are always $\geq 2$). 
\end{enumerate}
The {\em formal characteristic exponents} of $\tilde \phi$ are $q_j/(p_1 \cdots p_j)$, $1 \leq j \leq l+1$.
\end{defn}

\begin{example} 
Let $(p,q)$ are integers such that $p >0$ and $\delta$ be the weighted degree on $\cc(x,y)$ corresponding to weights $p$ for $x$ and $q$ for $y$. Then $\tilde \phi = \xi x^{q/p}$ (i.e.\ $\phi = 0$). 
\end{example}

The rest of this section is devoted to a proof of Lemma \ref{delta-comparison} below, which is used in the proofs of Theorem \ref{cox-thm} and Theorem \ref{zariski-thm}.

\begin{defn} \label{star-defn}
Let $\phi = \sum_j a_j x^{q_j/p} \in \dpsxc$ be a degree-wise Puiseux series with polydromy order $p$ and $r$ be a multiple of $p$. Then for all $c \in \cc$ we define 
$$c \star_r \phi := \sum_j a_j c^{q_jr/p}x^{q_j/p}.$$   
\end{defn}

\begin{rem} 
Let $\phi, p$ be as in Definition \ref{star-defn}. Then
\begin{enumerate}
\item Every conjugate of $\phi$ is of the form $\zeta^k \star_p \phi$ where $0 \leq k \leq p-1$ and $\zeta$ is a primitive $p$-th root of unity.
\item Let $d$ and $e$ be positive integers, and $c \in \cc$. Then $c \star_{pde} \phi = c^e \star_{pd} \phi = c^{de} \star_p \phi$.
\end{enumerate}
\end{rem}

\begin{lemma} \label{delta-comparison}
Let $\delta$ be a semidegree with generic degree-wise Puiseux series $\tilde \phi(x,\xi) = \phi(x) + \xi x^r$. Let $\psi_1, \psi_2 \in \dpsxc$, and $g_1, g_2 \in \dpsxc[y]$ be defined as 
\begin{align*}
g_i = \prod_{\parbox{1.75cm}{\scriptsize{$\psi_{ij}$ is a con\-ju\-ga\-te of $\psi_i$}}}\mkern-27mu \left(y - \psi_{ij}(x)\right).
\end{align*}
Let $\epsilon_{ij} := \deg_x(\tilde \phi(x,\xi) - \psi_{ij}(x))$, and $\epsilon_i := \min_j \{\epsilon_{ij}\}$. If $\epsilon_1 \geq \epsilon_2$, then 
\begin{align*}
\frac{\delta(g_1)}{\deg_y(g_1)} \geq \frac{\delta(g_2)}{\deg_y(g_2)}.
\end{align*}
\end{lemma}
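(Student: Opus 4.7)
The approach starts by applying identity \eqref{puiseux-degree} to each $g_i$, which gives
$\delta(g_i) = \tilde p \sum_j \epsilon_{ij}$; since $\deg_y(g_i)$ equals the polydromy order $p_i$ of $\psi_i$ (the number of its conjugates), the claim reduces to showing that the mean value $\bar\epsilon_i := \frac{1}{p_i}\sum_j \epsilon_{ij}$ is a weakly increasing function of the minimum $\epsilon_i = \min_j \epsilon_{ij}$ as $\psi_i$ varies.

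To analyse $\bar\epsilon_i$, I would fix a conjugate $\psi_i^{(0)}$ realising $\beta_i := \epsilon_i$ and use the decomposition $\tilde\phi - \psi_i^{(j)} = (\tilde\phi - \psi_i^{(0)}) + (\psi_i^{(0)} - \psi_i^{(j)})$. Because the $x^r$-coefficient of $\tilde\phi$ is an indeterminate (and $\beta_i \geq r$ automatically), there is no unexpected cancellation, giving $\epsilon_{ij} = \max(\beta_i, \deg_x(\psi_i^{(0)} - \psi_i^{(j)}))$. I would then partition the conjugates by their ``contact level'' with $\psi_i^{(0)}$: a standard count using the cyclic action of roots of unity on the characteristic-exponent stratification shows that exactly $(p_i/n_t^{(i)})(p_t^{(i)} - 1)$ conjugates satisfy $\deg_x(\psi_i^{(0)} - \psi_i^{(j)}) = \gamma_t^{(i)}$, where $\gamma_t^{(i)}$ is the $t$-th characteristic exponent of $\psi_i$ with Puiseux-pair denominator $p_t^{(i)}$ and cumulative product $n_t^{(i)} = p_1^{(i)}\cdots p_t^{(i)}$. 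Summing and grouping according to whether $\gamma_t^{(i)} \geq \beta_i$ yields the closed form
$$\bar\epsilon_i \;=\; \beta_i \,+\, \sum_{t:\ \gamma_t^{(i)} > \beta_i}\frac{p_t^{(i)} - 1}{n_t^{(i)}}\bigl(\gamma_t^{(i)} - \beta_i\bigr).$$

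The decisive structural step is then to observe that since $\deg_x(\phi - \psi_i^{(0)}) \leq \beta_i$, the series $\psi_i^{(0)}$ and $\phi$ have identical coefficients at every exponent $> \beta_i$; consequently the characteristic exponents of $\psi_i^{(0)}$ strictly above $\beta_i$, together with their Puiseux-pair denominators, coincide with those of $\phi$ above $\beta_i$. Thus both $\bar\epsilon_1$ and $\bar\epsilon_2$ can be expressed using only $\phi$'s characteristic-exponent data $(\eta_s, P_s, N_s)$, with only the truncation level $\beta_i$ being $i$-dependent. Writing $\bar\epsilon_1 - \bar\epsilon_2$ in these terms and bounding the ``excess'' contribution from $\eta_s \in (\beta_2, \beta_1]$ by $(\beta_1 - \beta_2)\sum_{\eta_s > \beta_2}(P_s-1)/N_s$, the telescoping identity $\sum_s(P_s - 1)/N_s = 1 - 1/N_K < 1$ forces $\bar\epsilon_1 - \bar\epsilon_2 \geq (\beta_1 - \beta_2)\bigl(1 - \sum_{\eta_s > \beta_2}(P_s-1)/N_s\bigr) \geq 0$, giving the desired inequality. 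The main obstacle is the combinatorial bookkeeping: verifying the counting formula for conjugates at each contact level and the matching of Puiseux-pair data between $\psi_i^{(0)}$ and $\phi$ above $\beta_i$; once these arithmetic facts about the roots-of-unity action are in hand, the remaining manipulation is routine telescoping.
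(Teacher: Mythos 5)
Your proposal is correct, and it takes a genuinely different (though related) route from the paper's. Reducing to the inequality $\bar\epsilon_1 \geq \bar\epsilon_2$ for the average contacts $\bar\epsilon_i := \frac{1}{\deg_y g_i}\sum_j \epsilon_{ij}$ is the same first step both arguments make implicitly, since $\delta(g_i) = \delta(x)\sum_j\epsilon_{ij}$. From there the paper proceeds by directly decomposing $\phi = \phi_1 + \phi_2 + \phi_3$ (where $\phi_1$ is the common truncation with $\psi_1$ and $\phi_1+\phi_2$ the common truncation with $\psi_2$), grouping the conjugates of each $\psi_i$ into ``coarse orbits'' of size $p_1 := $ polydromy order of $\phi_1$, and showing that each coarse orbit contributes exactly $\epsilon_1 + \sum_{j=1}^{p_1-1}\deg_x(\phi_1 - \zeta_1^j\star_{p_1}\phi_1)$ to $\delta(g_1)/\delta(x)$ but at most that quantity to $\delta(g_2)/\delta(x)$ (the key inequality being that the ``inner'' terms for $\psi_2$ are bounded above by $\epsilon_1$ since all their ingredients have degree $\leq \epsilon_1$). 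Your proof instead pushes the bookkeeping all the way to a closed formula $\bar\epsilon_i = \beta_i + \sum_{\gamma_t>\beta_i}\frac{p_t-1}{n_t}(\gamma_t - \beta_i)$ --- which, once you observe that the characteristic-exponent data above $\beta_i$ can be read off from $\phi$ alone --- turns the comparison into a clean telescoping estimate using $\sum_s(P_s-1)/N_s = 1 - 1/N_K < 1$. Your route is more explicit and produces a standalone closed form for the ``average contact'' (essentially a skewness-type invariant); the paper's route is shorter because it never isolates this formula, contenting itself with the term-by-term bound on each coarse orbit. The two pieces of bookkeeping you flag --- the count $(p_i/n_t^{(i)})(p_t^{(i)}-1)$ of conjugates at contact level $\gamma_t^{(i)}$, and the fact that $\psi_i^{(0)}$ and $\phi$ share characteristic exponents and Puiseux-pair denominators strictly above $\beta_i$ --- are both correct standard facts, and the rest of your manipulation checks out (I verified the equivalence of your closed form with $\frac{\beta_i}{p_i} + \sum_t\frac{p_t-1}{n_t}\max(\beta_i,\gamma_t)$ via the telescoping identity, and the final bound $\bar\epsilon_1 - \bar\epsilon_2 \geq (\beta_1-\beta_2)/N_K \geq 0$).
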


\begin{proof}
W.l.o.g.\ (replacing $\psi_i$'s by some of their conjugates if necessary) we may assume that $\epsilon_i = \deg_x(\tilde \phi(x,\xi) - \psi_{i}(x))$ for each $i$. Then we can write:
\begin{align*}
\phi(x) &= \phi_1(x) + \phi_2(x) + \phi_3(x, \xi) \\
\psi_1(x) &= \phi_1(x) + \hat \psi_1(x)\\
\psi_2(x) &= \phi_1(x) +\phi_2(x) + \hat \psi_2(x),
\end{align*} 
where all terms of $\phi_{j+1}$ and $\hat \psi_j$ have lower degree in $x$ than each term of $\phi_{j}$ for $j = 1,2$. Let the polydromy orders of $\phi_1$, $\phi_1 + \phi_2$, $\psi_1$, $\psi_2$ be respectively $p_1$, $p_1p_2$, $p_1q_1$, and $p_1p_2q_2$. Let $\zeta_1, \zeta_2, \hat \zeta_1$, $\hat \zeta_2$ be primitive roots of unity of orders $p_1, p_1p_2, p_1q_1, p_1p_2q_2$ respectively. Then 
\begin{align*}
\delta(g_1) 
	&= \delta(x) \sum_{j=0}^{p_1q_1 -1} \deg_x\left(\tilde \phi(x,\xi) - \hat\zeta_1^j \star_{p_1q_1} \psi_1(x)\right)\\
	&= \delta(x) \sum_{j_1=0}^{q_1 -1}\sum_{j=0}^{p_1 -1} \deg_x\left(\phi_1(x) + \phi_2(x) +\xi x^r - 
			{\hat\zeta_1}^{j_1p_1 + j} \star_{p_1q_1}(\phi_1(x) + \hat \psi_1(x))\right)\\
	&= \delta(x) \sum_{j_1=0}^{q_1 -1}\sum_{j=0}^{p_1 -1} \deg_x\left(\phi_1(x) + \phi_2(x) +\xi x^r - 
			\zeta_1^j \star_{p_1}\phi_1(x) + {\hat\zeta_1}^{j_1p_1 + j} \star_{p_1q_1} \hat \psi_1(x)\right)\\
	&= \delta(x) \sum_{j_1=0}^{q_1 -1}\left(\deg_x\left(\phi_2(x) +\xi x^r -  \hat\zeta_1^{j_1p_1} \star_{p_1q_1} \psi_1(x)\right)
		+ \sum_{j=1}^{p_1 -1} \deg_x\left(\phi_1(x) - \zeta_1^j \star_{p_1}\phi_1(x) \right) \right)\\
	&= q_1\delta(x)\left(\epsilon_1 +  \sum_{j=1}^{p_1 -1} \deg_x\left(\phi_1(x) - \zeta_1^j \star_{p_1}\phi_1(x) \right) \right),\\
\delta(g_2) 
	&= \delta(x) \sum_{j=0}^{p_1p_2q_2 -1} \deg_x\left(\tilde \phi(x,\xi) - \hat\zeta_2^j \star_{p_1p_2q_2} \psi_2(x)\right)\\
	&= \delta(x) \sum_{j_2=0}^{p_2q_2 -1}\sum_{j_1=0}^{p_1 -1} \deg_x\left(\phi_1(x) + \phi_2(x) +\xi x^r - 
			{\hat\zeta_1}^{j_2p_1 + j_1} \star_{p_1p_2q_2}(\phi_1(x) + \phi_2(x) + \hat \psi_2(x))\right)\\
	&= \delta(x) \sum_{j_2=0}^{p_2q_2 -1} \left( \deg_x\left(\phi_2(x) +\xi x^r - 
			{\hat\zeta_1}^{j_2p_1} \star_{p_1p_2q_2}(\phi_2(x) + \hat \psi_2(x))\right)
			+ \sum_{j_1=1}^{p_1 -1} \deg_x\left(\phi_1(x) - \zeta_1^{j_1} \star_{p_1}\phi_1(x) \right) \right)\\
	&\leq p_2q_2\delta(x)\left(\epsilon_1 +  \sum_{j=1}^{p_1 -1} \deg_x\left(\phi_1(x) - \zeta_1^j \star_{p_1}\phi_1(x) \right) \right).
\end{align*}
It follows that $\delta(g_2)/(p_1p_2q_2) \leq \delta(g_1)/(p_1q_1)$, as required.
\end{proof}

\bibliographystyle{alpha}
\bibliography{../../utilities/bibi}
%\bibliography{bibi}

\end{document}